\documentclass[10pt,a4paper]{article}

\setcounter{tocdepth}{1}

\usepackage{amsmath, amsthm, amssymb, amsfonts}

\usepackage{pstcol}

\usepackage{subfigure}
\usepackage{graphicx,epsfig,color}
\usepackage{scrpage2}
\usepackage{exscale}
\usepackage{pstricks}

\newcommand{\Keywords}[1]{\par\noindent{\small{\em Keywords\/}: #1}}

\bibliographystyle{plain}
\oddsidemargin0.45cm \textwidth15cm \topmargin0cm \textheight22cm

\theoremstyle{plain}

\newtheorem{theorem}{Theorem}[section]

\newtheorem{lemma}[theorem]{Lemma}

\newtheorem{corollary}[theorem]{Corollary}
\newtheorem{conjecture}[theorem]{Conjecture}
\newtheorem{proposition}[theorem]{Proposition}

\theoremstyle{definition}

\newtheorem{definition}[theorem]{Definition}

\theoremstyle{remark}
\newtheorem{remark}[theorem]{Remark}

\newtheorem*{ak}{Acknowledgements}

\DeclareSymbolFont{AMSb}{U}{msb}{m}{n}
\DeclareMathSymbol{\N}{\mathalpha}{AMSb}{"4E}
\DeclareMathSymbol{\R}{\mathalpha}{AMSb}{"52}
\DeclareMathSymbol{\Z}{\mathalpha}{AMSb}{"5A}
\DeclareMathSymbol{\D}{\mathalpha}{AMSb}{"44}
\DeclareMathSymbol{\s}{\mathalpha}{AMSb}{"53}
\DeclareMathOperator{\diam}{diam}
\newcommand{\M}{M}
\newcommand{\X}{X}
\newcommand{\dx}{d_X}
\newcommand{\spt}{\mathsf{spt}}
\newcommand{\de}{d}
\newcommand{\m}{m}

\newcommand{\sk}{{\mbox{sn}_K}}
\newcommand{\ck}{{\mbox{cn}_K}}
\DeclareMathOperator{\ric}{ric}
\newcommand{\vol}{\mbox{vol}}

\DeclareMathOperator{\supp}{supp}
\begin{document}
\title{Ricci Curvature Bounds for Warped Products}

\author{{Christian Ketterer}\footnote{IAM Bonn, ketterer@iam.uni-bonn.de}}

\maketitle

\begin{abstract}
We prove generalized lower Ricci curvature bounds for warped products over complete Finsler manifolds. 
On the one hand our result covers a theorem of Bacher and Sturm concerning euclidean and spherical cones (\cite{bastco}). On the 
other hand it can be seen in analogy to a result of Bishop and Alexander in the setting of Alexandrov spaces 
with curvature bounded from below (\cite{albi}). For the proof we combine techniques developed in these papers. Because the Finslerian product 
metric can degenerate we regard a warped product as metric measure space that is in general neither a Finsler manifold nor an Alexandrov space again
but a space satisfying a curvature-dimension condition in the sense of Lott-Villani/Sturm.
\Keywords{warped product, curvature-dimension condition, Finsler manifold, Alexandrov space}
\end{abstract}
\maketitle

\tableofcontents
\section{Introduction}
By using displacement convexity of certain entropy functionals on the $L^2$-Wasserstein space 
Sturm in \cite{stugeo1, stugeo2} and Lott-Villani in \cite{lottvillani}
independently introduced a synthetic notion of generalized lower Ricci curvature bound for metric measure spaces $(X,d_X,m_X)$. 
Combined with an upper bound $N\in[1,\infty)$ for the dimension that leads to the so-called 
curvature-dimension condition $CD(K,N)$.
For complete weighted Riemannian manifolds this is equivalent to having bounded $N$-Ricci curvature in the sense of Bakry and Emery. 
The strength of this approach comes from the stability under Gromov-Hausdorff convergence and the numerous corollaries like Brunn-Minkowski inequality, 
Bishop-Gromov volume growth and eigenvalue estimates that now hold in a very general setting.

The other object we deal with is the concept of warped product $B\times_f F$ between two metric spaces $(B,d_B)$ and $(F,d_F)$ and a Lipschitz function 
$f:B\rightarrow [0,\infty)$. It is a generalization of the ordinary cartesian product. Examples are euclidean or spherical cones, 
where the first factor is $[0,\infty)$ or $[0,\pi]$, respectively, and the warping functions are $r$ or $\sin r$.
These spaces appear naturally as tangent cones with respect to other spaces. But also more complicated warped product constructions are important and can be tools 
that allow to prove results beside the warped product itself (see for example \cite{lobaem}).

The aim of this paper is to draw some connections between warped products and the curvature-dimension condition when the underlying spaces are weighted Finsler manifolds. 
More precisely, we want to deduce a curvature-dimension bound for $B\times_f F$ provided suitable conditions for $B$, 
$F$ and $f$. On the one hand, such a result would be interesting because examples that mainly inspired the definition
of curvature-dimension in the sense of Lott-Villani/Sturm came from measured Gromov-Hausdorff limits of Riemannian manifolds. On the other hand, we already have 
stability of $CD$ under tensorization with respect to the cartesian product. 
So the stability under the warped product construction would yield a new class of examples and would be another justification for the new approach. 
The main theorem is
\begin{theorem}\label{MainTheorem}
Let $B$  be a complete, $d$-dimensional space with curvature bounded from below (CBB) by $K$ in the sense of Alexandrov such that $B\backslash\partial B$ is a Riemannian manifold. 
Let $f:B\rightarrow\mathbb{R}_{\geq0}$ be $\mathcal{F}K$-concave and smooth on $B\backslash\partial B$. Assume $\partial B\subseteq f^{-1}(\left\{0\right\})$.
Let $(F,m_F)$ be a weighted, complete Finsler manifold. Let $N\geq 1$ and $K_F\in\mathbb{R}$. 
If $N=1$ and $K_F>0$, we assume that $\diam F\leq \pi/\sqrt{K_F}$. In any case $F$ satisfies $CD((N-1)K_F,N)$ where $K_F\in\mathbb{R}$ such that
\begin{itemize}
\item[1.] If $\partial B=\emptyset$, suppose $K_F\geq Kf^2$.
\item[2.] If $\partial B\neq\emptyset$, suppose $K_F\geq 0$ and $|\nabla f|_p\leq\sqrt{K_F}$ for all $p\in \partial B$.
\end{itemize}
Then the $N$-warped product $B\times^N_f F$ satisfies $CD((N+d-1)K,N+d)$.
\end{theorem}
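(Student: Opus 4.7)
The plan is to verify the $CD((N+d-1)K,N+d)$ condition by establishing the required displacement convexity of the R\'enyi entropy $S_{N+d}$ along an optimal $W_2$-geodesic between two absolutely continuous probability measures on $B\times^N_f F$. The natural reference measure on the smooth part $(B\setminus\partial B)\times F$ is $f^N\,\mathrm{dvol}_B\otimes m_F$, so densities with respect to this measure are what enter the entropy, and the dimensional exponent $N+d$ already reflects the decomposition into $d$ base directions and $N$ effective fiber directions.

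First I would decompose the transport problem. Given $\mu_0,\mu_1$ with compact support in the interior, choose an optimal coupling $\pi$ on $B\times^N_f F$ and disintegrate it along the projection to $B\times B$. The projection $\pi_B$ should itself be an optimal coupling on $B$ for a modified quadratic cost, and conditionally over a generic pair of base points the fiberwise plan is an optimal coupling on $F$ whose $F$-cost is rescaled by $f$ along the connecting base geodesic. This is the scheme Bacher and Sturm use in the conical case, and it reduces matters to a pointwise estimate along each base geodesic $\gamma$.

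Next I would compute the Jacobian of the transport and factor it as a product of three contributions: a base Jacobian controlled by the $CD(K,d)$ condition that the $d$-dimensional Alexandrov space $B$ inherits from $\mathrm{CBB}(K)$ together with the Riemannian structure on $B\setminus\partial B$, a fiber Jacobian controlled by the $CD((N-1)K_F,N)$ condition on $F$, and a warping factor $(f\circ\gamma)^N$ coming from the reference measure. The key algebraic step is a one-variable inequality combining distortion coefficients of effective dimensions $d$ and $N$ with the warping term $f^N$ to produce the distortion coefficient $\tau^{(N+d)}_K$; this is precisely where the $\mathcal{F}K$-concavity of $f$ enters, as it supplies the correct second-order comparison for $f\circ\gamma$ in terms of $K$ and the length of $\gamma$. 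The conditions $K_F\ge Kf^2$ in Case~1, or $K_F\ge 0$ together with the gradient bound on $\partial B$ in Case~2, are what is needed to align the fiber distortion with the warped-base distortion.

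The main obstacle is the degeneracy of the warped metric at $\partial B=f^{-1}(0)$, where fibers collapse to points and the Riemannian framework breaks down. Case~1 avoids this issue entirely. In Case~2 I would first prove the entropy convexity for measures compactly supported away from $\partial B$ and then approximate general measures; the hypothesis $|\nabla f|_p\le\sqrt{K_F}$ on $\partial B$ is exactly what ensures that transport trajectories approaching the boundary still respect the one-dimensional comparison for $f\circ\gamma$ in the limit. A further technical point is that $B$ is only Alexandrov rather than globally smooth, so infinitesimal computations must be carried out on $B\setminus\partial B$ and extended to the full space via density arguments and the lower semicontinuity of the R\'enyi entropy.
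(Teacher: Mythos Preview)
Your proposal has genuine gaps, and the paper takes a quite different route.

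\textbf{The disintegration/factorization idea does not work for general warped products.} You claim that the base projection $\pi_B$ of the optimal plan is optimal on $B$ for some ``modified quadratic cost'', and that the Jacobian factors into a base part controlled by $CD(K,d)$ on $B$, a fiber part, and a warping factor. But for a geodesic $\gamma=(\alpha,\beta)$ in $B\times_f F$ with nonconstant $\beta$, the base curve $\alpha$ satisfies $\nabla^B_{\dot\alpha}\dot\alpha=-\nabla\bigl(c/(2f^2)\bigr)|_\alpha$ and is \emph{not} a $B$-geodesic; moreover $\alpha$ depends on the (a priori unknown) constant $c=|\dot\beta|^2 f^4\circ\alpha$. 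So there is no clean base transport problem to project to, and no separation of the Jacobian into independent base and fiber factors. The Bacher--Sturm scheme works for cones because the radial factor is one-dimensional and the cone distance has an explicit cosine law; neither feature survives for a general Riemannian base $B$ and general $\mathcal{F}K$-concave $f$.

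\textbf{What the paper actually does.} Instead of decomposing the transport, the paper computes the $(N+d)$-Ricci tensor of the Finslerian warped product on the smooth part $\hat C=\hat B\times_{\hat f} F$ (Propositions \ref{bigformula} and \ref{bigformula2}) and shows directly that $\ric^{N+d,m_C}_{\hat C}\ge (N+d-1)K$ under the hypotheses. One then invokes the equivalence $\ric^{N,m}\ge K \Leftrightarrow CD(K,N)$ for complete Finsler manifolds (Theorem \ref{theoremsturmohta}). Two obstructions remain. First, $\hat C$ is not geodesically complete when $\partial B\neq\emptyset$; this is handled by Theorem \ref{singularitytransport}, which uses Alexandrov-space tools (gradient exponential, quasi-geodesics, a non-expanding comparison map from a model $\hat S^3_K$, uniqueness of antipodes under MCP) to show that optimal transport geodesics avoid $X=\partial B$ altogether. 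Your ``approximate and pass to the limit'' does not capture this: one needs the precise structure from Proposition \ref{singularity} that a transport geodesic through $X$ forces its fiber endpoints to be $\pi/\sqrt{K_F}$ apart, together with cyclical monotonicity, to conclude that such geodesics carry no mass. Second, the Finsler structure $\mathcal{F}_{\hat C}$ is not $C^2$ on $T\hat B\times 0_F$, so Theorem \ref{theoremsturmohta} does not apply directly. The paper resolves this by splitting the dynamical plan into $\Gamma_a$ (geodesics with constant $F$-component, handled by the fact that $(B,d_B,f^N d\mathrm{vol}_B)$ satisfies $CD((N+d-1)K,N+d)$) and $\Gamma_b$ (geodesics with $\dot\beta\neq 0$, along which the structure is smooth and Ohta's machinery applies). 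Your proposal does not address this non-smoothness at all.
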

A continuous function $f: B\rightarrow\mathbb{R}_{\geq 0}$ is said to be $\mathcal{F}K$-concave if 
its restriction to every unit-speed geodesic satisfies $u''+Ku\leq0$ in the barrier sense. 
For a Riemannian manifold and smooth $f$ that is equivalent to $\nabla^2 f(\cdot)\leq-Kf|\cdot|^2$ where $\nabla^2 f$ denotes the Hessian.
The boundary of $B$ is defined in the sense of an Alexandrov space.
The notion of a $N$-warped product is a slight generalisation of the usual warped product where $N\geq 1$ is an additional parameter that describes a dimension bound for $F$ (Definition \ref{nwarped}).
We only consider smooth ingredients, so also the measure $m_F$ is assumed to be smooth (see the remark after Definition \ref{definitionNricci}).

\begin{corollary}
Let $B$  be a complete, $d$-dimensional space with CBB by $K$ such that $B\backslash\partial B$ is a Riemannian manifold. 
Let $f:B\rightarrow\mathbb{R}_{\geq0}$ a function such that it is smooth and satisfies $\nabla^2 f=-Kf$ on $B\backslash\partial B$. Assume $\partial B\subseteq f^{-1}(\left\{0\right\})$. 
Let $(F,m_F)$ be a weighted, complete Finsler manifold. Let $N>1$. 
Then the following statements are equivalent
\begin{itemize}
\item[(i)]  
$(F,m_F)$ satisfies $CD((N-1)K_F,N)$ with $K_F\in\mathbb{R}$ such that
\begin{itemize}
\item[1.] If $\partial B=\emptyset$, suppose $K_F\geq Kf^2$.
\item[2.] If $\partial B\neq\emptyset$, suppose $K_F\geq 0$ and $|\nabla f|_p\leq\sqrt{K_F}$ for all $p\in \partial B$.
\end{itemize}
\item[(ii)] The $N$-warped product $B\times^N_f F$ satisfies $CD((N+d-1)K,N+d)$
\end{itemize}
\end{corollary}
Why could we expect such a relation? To answer this question let us consider first the special case where
$f$ does not vanish and $B$, $F$ and $B \times_f F$ are Riemannian manifolds where $\dim F=n$. The formula for the Ricci tensor 
of warped products is
\begin{equation}\label{equation}
\ric_{B\times_f F}(\tilde{X}_{(p,x)}+\tilde{V}_{(p,x)})=\ric_B(X_p)-n\textstyle{\frac{\nabla^2f(X_p)}{{f(p)}}}+\ric_F(V_x)-\left(\textstyle{\frac{\Delta f(p)}{f(p)}}+(n-1)\textstyle{\frac{|\nabla f|^2_p}{f^2(p)}}\right)|\tilde{V}_{(p,x)}|^2
\end{equation}
(see \cite{on}) where $\tilde{X}$ and $\tilde{V}$ are horizontal and vertical lifts on $B\times_f F$ of vector fields $X$ and $V$ on $B$ and $F$, respectively. For the notion of horizontal and vertical lifts we refer to \cite{on}. 
Then, if $\ric_B\geq (d-1)K$, $f$ is $\mathcal{F}K$-concave, and $|\nabla f|^2 + Kf^2\leq K_F$ everywhere in $B$, we get
\begin{equation}\label{something}
\ric_{F}(v,v)\geq(n-1)K_F|v|^2\, \Longrightarrow\,\ric_{B\times_f F}(\xi+v,\xi+v)\geq (n+d-1)K|\xi+v|^2\,
\end{equation}
for every $v\in T F$ and for every $\xi+v\in TB\times_f F=TB\oplus TF$, respectively. (The condition on $f$ will be explained in Proposition \ref{conditions}.)

But even in the smooth setting one problem still occurs. When we allow the function $f$ to vanish, and that will happen in most of the interesting cases, 
the metric tensor degenerates and the warped product under consideration is no longer a manifold. Especially, we have no
notion for the Ricci tensor at singular points. One strategy to solve this problem could be to cut the singularities and consider only what is left over. 
But in general that space neither will be complete nor strictly intrinsic. 

But the warped product together with its distance function and its volume measure is a complete and strictly intrinsic metric measure space, where the curvature-dimension condition in the sense of Lott-Villani/Sturm can be
defined without problem. So it is convenient to state a result in terms of that condition and use tools from optimal transportation theory to circumvent the problem that comes from the singularities. 
A first step in this direction was done by Bacher and Sturm in \cite{bastco} where they show that the euclidean cone over some Riemannian manifold satisfies $CD(0,N)$ if and only if 
the underlying space satisfies $CD(N-1,N)$.  Our main theorem is an analog of this in the setting of warped products and Finsler manifolds.
 
One can see that
the curvature bound that holds by formula (\ref{equation}) on the regular part passes to the whole space provided the given assumptions are fullfilled. In general that will not be true. 
For example consider $N=1$ and the euclidean cone over $F=\mathbb{R}$. Then equation (\ref{equation}) is still true where $f$ does not degenerate, but the cone does not satisfy any curvature-dimension condition (see \cite{bastco}).
The main part of the proof is to show that the set of singularity points does not affect the optimal transportation of mass and therefore, does not
affect the convexity of any entropy functional on the $L^2$-Wasserstein space (under the given assumptions). 

Although our main theorem only works when the underlying spaces are Riemannian manifolds and weighted Finsler manifolds respectively 
the statement that the transport does not see
any singularities (see Theorem \ref{singularitytransport}) holds even in the setting where one can replace Riemannian and Finsler manifolds with Alexandrov spaces and metric measure spaces respectively and, though 
we are not able to prove it yet, we conjecture an analog of our theorem in this setting (see \ref{conj}). 
We also want to mention that there is a very good compatibility of warped products with
the concept of generalized lower (and upper) bound for curvature in the sense of Alexandrov. If we assume $B$ and $F$ to be Alexandrov spaces with CBB by $K$ and $K_F$ respectively, where
 $\partial B\subset f^{-1}(0)$ and $K_F$ and $f$ as in the theorem then the corresponding warped product has CBB by $K_F$. This result was proved by Alexander and Bishop in \cite{albi} (see also Theorem \ref{albi}).

The organisation of the paper goes as follows. In the next section we give a little introduction to warped products and provide all the results that are important for us. 
We treat the general case of metric measure spaces but then
we concentrate on the more specific situation of our theorem, namely Alexandrov spaces, Riemannian manifolds and Finsler manifolds. 
The needed results of Alexander and Bishop from \cite{albi, albi0} are presented.
We also give the 
definition of the modified Bakry-Emery $N$-Ricci tensor for Finsler manifolds and the definition of curvature-dimension condition for metric measure spaces.
In the third section we prove Theorem \ref{MainTheorem}. There, we first generalize the formula of the Ricci tensor for warped products to the setting of weighted 
Riemannian manifolds and $N$-Ricci tensor and to the setting of Finsler manifolds. Then, we prove the theorem 
that states that under our assumptions the optimal transport does not see any singularities. 
We adopt the idea of Bacher and Sturm where the main difference arises in the fact that singularity sets can have dimension bigger than zero.
After the proof of the main theorem we add a self-contained presentation for the result of the existence of optimal maps in warped products that can not directly be derived from the classical result of McCann but from 
more general consideration of Villani in \cite{viltot}.
\begin{ak}
The author would like to thank Prof. Karl-Theodor Sturm for bringing this topic up to his mind and stimulating discussions and Matthias Erbar and Martin Huesmann for useful comments.
He also wants to thank Prof. Miguel Angel Javaloyes for some important remarks on the non-smoothness of Finslerian warped products, 
and Prof. Shin-ichi Ohta, whose recipe for the proof in the Finslerian case of the main theorem we follow. The author also wants to thank the anonymous referee for her/his comments and remarks
which helped to improve the revised version of this article. 
\end{ak}
\section{Preliminaries}
\subsection{Basic definitions and notations}
Throughout this paper,  $(X,d_X)$ always will denote a complete separable metric space and $m_X$ a locally finite measure  on $(X,\mathcal{B}(X))$ with full support. 
That is,  for all $x\in X$ and all sufficiently small $r>0$ the volume $m_X(B_r(x))$ of balls centered at $x$ is positive and finite. We assume that $X$ has more than one point.

The metric space $(X,d_X)$  is called a \textit{length space} or \textit{intrinsic} if $d_X(x,y)=\inf \mbox{L} (\gamma)$ for all $x,y\in X$, 
where the infimum runs over all curves $\gamma$ in $X$ connecting $x$ and $y$. $(X,d_X)$ is called a \textit{geodesic space} or \textit{strictly intrinsic} 
if every two points $x,y\in X$ are connected by a curve $\gamma$ with $d_X(x,y)=\mbox{L}(\gamma)$.
Distance minimizing curves of constant speed are called \textit{geodesics} or minimizers.
The space of all  geodesics $\gamma:[0,1]\to X$ will be denoted by $\Gamma(X):=\Gamma^{[0,1]}(X)$. $(X,d_X)$ is called \textit{non-branching} if for every tuple $(z,x_0,x_1,x_2)$ of points in $X$ for which $z$ is a midpoint of $x_0$ and $x_1$ as well as of $x_0$ and $x_2$, it follows that $x_1=x_2$.
A triple $(X,d_X,m_X)$ with $d_X$ strictly intrinsic will be called \emph{metric measure space}.

$\mathcal{P}_2(\X,\dx)$ denotes the \textit{$L^2$-Wasserstein space} of probability measures $\mu$ on $(\X,\mathcal{B}(\X))$ with finite second moments which means that $\int_\X\dx^2(x_0,x)d\mu(x)<\infty$
for some (hence all) $x_0\in \X$. The \textit{$L^2$-Wasserstein distance} $\de_{W}(\mu_0,\mu_1)$ between two probability measures
$\mu_0,\mu_1\in\mathcal{P}_2(\X,\dx)$ is defined as
$$\de_{W}(\mu_0,\mu_1)=\inf\left\{\left(\int_{\X\times \X}\dx^2(x,y)\,d\pi(x,y)\right)^{1/2}:
\text{$\pi$ coupling of $\mu_0$ and $\mu_1$}\right\}.$$
Here the infimum ranges over all \textit{couplings} of $\mu_0$ and $\mu_1$, i.e. over all probability measures on $\X\times \X$ with marginals $\mu_0$ and $\mu_1$. Equipped with this metric, $\mathcal{P}_2(\X,\dx)$ is a 
complete separable metric space.
The subspace of $m_X$-absolutely continuous measures is denoted by $\mathcal{P}_2(\X,\dx,m_X)$.

\medskip

\begin{definition}
\begin{itemize}
\item[(i)]
A subset $\Xi\subset\X\times\X$ is called \emph{$\dx^2$-cyclically monotone} if and only if for any $k\in\N$ and for any family $(x_1,y_1),\dots,(x_k,y_k)$ of points in $\Xi$ the inequality
$$\sum^k_{i=1}\dx^2(x_i,y_i)\leq\sum^k_{i=1}\dx^2(x_i,y_{i+1})$$
holds with the convention $y_{k+1}=y_1$.
\item[(ii)] Given probability measures $\mu_0,\mu_1$ on $\X$, a probability measure $\pi$ on $\X\times \X$ is called
\emph{optimal coupling} of them iff $\pi$ has marginals $\mu_0$ and $\mu_1$ and
$$\de^2_{W}(\mu_0,\mu_1)=\int_{\X\times \X}\de_X^2(x,y) \, d\pi(x,y).$$
\item[(iii)] A probability measure $\Pi$ on $\Gamma(\X)$ is called \emph{dynamical optimal transference plan} iff the probability measure $(e_0,e_1)_*\Pi$ on $\X\times \X$ is an optimal coupling of the probability measures
$(e_0)_*\Pi$ and $(e_1)_*\Pi$ on $\X$.
\end{itemize}
\end{definition}
Here and in the sequel $e_t:\Gamma(\X)\to \X$ for $t\in[0,1]$ denotes the evaluation map $\gamma\mapsto \gamma_t$.
Moreover, for each measurable map $f:\X\to \X'$ and each measure $\mu$ on $\X$ the  push forward (or image measure) of $\mu$ under $f$ will be denoted by $f_*\mu$.
\medskip
\begin{lemma} \label{optimaltransport}
\begin{itemize}
\item[(i)] For each pair $\mu_0,\mu_1\in\mathcal{P}_2(\X,\dx)$ there exists an optimal coupling $\pi$.
\item[(ii)]
The support of any  optimal coupling $\pi$  is a $\dx^2$-cyclically monotone set.
\item[(iii)] If $\X$ is geodesic then
for each pair $\mu_0,\mu_1\in\mathcal{P}_2(\X,\dx)$ there exists a dynamical optimal transference plan with given initial and terminal distribution:
$(e_0)_*\Pi=\mu_0$ and $(e_1)_*\Pi=\mu_1$.
\item[(iv)] Given any dynamical optimal transference plan $\Pi$ as above, a geodesic $(\mu_t)_{t\in[0,1]}$ in
$\mathcal{P}_2(\X,\dx)$ connecting $\mu_0$ and $\mu_1$ is given by
$$\mu_t:=(e_t)_*\Pi.$$
\end{itemize}
\end{lemma}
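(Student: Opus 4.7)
}
All four statements are standard optimal-transport facts, so the plan is to outline the standard arguments and flag where measurability/non-compactness causes the only real subtleties.

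For (i), the plan is a direct Prokhorov + lower semicontinuity argument. The set $\Pi(\mu_0,\mu_1)$ of couplings is convex, and, since each marginal $\mu_i$ is tight (as a Borel probability on a Polish space), a standard product-of-$\varepsilon$-compacts argument shows $\Pi(\mu_0,\mu_1)$ is tight in $\mathcal{P}(X\times X)$, hence relatively compact for the weak topology. It is also weakly closed, as the marginal constraint tests against $C_b$ functions. The cost $\pi\mapsto\int d_X^2\,d\pi$ is lower semicontinuous on $\mathcal{P}(X\times X)$ because $d_X^2$ is continuous and nonnegative (apply the Portmanteau theorem to the lower-semicontinuous and nonnegative integrand, approximating $d_X^2$ from below by bounded continuous functions $d_X^2\wedge n$). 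A minimizing sequence thus subconverges to a minimizer.

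For (ii), I would use the standard cyclical-rearrangement argument. Suppose, to contradict optimality, that there exist $k$ points $(x_i,y_i)_{i=1}^k\subset\supp\pi$ and $\varepsilon>0$ with $\sum d_X^2(x_i,y_{i+1})+3k\varepsilon<\sum d_X^2(x_i,y_i)$. By continuity of $d_X^2$, one can find small product neighbourhoods $U_i\times V_i\ni(x_i,y_i)$ on which the same strict inequality persists (pointwise); by definition of the support, $\pi(U_i\times V_i)>0$. Then build a competitor $\pi'$ that agrees with $\pi$ outside $\bigcup_i U_i\times V_i$, replaces the restrictions $\pi|_{U_i\times V_i}$ by their marginals glued cyclically (shift $i\mapsto i+1$), and normalise by a small common mass. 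A direct computation shows $\int d_X^2\,d\pi'<\int d_X^2\,d\pi$, contradicting (i). The only care needed is to make the replacement produce an admissible coupling with the same marginals, which is achieved by using a disintegration and swapping only the conditional marginals cyclically; this is the step where one must be careful.

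For (iii), given an optimal $\pi$ from (i), I would select for each pair $(x,y)\in X\times X$ a geodesic joining them. Consider the set $G=\{(x,y,\gamma)\in X\times X\times\Gamma(X):e_0(\gamma)=x,\,e_1(\gamma)=y,\,L(\gamma)=d_X(x,y)\}$; since $X$ is geodesic and Polish, $G$ is a Borel subset of a Polish space whose projection to $X\times X$ is surjective, and one applies a Borel selection theorem (e.g.\ Kuratowski--Ryll-Nardzewski) to produce a Borel map $s:X\times X\to\Gamma(X)$ with $s(x,y)\in G$. Setting $\Pi:=s_*\pi$ gives a measure on $\Gamma(X)$ with $(e_0,e_1)_*\Pi=\pi$, hence $\Pi$ is a dynamical optimal transference plan with the prescribed endpoints. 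The main obstacle here is verifying Borel measurability of $s$; this is where I would expect to spend most of the care.

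For (iv), fix $0\leq s<t\leq 1$ and set $\mu_t=(e_t)_*\Pi$. The coupling $(e_s,e_t)_*\Pi\in\Pi(\mu_s,\mu_t)$ gives
\[
d_W(\mu_s,\mu_t)^2\leq\int_{\Gamma(X)} d_X^2(\gamma_s,\gamma_t)\,d\Pi(\gamma)=(t-s)^2\int d_X^2(\gamma_0,\gamma_1)\,d\Pi(\gamma)=(t-s)^2 d_W(\mu_0,\mu_1)^2,
\]
using that $\Pi$-a.e.\ $\gamma$ is a constant-speed geodesic and that $(e_0,e_1)_*\Pi$ is optimal. The reverse inequality follows from the triangle inequality in $\mathcal{P}_2$: one has $d_W(\mu_0,\mu_1)\leq d_W(\mu_0,\mu_s)+d_W(\mu_s,\mu_t)+d_W(\mu_t,\mu_1)\leq(s+(t-s)+(1-t))d_W(\mu_0,\mu_1)=d_W(\mu_0,\mu_1)$, forcing equality throughout. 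Hence $(\mu_t)_{t\in[0,1]}$ is a constant-speed geodesic in $(\mathcal{P}_2(X,d_X),d_W)$.
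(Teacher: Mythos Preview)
Your proof outline is correct and follows the standard arguments one finds in the references the paper cites (Sturm's \cite{stugeo1} and Villani's \cite{viltot}). The paper itself does not give a proof of this lemma at all: it simply writes ``Proofs can be found in \cite{stugeo1}, \cite{viltot}'' and moves on, since these are well-known foundational facts about optimal transport on Polish spaces. So there is no ``paper's own proof'' to compare against; your sketch is exactly the kind of argument those references contain, and your identification of the genuine subtleties (the construction of the competitor coupling in (ii), and the measurable selection in (iii)) is accurate.
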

$\rightarrow$ Proofs can be found in \cite{stugeo1},\cite{viltot}.
\\
\\
We assume that the reader is familiar with basics in Alexandrov spaces (\cite{bbi}, \cite{bgp}, \cite{petsem}) and Finsler geometry (\cite{shen}, \cite{ohtafinsler1}, \cite{bcs}).
For a bilinear form $b:V\times V\rightarrow \mathbb{R}$ over some vector space $V$ we will sometimes use the following abbreviation $b(v,v)=:b(v)$.
\subsection{Warped products of metric spaces} 
Let $(B,d_B)$ and $(F,d_F)$ be metric spaces that are complete, locally compact and intrinsic. Let $f: B\rightarrow \mathbb{R}_{\geq0}$ be Lipschitz. 
Let us consider a continuous curve $\gamma=(\alpha,\beta):\left[a,b\right]\rightarrow B\times F$. We define the length of $\gamma$ by
\begin{equation}\label{length}
 \mbox{L}(\gamma):=\sup_T \sum_{i=0}^{n-1}\left(d_B(\alpha(t_i),\alpha(t_{i+1}))^2+f(\alpha(t_{i+1}))^2d_F(\beta(t_i),\beta(t_{i+1}))^2\right)^{\frac{1}{2}}.
\end{equation}
where the supremum is taken with respect to $\left\{(t_i)_{i=0}^n\right\}=:T\subset[a,b]$ with $a=t_0<\dots<t_n=b$.
 We call a curve $\gamma=(\alpha,\beta)$ in $B\times F$ 
admissible if $\alpha$ and $\beta$ are Lipschitz in 
$B$ and $F$ respectively. In that case
\begin{displaymath}
\mbox{L}(\gamma)=\int_0^1\sqrt{v^2(t)+(f\circ \alpha)^2(t) w^2(t)}dt.
\end{displaymath}
where $v$ and $w$ are the metric speeds of $\alpha$ and $\beta$ in $(B,d_B)$ and $(F,d_F)$ respectively. For example 
\begin{displaymath}
v(t)=\lim_{\epsilon\rightarrow 0}\frac{d_B(\alpha(t),\alpha(t+\epsilon))}{\epsilon}.
\end{displaymath}
$\mbox{L}$ is
a length-structure on the class of admissible curves.
For details see \cite{bbi} and \cite{albi0}. 
Then we can define a semi-distance between $(p,x)$ and $(q,y)$ by
\begin{displaymath}
\inf \mbox{L}(\gamma)=: \big|(p,x),(q,y)\big|\in[0,\infty)
\end{displaymath}
where the infimum is taken over all admissible curves $\gamma$ that connect $(p,x)$ and $(q,y)$.
\begin{definition}
The warped product of metric spaces $(B,d_B)$ and $(F,d_F)$ with respect to a locally Lipschitz function $f:B\rightarrow\mathbb{R}_{\geq 0}$ is given by
\begin{eqnarray*}
 \left(C:=B\times F/_{\sim},|\cdot,\cdot|\right)=:B\times_f F
\end{eqnarray*}
where the equivalence relation $\sim$ is given by
\begin{eqnarray*}
 (p,x)\sim(q,y)\Longleftrightarrow \left|(p,x),(q,y)\right|=0
\end{eqnarray*}
and the metric distance is $\textstyle{\left|\left[(p,x)\right],\left[(q,y)\right]\right|:=\left|(p,x),(q,y)\right|}$. 
\end{definition}
\begin{remark}One can see that $$C=(\hat{B}\times_{\hat{f}}F)\dot{\cup}f^{-1}(\left\{0\right\})\mbox{ where }B\backslash f^{-1}(\left\{0\right\})=:\hat{B}\mbox{ and }\hat{f}=f|_{\hat{B}}.$$ 
We will often make use of the notation $\hat{C}:=\hat{B}\times_{\hat{f}} F$.
$B\times_f F$ is intrinsic. Completness and local compactness follow from the corresponding properties
of $B$ and $F$.
It follows that $B\times_f F$ is strictly intrinsic. Especially for
every pair of points we find a geodesic between them. \\
\\
The next two theorems by Alexander and Bishop
describe the behaviour of geodesics.\end{remark}
\begin{theorem}[\mbox{\cite[Theorem 3.1]{albi0}}]\label{fundamentaltheorem}
For a minimizer $\gamma=(\alpha,\beta)$ in $B\times_f F$ with $f>0$ we have
\begin{itemize}
\item[1.] $\beta$ is pregeodesic in $F$ and has speed proportional to $f^{-2}\circ\alpha$. .
\item[2.] $\alpha$ is independent of $F$, except for the length of $\beta$. 
\item[3.] If $\beta$ is non-constant, $\gamma$ has a parametrization proportional to arclength satisfying the energy equation
$
\frac{1}{2}v^2+\frac{1}{2f^2}=E
$
almost everywhere, where $v$ is the speed of $\alpha$ and $E$ is constant.
\end{itemize}
\end{theorem}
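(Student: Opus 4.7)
I would parametrize $\gamma$ on $[0,1]$ proportional to arclength, so that $\sqrt{v^2+(f\circ\alpha)^2 w^2}$ equals the constant $L:=\mathrm{L}(\gamma)$ a.e., where $v,w$ denote the metric speeds of $\alpha$ in $B$ and $\beta$ in $F$ (both exist a.e.\ since $\gamma$ is Lipschitz). For Part~1a I argue by contradiction: suppose that on some subinterval $[a,b]$ the $F$-length $\ell:=\mathrm{L}_F(\beta|_{[a,b]})$ strictly exceeds $\tilde\ell:=d_F(\beta(a),\beta(b))$. Pick a shorter curve $\tilde\beta$ in $F$ of length $\tilde\ell$ joining these endpoints and reparametrize it on $[a,b]$ so that its $F$-speed is $(\tilde\ell/\ell)\,w(t)$ a.e.\ (admissible since $w\in L^\infty$ and $\ell>0$). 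Splicing $\tilde\beta$ into $\beta$ while keeping $\alpha$ untouched yields a competitor $\gamma'=(\alpha,\beta')$ with
\[
\mathrm{L}(\gamma')-\mathrm{L}(\gamma) \;=\; \int_a^b\!\left(\sqrt{v^2+(\tilde\ell/\ell)^2 f^2 w^2}-\sqrt{v^2+f^2w^2}\right)dt \;<\;0,
\]
because $\tilde\ell/\ell<1$, $f>0$ along $\alpha|_{[a,b]}$, and $w>0$ on a set of positive measure. This contradicts minimality, so $\beta$ is pregeodesic in $F$.

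\textbf{Speed relation and energy equation (Parts 1b and 3).} For the speed formula, I fix $\eta\in C_c^\infty(0,1)$ and consider the variation $\gamma_s:=(\alpha,\beta\circ\sigma_s)$ with $\sigma_s(t)=t+s\eta(t)$, applied to $\beta$ alone. For $|s|$ small this is an admissible competitor with the same endpoints as $\gamma$, so $\frac{d}{ds}\big|_{s=0}\mathrm{L}(\gamma_s)=0$. Differentiating under the integral sign, using the arclength identity to replace the denominator $\sqrt{v^2+f^2 w^2}$ by the constant $L$, and integrating by parts in $\eta$ yields the a.e.\ Euler--Lagrange equation
\[
f\,w' + 2\,\dot f\,w = 0,
\]
where $\dot f = \tfrac{d}{dt}(f\circ\alpha)$. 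On the set $\{w>0\}$ this integrates to $w=c\,(f\circ\alpha)^{-2}$ for a constant $c\ge 0$, proving Part~1b. Substituting back into $v^2+f^2 w^2=L^2$ gives $\tfrac12 v^2+c^2/(2f^2)=\tfrac12 L^2=:E$, which is the energy equation of Part~3 (the normalization $c=1$ in the statement corresponds to a choice of units for $\beta$).

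\textbf{Part 2 and main obstacle.} The energy equation is an autonomous second-order ODE for $\alpha$ in $B$ governed by the effective potential $c^2/(2f^2)$; it depends on $\beta$ only through the scalar $c$, which is in turn determined by the single constraint $\mathrm{L}_F(\beta)=\int_0^1 c/f^2(\alpha)\,dt$. Hence any two choices of $\beta$ with the same $F$-length produce the same $\alpha$, which is Part~2. The principal obstacle is that $B$ and $F$ are only complete, locally compact, and intrinsic metric spaces, so every step above must be justified purely from the length structure \eqref{length}: admissibility (Lipschitz regularity) of the spliced and reparametrized competitors, absolute continuity of $t\mapsto f(\alpha(t))$ needed to legitimize the integration by parts, and existence of the metric speeds $v,w$ as genuine derivatives a.e. Once these regularity issues are settled, the variational derivation proceeds essentially as in the smooth Riemannian case.
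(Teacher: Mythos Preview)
The paper does not supply a proof of this statement; it is quoted from Alexander--Bishop and used as a black box, so there is no in-paper argument to compare against. I comment on your sketch directly.

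The splicing competitor for Part~1a is correct. For Parts~1b and~3, however, the particular variation you choose creates a real problem, not just the regularity nuisance you flag at the end. With $\gamma_s=(\alpha,\beta\circ\sigma_s)$, $\sigma_s(t)=t+s\eta(t)$, the $F$-speed of the competitor is $w(\sigma_s(t))(1+s\eta'(t))$, and differentiating in $s$ produces a $w'(t)\eta(t)$ term; since $w$ is only an $L^\infty$ metric derivative, $w'$ is undefined and your Euler--Lagrange identity $fw'+2\dot f w=0$ is unjustified as written. The cure is to invoke Part~1a \emph{first}: write $\beta=\bar\beta\circ\psi$ with $\bar\beta$ a unit-speed $F$-geodesic and $\psi$ nondecreasing, $\psi'=w$, and take the outer variation $\psi_s=\psi+s\zeta$ for $\zeta\in C_c^\infty(0,1)$. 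Then the competitor speed is simply $w+s\zeta'$, no derivative of $w$ enters, and the first variation $\int_0^1\big(f(\alpha)^2 w/L\big)\zeta'\,dt=0$ gives $f(\alpha)^2 w=\text{const}$ directly in weak form.

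Your Part~2 argument also overshoots. The energy equation is a first integral, not a second-order ODE, and $B$ has no smooth structure in this theorem; moreover your coupled relation between $c$ and $\alpha$ via $\ell=\int c/f^2(\alpha)\,dt$ is circular rather than a determination of $\alpha$. The actual content of Part~2 is the reduction you have already set up: once $\beta=\bar\beta\circ\psi$, the length of $(\alpha,\beta)$ in $B\times_f F$ equals the length of $(\alpha,\psi)$ in the one-dimensional model $B\times_f[0,\ell]$ with $\ell=\mathrm{L}_F(\beta)$, so the minimization problem for $\alpha$ depends on $F$ only through the single number $\ell$.
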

%

\begin{theorem}[\mbox{\cite[Theorem 7.3]{albi}}]\label{fundamentaltheorem2}
Let $\gamma=(\alpha,\beta)$ be a minimizer in $B\times_f F$ that intersects $f^{-1}(\left\{0\right\})=:X$.
\begin{itemize}
 \item[1.] If $\gamma$ has an endpoint in $X$, then $\alpha$ is a minimizer in $B$.
\item[2.] $\beta$ is constant on each determinate subinterval.
\item[3.] $\alpha$ is independent of $F$, except for the distance between the endpoint values of $\beta$. The images of the other determinate
subintervals are arbitrary.
\end{itemize}
\end{theorem}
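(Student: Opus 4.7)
The plan is to exploit the single crucial fact underlying the warped-product construction: for every $p\in X=f^{-1}(\{0\})$ the whole fibre $\{p\}\times F$ is collapsed to a point of $B\times_f F$, so the $F$-coordinate of a minimiser may be changed at no cost whenever the base point lies in $X$. Combined with the pointwise estimate
\[
\sqrt{v^{2}(t)+(f\circ\alpha)^{2}(t)w^{2}(t)}\ \geq\ v(t),
\]
which integrates to $L(\gamma)\geq L(\alpha)$ with equality if and only if $(f\circ\alpha)w=0$ almost everywhere, this fibre collapse drives the entire argument.

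For part 1, assume $\gamma(1)\in X$. Pick a minimiser $\alpha^{*}$ in $B$ from $\alpha(0)$ to $\alpha(1)$ and set $\tilde\gamma(t):=(\alpha^{*}(t),\beta(0))$. Since $\beta$ is constant along $\tilde\gamma$, its length equals $L(\alpha^{*})$; since $f(\alpha(1))=0$, the point $(\alpha(1),\beta(0))$ is identified with $(\alpha(1),\beta(1))=\gamma(1)$ in the quotient, so $\tilde\gamma$ joins the same two endpoints as $\gamma$. Minimality of $\gamma$ then gives $L(\gamma)\leq L(\tilde\gamma)=L(\alpha^{*})$, while the pointwise estimate gives $L(\gamma)\geq L(\alpha)\geq L(\alpha^{*})$. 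Thus all three inequalities are equalities and $\alpha$ is a minimiser in $B$.

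For parts 2 and 3, any interior time $t_{0}$ with $\alpha(t_{0})\in X$ splits $\gamma$ into two sub-minimisers, each of which has an endpoint in $X$. Applying the argument of Step~1 to each half saturates the pointwise length estimate and hence forces $(f\circ\alpha)^{2}w^{2}=0$ almost everywhere on both halves. Consequently, on every determinate subinterval — i.e.\ on each connected subinterval where $f\circ\alpha>0$ — one has $w=0$, so $\beta$ is constant. Step~1 also says that on every such subinterval $\alpha$ is a minimiser in $B$, so it depends on $F$ only through its endpoint data in the collapsed fibres, i.e.\ only through the distance that $\beta$ must traverse at the $X$-crossings. On the remaining subintervals, those on which $\alpha$ stays inside $X$, the curve $\gamma$ is fibrewise collapsed and the image of $\alpha$ may be any curve in $X$ joining the required endpoints, which is exactly the content of part 3.

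The delicate step, and the main obstacle, is justifying that equality in the length inequality genuinely forces $(f\circ\alpha)w=0$ almost everywhere, rather than only $L(\gamma)=L(\alpha)$. This requires a careful treatment of restrictions: one has to check that the restriction of a minimiser between two of its points lying in $X$ is again a minimiser in $B\times_f F$, and one has to combine the equality case of the pointwise estimate with the reparametrisation result of Theorem~\ref{fundamentaltheorem} on the regular part $\hat{C}$. Once this is in place, the decomposition of $[0,1]$ into alternating determinate and $X$-valued subintervals and the application of Step~1 to each determinate piece make the rest of the argument essentially a bookkeeping exercise.
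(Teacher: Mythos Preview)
This theorem is quoted from Alexander--Bishop \cite{albi} and is not proved in the present paper, so there is no in-paper argument to compare against; your proposal has to be judged on its own.

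Your arguments for parts~1 and~2 are sound. The step you flag as ``delicate'' --- deducing $(f\circ\alpha)\,w=0$ a.e.\ from $L(\gamma)=L(\alpha)$ --- is in fact immediate: the integrand $\sqrt{v^{2}+(f\circ\alpha)^{2}w^{2}}$ dominates $v$ pointwise, and equality of the integrals forces equality of the integrands almost everywhere. No appeal to Theorem~\ref{fundamentaltheorem} is needed for this.

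Part~3, however, you have misread. The sentence ``the images of the other determinate subintervals are arbitrary'' concerns $\beta$, not $\alpha$. Recall (see the remark following the theorem) that a \emph{determinate} subinterval is precisely one on which $f\circ\alpha>0$. On each such subinterval $\beta$ is constant by part~2, and the claim is that on the \emph{interior} determinate subintervals --- those not containing an endpoint of $\gamma$ --- this constant value may be taken to be any point of $F$, since at the bounding times $f\circ\alpha$ vanishes and the fibre is collapsed. Your reformulation, that ``the image of $\alpha$ may be any curve in $X$ joining the required endpoints'', is both a misreading of the statement and false as stated: on a subinterval where $\alpha$ remains in $X$ the warped length reduces to the $B$-length of $\alpha$, so $\alpha$ there must still be length-minimising, not arbitrary. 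The correct (and short) argument for the second sentence of part~3 is: on an interior determinate subinterval $J$, replace the constant value of $\beta$ by any other point of $F$; the resulting curve has the same endpoints in $B\times_f F$ (the junctions lie in $X$) and the same length (since $w\equiv 0$ on $J$), hence is again a minimiser.
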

\begin{remark}
 A pregeodesic is a curve, whose length is distance minimizing but not necessarily of constant speed. A determinate subinterval $J$ of definition for $\beta$ is an intervall 
 where $f\circ \alpha$ does not vanish, e.g. $t\in J$ if $f\circ \alpha (t)>0$.
\end{remark}
\begin{definition}
For a metric space $(M,d)$, the \textit{$K$-cone} $(\mbox{Con}_K(M),d_{\mbox{Con}_K})$ is a metric space defined as follows:
\begin{itemize}
\item[$\diamond$]
$\mbox{Con}_K(M):=
\begin{cases} M\times \left[0,\pi/\scriptstyle{\sqrt{K}}\right]/(M\times \left\{0,\pi/\scriptstyle{\sqrt{K}}\right\})&\mbox{ if }K>0\\
						   M\times [0,\infty)/(M\times \left\{0\right\})  &\mbox{ if }K\leq 0
                                                    \end{cases}
$
\item[$\diamond$] For $(x,s),(x',t)\in \mbox{Con}_K(M)$
\begin{align*}&d_{\mbox{Con}_K}((x,s),(x',t))\\
&:=\begin{cases}\ck^{-1}\left(\ck(s)\ck(t)+K\sk(s)\sk(t)\cos\left({d}(x,x')\wedge\pi\right)\right))&\mbox{ if }K\neq 0\\
                                                  \sqrt{s^2+t^2-2st\cos\left({d}(x,x')\wedge\pi\right)}&\mbox{ if }K=0.
                                                 \end{cases}
\end{align*}
\end{itemize}
where $\sk(t)=\frac{1}{\sqrt{K}}\sin(\sqrt{K}t)$ and $\ck(t)=\cos(\sqrt{K}t)$ for $K>0$ 
and $\sk(t)=\frac{1}{\sqrt{-K}}\sinh(\sqrt{-K}t)$ and $\ck(t)=\cosh(\sqrt{-K}t)$ for $K<0$. 
\end{definition}
The $K$-cone with respect to $(M,d)$ is an intrinsic (resp. strictly intrinsic) metric spaces if and only if $(M,d)$ is intrinsic (resp. strictly intrinsic) at distances less than $\pi$
(see \cite[Theorem 3.6.17]{bbi} for $K=0$). 
If $\mbox{diam} M\leq \pi$, the $K$-cone coincides with the warped product $[0,\infty)\times_{\sk} M$.
\subsection{Alexandrov spaces} 
In this section let $B$ and $F$ be finite-dimensional Alexandrov spaces with CBB by $K$ and $K_F$ respectively. 
A continuous function $f: B\rightarrow\mathbb{R}_{\geq 0}$ is said to be $\mathcal{F}K$-concave if 
its restriction to every unit-speed geodesic $\gamma$ satisfies 
$$f\circ\gamma(t)\geq \sigma^{(1-t)}f\circ\gamma(0)+\sigma^{(t)}f\circ\gamma(\theta)\mbox{ for all }t\in[0,\theta]$$
where $\theta=\mbox{L}(\gamma)$. For the definition of $\sigma^{(t)}=\sigma^{(t)}_{1,1}(\theta)$ see the remark directly after Definition \ref{CD}. This is equivalent to that $f\circ\gamma$ is
a sub-solution of the following Dirichlet problem
\begin{align*}
\hspace{12pt}u''&=-K u \hspace{5pt}\mbox{ on }(0,\theta)\\
u(0)=f\circ&\gamma (0),\hspace{9pt}u(\theta)=f\circ\gamma (\theta).
\end{align*}
We assume that $f$ is locally Lipschitz, 
and
\begin{eqnarray}\label{conditions1}
K_F\geq Kf^2(p)&\mbox{ and }& D f_p\leq \sqrt{K_F-Kf^2(p)}\hspace{5mm}\forall p\in B.
\end{eqnarray}
$D f_p$ is the modulus of the gradient of $f$ at $p$ in the sense of Alexandrov geometry (see for example \cite{petsem}). 
In the following we will mainly stay in the setting of Riemannian manifolds, where $Df_p$ can always be replaced by $|\nabla f_p|$. 
\\
\\
Set $X:=(f|{\partial B})^{-1}(0)$. 
In \cite{albi} Alexander and Bishop prove the following the following result.
\begin{proposition}[\mbox{\cite[Proposition 3.1]{albi}}]\label{conditions}
For a $\mathcal{F}K$-concave function $f:B\rightarrow[0,\infty)$ on some Alexandrov space $B$ with CBB by $K$, the condition (\ref{conditions1}) is equivalent to
\begin{itemize}
\item[1.] If $X=\emptyset$, suppose $K_F\geq Kf^2$.
\item[2.] If $X\neq\emptyset$, suppose $K_F\geq 0$ and $D f_p\leq\sqrt{K_F}$ for all $p\in X$.
\end{itemize}
\end{proposition}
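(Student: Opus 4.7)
The plan is as follows. First, I note that the implication $(\ref{conditions1})\Rightarrow$ (1)/(2) is immediate: specialising $(\ref{conditions1})$ at a point $p\in X$ (where $f(p)=0$) gives $K_F\geq 0$ and $Df_p\leq\sqrt{K_F}$, which is (2); and the clause $K_F\geq Kf^2(p)$ in $(\ref{conditions1})$ is (1). So the content of the proposition lies in the converse direction (1)/(2) $\Rightarrow (\ref{conditions1})$.

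For the converse, I introduce the ``energy''
$$h(t):=u'(t)^2+Ku(t)^2$$
along a unit-speed geodesic $\gamma$ in $B$, with $u=f\circ\gamma$. Formally differentiating and using the $\mathcal{F}K$-concavity of $f$, which supplies the barrier-sense inequality $u''+Ku\leq 0$, one obtains $h'(t)=2\,u'(t)\bigl(u''(t)+Ku(t)\bigr)$. Hence $h$ is non-decreasing on intervals where $u'\leq 0$ and non-increasing where $u'\geq 0$, so the maximum of $h$ along $\gamma$ is attained either at an endpoint or at a local minimum of $u$ (where $u'=0$). In the Alexandrov setting this monotonicity is formulated via one-sided derivatives of the semi-concave function $u$.

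Now fix $p\in B$ with $f(p)>0$; the goal is $Df_p^2+Kf^2(p)\leq K_F$. I choose $\gamma$ to be a curve of \emph{steepest descent} of $f$ emanating from $p$, so that $u'(0)=-Df_p$. In Case 2 ($X\neq\emptyset$), the $\mathcal{F}K$-concavity together with $u\geq 0$ forces $u$ to reach $0$ in finite length at some $q\in f^{-1}(0)$; by choosing $\gamma$ appropriately (e.g.\ as a shortest geodesic from $p$ to $X$, or by showing the gradient flow cannot get trapped at an interior zero of $f$) one has $q\in X$. Then $u(T)=0$ and $|u'(T)|\leq Df_q\leq\sqrt{K_F}$ give $h(T)\leq K_F$, and the monotonicity of $h$ on $[0,T]$ yields $h(0)\leq K_F$, which is exactly $(\ref{conditions1})$ at $p$. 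In Case 1 ($X=\emptyset$ and $K_F\geq Kf^2$ on $B$), there is no boundary to reach, so I use that $h$ is non-decreasing along the descent curve; any limit point $p^*$ of the curve is a critical point of $f$, at which $h(p^*)=Kf^2(p^*)\leq K_F$ by hypothesis, whence $h(0)\leq K_F$.

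The main obstacle is executing the steepest-descent step rigorously in the Alexandrov setting, since $f$ is only $\mathcal{F}K$-concave and not assumed smooth. I plan to invoke the Perelman--Petrunin gradient flow for semi-concave functions, which furnishes a canonical descent curve along which $f$ decreases at instantaneous rate $(Df)^2$ and along which the first-variation monotonicity of $h$ persists in the one-sided sense. Justifying convergence of the flow either to a critical point of $f$ (Case 1) or to a point of $X$ (Case 2), and checking that the bound $|u'|\leq Df$ on the terminal derivative can be extracted at the limit point, is the technical crux of the argument.
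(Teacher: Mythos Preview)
The paper does not supply its own proof of this proposition; it is quoted verbatim from Alexander--Bishop \cite[Proposition~3.1]{albi}. So there is no ``paper's proof'' to compare against, only the original. That said, your key device---the energy $h(t)=u'(t)^2+Ku(t)^2$ along a unit-speed geodesic, together with the sign analysis $h'=2u'(u''+Ku)$---is exactly the mechanism Alexander--Bishop exploit, and your outline of the easy implication is correct.

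There is, however, a genuine slippage in your hard direction. You derive the monotonicity of $h$ for a unit-speed \emph{geodesic} (that is where $u''+Ku\le 0$ comes from), but then in Case~2 you propose to take $\gamma$ either as a shortest geodesic from $p$ to $X$ or as the gradient flow of $-f$. Neither matches the setup:
\begin{itemize}
\item A shortest geodesic from $p$ to $X$ is a footpoint geodesic; its initial directional derivative is generally \emph{not} $-Df_p$, so $h(0)\neq (Df_p)^2+Kf^2(p)$ and bounding $h(0)$ does not bound the quantity you want.
\item The gradient flow is not a geodesic, so the inequality $u''+Ku\le 0$---and hence your monotonicity computation---does not apply to it as written. (One can show directly that $G:=(Df)^2+Kf^2$ is nondecreasing along the descending gradient flow, since in the smooth case $\nabla G\cdot\nabla f=2\bigl(\nabla^2f(\nabla f,\nabla f)+Kf|\nabla f|^2\bigr)\le 0$; but that is a different calculation from the one you set up, and its Alexandrov-space version needs its own justification.)
\end{itemize}

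The clean fix, and what Alexander--Bishop actually do, is to take $\gamma$ to be a (quasi-)geodesic issuing from $p$ in a direction that realises $Df_p$, so that $u'(0)=-Df_p$ and $h(0)=(Df_p)^2+Kf^2(p)$ on the nose. Your monotonicity then applies verbatim. One checks that such a $\gamma$, while $u'<0$, must in finite parameter reach either a point with $u'=0$ (then $h\le Ku^2\le K_F$ in Case~1, and $\le 0\le K_F$ if $K\le 0$ in Case~2) or a point with $u=0$ (then $h\le (Df_q)^2\le K_F$ for $q\in X$ in Case~2); when $K>0$ concavity of $u$ forces $u'$ to stay negative and $u$ to hit $0$ first. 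This avoids the gradient-flow machinery entirely and makes the ``technical crux'' you flagged disappear.
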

In the proof of the previous proposition Alexander and Bishop especially deduce the following result.
\begin{proposition}[\mbox{\cite[Proof of Proposition 3.1]{albi}}]\label{boundary}
Let $f$ be $\mathcal{F}K$-concave and assume it is not identical $0$. Let $B$ be as in Proposition \ref{conditions}. 
Then $f$ is positive on non-boundary points: $f^{-1}\left(\left\{0\right\}\right)\subset\partial B$. Especially $X=f^{-1}(0)$.
\end{proposition}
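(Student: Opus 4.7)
The plan is to prove the contrapositive: if $Z := f^{-1}(\{0\})$ meets $B \setminus \partial B$, then $f \equiv 0$. Note that $Z$ is closed in $B$ by continuity of $f$. Suppose for contradiction that there is $p \in Z$ with $p \in B \setminus \partial B$.

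The first step is a one-dimensional zero-propagation observation. The $\mathcal{F}K$-concavity of $f$ means that along every unit-speed geodesic $\gamma : [0, \theta] \to B$ one has
\[
(f \circ \gamma)(t) \;\geq\; \frac{\sk(\theta - t)}{\sk(\theta)}\, (f \circ \gamma)(0) \;+\; \frac{\sk(t)}{\sk(\theta)}\, (f \circ \gamma)(\theta)
\]
for all $t \in [0, \theta]$. I choose $\theta$ small enough that $\sk$ is strictly positive on $(0, \theta]$---any $\theta > 0$ when $K \leq 0$, and $\theta < \pi/\sqrt{K}$ when $K > 0$. If $p = \gamma(t_0)$ for some interior $t_0 \in (0, \theta)$, then $0 = f(p)$ combined with $f \geq 0$ and the strict positivity of both coefficients forces $f(\gamma(0)) = f(\gamma(\theta)) = 0$. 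Hence both endpoints of every sufficiently short geodesic passing through $p$ in its interior also lie in $Z$.

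The second step uses a standard structural fact from Alexandrov geometry: at a non-boundary point $p$, for sufficiently small $\epsilon > 0$ the set of points $q \in B_\epsilon(p)$ that lie on some geodesic whose interior contains $p$ is dense in $B_\epsilon(p)$. This density is a consequence of the space of directions $\Sigma_p$ having empty boundary when $p \notin \partial B$: a dense subset of directions in $\Sigma_p$ is realized by short geodesics from $p$ that admit backward extensions through $p$, and the endpoints of such extended geodesics exhaust a dense subset of $B_\epsilon(p)$. Combined with the first step, a dense subset of $B_\epsilon(p)$ lies in $Z$, and the closedness of $Z$ yields $B_\epsilon(p) \subset Z$.

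Therefore $Z$ has non-empty interior in $B$ at every non-boundary point where it meets. Any topological boundary point $q$ of $Z$ in $B$ must then lie in $\partial B$: otherwise the previous step applied at $q \in Z$ would place a full ball neighborhood of $q$ inside $Z$, contradicting $q$ being on the boundary of $Z$. Using the standard connectedness of $B \setminus \partial B$ for a finite-dimensional Alexandrov space and the resulting fact that $Z \cap (B \setminus \partial B)$ is both open and closed in $B \setminus \partial B$, we conclude that $Z \supset B \setminus \partial B$, and hence $Z = B$ by density and closedness, contradicting $f \not\equiv 0$. The main obstacle is establishing the density-of-extendable-geodesics fact invoked in the second step; the rest is an elementary manipulation of the $\sk$-concavity inequality together with standard point-set topology.
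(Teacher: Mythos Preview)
Your Step 1 is correct. The genuine gap is Step 2: the density-of-extendable-geodesics claim is \emph{false} in Alexandrov spaces. Take $B$ to be the Euclidean cone $C(S^1_\ell)$ over a circle of length $\ell<2\pi$; it has CBB by $0$ and empty boundary, so the apex $p$ is a non-boundary point. But $\Sigma_p=S^1_\ell$ has diameter $\ell/2<\pi$, so no pair of directions makes angle $\pi$, and therefore \emph{no} geodesic contains $p$ in its interior. Your density set at $p$ is empty, not dense. The condition $\partial\Sigma_p=\emptyset$ simply does not force antipodal directions to exist; you have conflated ``no boundary'' with ``diameter $\pi$''. So the obstacle you flag at the end is not a missing verification but a false statement.

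The standard repair replaces geodesic extension by quasigeodesic extension (Perelman--Petrunin \cite{pepe}, cf.\ \cite{petsem}): every geodesic from an arbitrary point $q$ to $p$ prolongs past $p$ as a unit-speed quasigeodesic, and $\mathcal{F}K$-concave functions remain $\mathcal{F}K$-concave along quasigeodesics. Now $p$ lies in the interior of this extended curve, your Step-1 comparison applies verbatim, and $f(q)=0$ follows directly for every $q\in B$. This dispenses with the density claim, the open-ball step, and the connectedness argument all at once. The paper under review does not supply its own proof here but defers to \cite[Proof of Proposition 3.1]{albi}, whose argument proceeds along these lines.
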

\begin{proposition}[\mbox{\cite[Proposition 7.2]{albi}}]\label{singularity}
Let $f:B\rightarrow \mathbb{R}_{\geq 0}$ and $B$ as in the previous proposition. Suppose $X=f^{-1}(\left\{0\right\})\neq\emptyset$ and $K_F\geq 0\mbox{ and }Df_p\leq\sqrt{K_F}\mbox{ for all }p\in X.$ Then we have:
Any minimizer in $B\times_f F$ joining two points not in $X$, and intersecting $X$, consists of two horizontal segments whose projections to
$F$ are $\pi/\sqrt{K_{F}}$ apart, joined by a point in $X$.
\end{proposition}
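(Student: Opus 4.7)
The approach is to combine the structural description of minimizers in warped products provided by Theorem~\ref{fundamentaltheorem2} with a tangent-cone comparison at the point where $\gamma$ meets $X$, closing the estimate via the Bonnet--Myers theorem on $F$. Write $\gamma=(\alpha,\beta)$ and $T:=\{t\in[0,1]:\alpha(t)\in X\}$, a nonempty closed subset of $[0,1]$ contained in $(0,1)$ by the hypotheses. Set $t_\pm=\min T,\max T$ and $q^\pm=\alpha(t_\pm)\in X$. The restrictions $\gamma|_{[0,t_-]}$ and $\gamma|_{[t_+,1]}$ have endpoints in $X$, so Theorem~\ref{fundamentaltheorem2} tells us their $\alpha$-components are $B$-minimizers and $\beta$ equals $x_0$ on the first, $x_1$ on the second. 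On $T$, $f\circ\alpha\equiv0$, so $\beta$ does not contribute to length there; competing $\gamma$ with the path that replaces $\gamma|_T$ by a $B$-geodesic inside $X$ from $q^-$ to $q^+$ forces $\alpha|_T$ itself to be $B$-minimizing, and using that $\{p\}\times F$ collapses to a single point for every $p\in X$ under the equivalence $\sim$, we may shrink $\alpha|_T$ to the single crossing point $q=\alpha(t^*)$ for any chosen $t^*\in T$ without changing the length. So after this reduction $T=\{t^*\}$ and $L(\gamma)=d_B(p_0,q)+d_B(q,p_1)$. The upper bound $d_F(x_0,x_1)\leq\pi/\sqrt{K_F}$ now follows from Bonnet--Myers applied to $F$, which has curvature $\geq K_F>0$ in the relevant regime.

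The heart of the argument is the matching lower bound $d_F(x_0,x_1)\geq\pi/\sqrt{K_F}$. Set $c:=Df_q\leq\sqrt{K_F}$. Under pointed rescaling at $q$, the warping function linearizes ($f(p)=c\,d_B(q,p)+o(d_B(q,p))$ along its gradient direction, by definition of the Alexandrov modulus), so the tangent cone of $B\times_f F$ at $q$ splits off a Euclidean-cone factor of the form $\mathrm{Con}_0\bigl((F,c\cdot d_F)\bigr)$, whose link is $F$ rescaled by $c$. In this Euclidean cone, two points at equal radius $r>0$ with $F$-projections $x_0,x_1$ are joined by a minimizer through the apex if and only if $c\cdot d_F(x_0,x_1)\geq\pi$; in the borderline case the cone distance is $2r$, while for smaller $F$-separations the straight-line cone distance $2r\sin(c\,d_F/2)$ is strictly shorter. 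Since the blow-up limit of $\gamma$ at $q$ is a minimizer in the tangent cone that passes through the apex, we must have $c\cdot d_F(x_0,x_1)\geq\pi$, hence $d_F(x_0,x_1)\geq\pi/c\geq\pi/\sqrt{K_F}$; combined with the upper bound this yields equality.

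The main obstacle is justifying the tangent-cone splitting and transferring the through-apex criterion to $B\times_f F$ rigorously in the Alexandrov setting. This requires identifying the tangent cone of $B$ at the boundary point $q$ (via the CBB property together with $q\in\partial B$) and controlling $f$ by its linearization uniformly near $q$ using $\mathcal{F}K$-concavity. A more elementary variant of the same argument competes $\gamma$ with a one-parameter family of paths of the form $(p_0,x_0)\to(q_\delta^-,x_0)\to(q_\delta^+,x_1)\to(p_1,x_1)$, where $q_\delta^\pm\in\hat{B}$ lie at height $f=c\delta$ near $q$, uses the cone formula for the central arc, and passes to the limit $\delta\to0$ to extract the same lower bound; this avoids an explicit tangent cone but still hinges on the same local analysis of the warped product near $X$.
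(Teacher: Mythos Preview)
Your reduction to two horizontal segments meeting at a single point of $X$ is fine, and the upper bound $d_F(x_0,x_1)\le\pi/\sqrt{K_F}$ follows from the standing assumption in that section that $F$ has CBB by $K_F$ (Bonnet--Myers), so that part is correct.

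The genuine gap is in the lower bound. Your tangent-cone argument is not executable as written, for two reasons. First, the tangent cone of $B\times_f F$ at $q\in X$ is \emph{not} $\mathrm{Con}_0\bigl((F,c\cdot d_F)\bigr)$: the linearization of $f$ at $q$ depends on the direction in $\Sigma_q^B$ (it equals the directional derivative, which ranges from $0$ tangentially to $\partial B$ up to $c=Df_q$ in the gradient direction), so the link is the warped product $\Sigma_q^B\times_{D_{\bullet}f}F$, not simply $(F,c\cdot d_F)$. The ``through the apex'' criterion then controls the angle in this link, which involves both $d_F(x_0,x_1)$ and the incoming/outgoing $B$-directions $\sigma_0,\sigma_1$; extracting the clean inequality $d_F(x_0,x_1)\ge\pi/\sqrt{K_F}$ from this still requires exactly the comparison you are trying to avoid. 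Second, invoking tangent cones and blow-up limits of minimizers presupposes that $B\times_f F$ is itself an Alexandrov space---but that is the content of Theorem~\ref{albi}, and in \cite{albi} Proposition~7.2 is an ingredient in the proof of that theorem, so this is circular. Your ``elementary variant'' has the same defect: the ``cone formula for the central arc'' holds in the model, not in $B\times_f F$, and you never explain how to bound the length of the central arc inside the actual warped product.

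The argument in \cite{albi} (which is what the paper cites; see also the remark near the end of Section~3 here) proceeds differently. One builds a \emph{nonexpanding} map $\Psi:\hat S_K^2\times_\Phi[0,\pi]\to B\times_f[0,\pi]$, where $\Phi(\varphi,\vartheta)=\mathrm{sn}_K(\varphi)\sin\vartheta$, by composing the gradient exponential $\exp_q:C_K(\Sigma_q)\to B$ with an isometric embedding of a $K$-cone over a geodesic arc in $\Sigma_q$ and the identity on $[0,\pi]$. The contraction property follows from $\mathcal{F}K$-concavity of $f$ together with $Df_q\le\sqrt{K_F}$ (after normalizing $K_F=1$), and $\Psi$ is an isometry along cone radii that correspond to $B$-minimizers. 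In the model $\hat S_K^3$ one knows explicitly when minimizers pass through the singular locus; pushing competitors forward under $\Psi$ (which only shortens them) then shows that if $d_F(x_0,x_1)<\pi/\sqrt{K_F}$ there is a strictly shorter path in $B\times_f F$ avoiding $X$, contradicting minimality of $\gamma$. This comparison map is the missing tool in your sketch: it is what converts the cone heuristics into a rigorous inequality without assuming anything about the curvature of $B\times_f F$.
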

The main theorem of Alexander and Bishop concerning warped products is:
\begin{theorem}[\mbox{\cite[Theorem 1.2]{albi}}]\label{albi}
 Let $B$ and $F$  be complete, finite-dimensional spaces with CBB by $K$ and $K_F$ respectively. 
Let $f:B\rightarrow\mathbb{R}_{\geq0}$ be an $\mathcal{F}K$-concave, 
locally Lipschitz function satisfying the boundary condition ($\dagger$). Set $X=f^{-1}(\left\{0\right\})\subset\partial B$.
\begin{itemize}
 \item[1.] If $X=\emptyset$, suppose $K_F\geq Kf^2$.
\item[2.] If $X\neq\emptyset$, suppose $K_F\geq 0$ and $Df_p\leq\sqrt{K_F}$ for all $p\in X$.
\end{itemize}
Then the warped product $B\times_f F$ has CBB by $K$. 

($\dagger$) If $B^{\dagger}$ is the result of gluing two copies of $B$ on the closure of the set of boundary points where $f$ is 
nonvanishing, and $f^{\dagger}: B^{\dagger}\rightarrow \mathbb{R}_{\geq 0}$ is the tautological extension of $f$, 
then $B^{\dagger}$ has CBB by $K$ and $f^{\dagger}$ is $\mathcal{F}K$-concave.
\end{theorem}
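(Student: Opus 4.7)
The plan is to verify a Toponogov-type triangle comparison for $C = B \times_f F$: every geodesic triangle should be thinner than its $K$-model comparison triangle. I would stratify the argument according to whether the triangle lies entirely in the regular part $\hat{C} = \hat{B} \times_{\hat{f}} F$, crosses the singular set $X = f^{-1}(\{0\})$, or has vertices on $\partial B$, and in each case use Theorems \ref{fundamentaltheorem} and \ref{fundamentaltheorem2} to pin down the geodesic structure.

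For the interior regular case, Theorem \ref{fundamentaltheorem} gives geodesics the product-type form with energy conservation $\tfrac{1}{2}v^2 + \tfrac{1}{2f^2} = E$. I would reduce the triangle comparison to a one-dimensional Jacobi-type ODE comparison along the base geodesic $\alpha$. The $\mathcal{F}K$-concavity of $f$ gives $f'' + Kf \leq 0$ along $\alpha$ in the barrier sense, and the equivalent condition $K_F \geq Kf^2 + (Df)^2$ provided by Proposition \ref{conditions} controls the transverse contribution from $F$. The heart of the argument is to develop a hinge in $\hat{C}$ into a hinge in a two-dimensional model warped product of constant curvature $K$, and to combine the CBB-by-$K$ property of $B$, the CBB-by-$K_F$ property of $F$, and the barrier inequality for $f$ to obtain the model-space distance estimate; this is the synthetic counterpart of the infinitesimal Riemannian curvature identity (\ref{equation}).

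For triangles crossing $X$, Proposition \ref{singularity} is decisive: any such minimizer between points off $X$ consists of two horizontal segments meeting at a point of $X$ whose $F$-projections lie exactly $\pi/\sqrt{K_F}$ apart. This rigid structure forces the comparison inequality to reduce to one in $B$, which already has CBB by $K$; a continuity/approximation argument then handles triangles with vertices in $X$ itself. Boundary vertices in $\partial B \setminus X$ are absorbed through the doubling hypothesis ($\dagger$), under which $C$ embeds isometrically into $B^\dagger \times_{f^\dagger} F$, an interior-type situation to which the earlier analysis applies. The main obstacle lies in the regular interior step: since $B$ need not be smooth, identity (\ref{equation}) is only heuristic, and one must replace it by the synthetic toolkit of Alexandrov geometry---perpendicular foot constructions, first variation, and the gradient-modulus $Df_p$---and then convert the infinitesimal second-variation inequality into a global triangle comparison via Sturm comparison for the Jacobi equation $J'' + KJ \leq 0$. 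This integration from the infinitesimal to the global is the technical crux of the argument.
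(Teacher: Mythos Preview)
This theorem is not proved in the present paper at all: it is quoted verbatim from Alexander and Bishop \cite[Theorem~1.2]{albi} and stated without proof, as background for the main result. So there is no ``paper's own proof'' to compare your proposal against.

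That said, your outline is broadly in the spirit of what Alexander and Bishop actually do in \cite{albi}. The stratification into regular/singular parts, the use of the geodesic structure theorems, the reduction to model warped products, and the handling of the boundary via the doubling hypothesis $(\dagger)$ are all ingredients of their argument. Where your sketch is vaguest is exactly where the real work lies: the ``develop a hinge in $\hat{C}$ into a hinge in a two-dimensional model warped product'' step is not a single comparison but an elaborate construction (Alexander and Bishop build nonexpanding maps from model-space sectors into $B\times_f F$ using the gradient exponential map and the $\mathcal{F}K$-concavity of $f$; a piece of this machinery is reproduced in the present paper in the proof of Theorem~\ref{singularitytransport}). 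Your phrase ``convert the infinitesimal second-variation inequality into a global triangle comparison via Sturm comparison'' understates this: in the Alexandrov setting one does not have a second variation formula to integrate, and the argument proceeds instead through these explicit nonexpanding developments and a globalization theorem. If you want to flesh this out, you should consult \cite{albi} directly rather than the present paper.
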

From now on we assume that $\partial B\subset f^{-1}(\left\{0\right\})$, so the boundary condition ($\dagger$) does not play a role and $X= f^{-1}(\left\{0\right\})=\partial B$.
\subsection{Warped products of Riemannian manifolds} 
We assume additionally that $B\backslash f^{-1}\left(\left\{0\right\}\right)=\hat{B}$ and $F$ are Riemannian manifolds with dimension $d$ and $n$ respectively. 
The Riemannian warped product with respect to $\hat{f}=f|_{\hat{B}}$ is defined in the following way:
\begin{displaymath}
 \hat{C}:=\hat{B}\times_{\hat{f}} F:=(\hat{B}\times F, g).
\end{displaymath}
The Riemannian metric $g$ is given by
\begin{displaymath}
 g:=(\pi_B)^*g_B+(f\circ\pi_B)^2(\pi_F)^*g_F
\end{displaymath}
where $g_B$ and $g_F$ are the Riemannian metrics of $B$ and $F$ respectively. It is clear that $g$ is well-defined and Riemannian. 
The length of a Lipschitz-continuous curve $\gamma=(\alpha,\beta)$ in $\hat{C}$ with respect to the metric $g$ is given by
\begin{displaymath}
\mbox{L}(\gamma)=\int_0^1\sqrt{g_B(\dot{\alpha}(t),\dot{\alpha}(t))+f^2\circ \alpha(t) g_F(\dot{\beta}(t),\dot{\beta}(t))}dt
\end{displaymath}
So the Riemannian distance on $\hat{C}$ is defined by
\begin{displaymath}
 |(p,x),(q,y)\hat{|}=\inf \mbox{L}(\gamma)
\end{displaymath}
where the infimum is taken over all Lipschitz curves that are joining $(p,x)$ and $(q,y)$ in $\hat{C}$. 
It is easy to see that the Riemannian warped product $\hat{C}$ as metric space isometrically embeds in the metric space warped product $C$ and the metrics coincide on $C\backslash f^{-1}(\left\{0\right\})$.
The next proposition is the smooth analogue of Theorem \ref{fundamentaltheorem} and can be found in \cite{on}.
\begin{proposition}
$\gamma=(\alpha,\beta)$ is a geodesic in $\hat{B}\times_{\hat{f}}F$ in the sense that $\nabla_{\dot{\gamma}}\dot{\gamma}=0$ if and only if 
\begin{itemize}
 \item[]\hspace{10pt} (1)\hspace{6pt}$\nabla^B_{\dot{\alpha}}\dot{\alpha}=|\dot{\beta}|^2(f\circ\alpha)\nabla f|_\alpha$ in $B$ \hspace{20pt}(2)\hspace{6pt}$\nabla^F_{\dot{\beta}}\dot{\beta}=\frac{-2}{f\circ\alpha}(f\circ\alpha)'\dot{\beta}$ in $F$.
\end{itemize}
\end{proposition}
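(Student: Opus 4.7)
The proof will proceed by explicit computation of $\nabla_{\dot\gamma}\dot\gamma$ using the Koszul formula applied to the warped product metric $g = \pi_B^*g_B + (f\circ\pi_B)^2\pi_F^*g_F$. The natural setup is to work with horizontal and vertical lifts of vector fields, writing $\dot\gamma = \widetilde{\dot\alpha} + \widetilde{\dot\beta}$ where horizontal vectors are tangent to the $B$-fibers (with $\pi_F$-component zero) and vertical ones are tangent to the $F$-fibers. Since the $F$-part of $g$ carries the conformal factor $f^2$, while the $B$-part does not, one must expect a non-trivial mixing of the two connections.

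First I would establish the three basic connection identities for horizontal $\tilde X,\tilde Y$ and vertical $\tilde V,\tilde W$ on $\hat C$:
\begin{align*}
\nabla_{\tilde X}\tilde Y &= \widetilde{\nabla^B_X Y},\\
\nabla_{\tilde X}\tilde V \;=\; \nabla_{\tilde V}\tilde X &= \frac{X(f)}{f}\,\tilde V,\\
\nabla_{\tilde V}\tilde W &= \widetilde{\nabla^F_V W} - \frac{g_F(V,W)}{f}\,f\cdot\widetilde{\nabla f},
\end{align*}
where $\widetilde{\nabla f}$ denotes the horizontal lift of $\nabla^B f$. Each of these is verified by Koszul's formula $2g(\nabla_A B,C) = A\,g(B,C) + B\,g(A,C) - C\,g(A,B) + g([A,B],C) - g([A,C],B) - g([B,C],A)$, using that horizontal and vertical lifts have vanishing mutual Lie brackets and that $g(\tilde V,\tilde W) = f^2\,(g_F(V,W)\circ\pi_F)$. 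The only derivatives of $f^2$ that survive are those in horizontal directions, and they produce exactly the factor $X(f)/f$ and the term with $\nabla f$.

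Then I would apply these formulas to $\dot\gamma$. Writing $\nabla_{\dot\gamma}\dot\gamma = \nabla_{\widetilde{\dot\alpha}+\widetilde{\dot\beta}}(\widetilde{\dot\alpha}+\widetilde{\dot\beta})$ and expanding into four terms, the horizontal part collects $\widetilde{\nabla^B_{\dot\alpha}\dot\alpha}$ from the first term and $-\frac{1}{f}g_F(\dot\beta,\dot\beta)\,f\cdot\widetilde{\nabla f}|_\alpha = -|\dot\beta|^2 f(\alpha)\,\widetilde{\nabla f}|_\alpha$ from the fourth; the vertical part collects $\widetilde{\nabla^F_{\dot\beta}\dot\beta}$ plus two cross terms each equal to $\frac{\dot\alpha(f)}{f}\widetilde{\dot\beta} = \frac{(f\circ\alpha)'}{f\circ\alpha}\widetilde{\dot\beta}$. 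Setting the horizontal and vertical components separately to zero gives exactly (1) and (2). The equivalence is immediate from this decomposition, since horizontal and vertical subbundles are $g$-orthogonal.

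The main technical obstacle is bookkeeping rather than any deep idea: one must be careful that $|\dot\beta|^2$ in (1) denotes the $g_F$-norm (independent of $f$), that $\dot\alpha(f) = (f\circ\alpha)'$, and that cross terms arise twice in the expansion, yielding the factor $2$ that produces the $-2$ in (2). Once the connection identities above are in place the statement follows by inspection, and this argument is standard (cf.\ \cite{on}), so I would present it compactly.
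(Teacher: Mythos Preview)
Your approach is exactly the standard one from \cite{on}, which is precisely what the paper does: it gives no independent proof of this proposition but simply cites O'Neill, and the three connection identities you use are stated verbatim as the proposition immediately following in the paper. So your route coincides with the paper's.

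One small slip to fix before submitting: in your third identity you wrote
\[
\nabla_{\tilde V}\tilde W = \widetilde{\nabla^F_V W} - \frac{g_F(V,W)}{f}\, f\,\widetilde{\nabla f},
\]
whose coefficient simplifies to $-g_F(V,W)$, whereas the correct coefficient is $-f\,g_F(V,W)$ (equivalently $-\langle\tilde V,\tilde W\rangle/f$ with the warped metric). You then write the intermediate horizontal term as $-\tfrac{1}{f}g_F(\dot\beta,\dot\beta)\,f\,\widetilde{\nabla f}|_\alpha = -|\dot\beta|^2 f(\alpha)\,\widetilde{\nabla f}|_\alpha$, which is an inconsistent equality but lands on the right expression. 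The final conclusions (1) and (2) are correct; just clean up that factor of $f$ in the identity and the subsequent line so the algebra is internally consistent.
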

\begin{remark} $\nabla^B$ and $\nabla^F$ denote the Levi-Civita connections of $B$ and $F$ respectively.
Because of Theorem \ref{fundamentaltheorem} we know that $|\dot{\beta}|^2f^4\circ\alpha=c$ where $c$ is constant. Then (1) becomes
\begin{equation}\label{ODE}
\nabla_{\dot{\alpha}}\dot{\alpha}=-\nabla \textstyle{\frac{c}{2f^2}}|_{\alpha}.
\end{equation}
If $\beta$ is constant, then $c=0$ and $\alpha$ is a geodesic in $B$. 
(\ref{ODE}) also holds for a metric warped product as long as $B$ is Riemannian.
\end{remark}
It is possible to calculate the Ricci tensor of $\hat{B}\times_{\scriptscriptstyle{\hat{f}}} F$ explicitly. Let $\xi+v\in T\hat{C}_{(p,x)}=TB_p\oplus TF_x$ be arbitrary and let $\tilde{X}$ and $\tilde{V}$ be horizontal and vertical lifts 
of vector fields $X$ and $V$ on $\hat{B}$ and $F$ respectively such that $\tilde{X}_{(p,x)}+\tilde{V}_{(p,x)}=\xi+v$. For the notion of horizontal and vertical lifts we refer to \cite{on} and we will use it without further comment. 
Then the formula is
\begin{eqnarray}\label{oneill}
\ric_{\hat{C}}(\xi+v)&=&\ric_{\hat{C}}(\tilde{X}_{(r,x)}+\tilde{V}_{(r,x)})\nonumber\\
&=&\ric_B(X_p)-n\frac{\nabla^2f_p(X_p)}{{f(p)}}+\ric_F(V_x)-\left(\frac{\Delta f(p)}{f(p)}+(n-1)\frac{|\nabla f|^2_p}{f^2(p)}\right)|\tilde{V}_{(p,x)}|^2
\end{eqnarray}
and can be found in \cite[chapter 7, 43 corollary]{on}. Clearly this representation is independent from the vector fields $X$ and $V$.
We remark that $|V|^2=g_F(V,V)$ and $|\tilde{V}|^2=g_{\tilde{C}}(\tilde{V},\tilde{V})$. 
$\nabla^2 f$ denotes the Hessian of $f$ which can be defined as follows. For $v,w\in TM_p$ choose vector fields $X$ and $Y$
such that $X_p=v$ and $Y_p=w$. Then
\begin{eqnarray}\label{lastformula}
 \nabla^2f(v,w)=X_pYf-(\nabla_XY)_pf
\end{eqnarray}
where $\nabla$ denotes here the Levi-Civita-conection of $g$. We declare $\nabla^2f(v)=\nabla^2f(v,v)$. 
Because of (\ref{lastformula}) we will always choose vector fields in the way we did above to do calculation. The results will be independent from this choice. In this smooth setting 
 $\mathcal{F}K$-concavity for a smooth $f$ becomes
$$
\nabla^2 f(v)\leq -fK|v|^2
\mbox{ for any }v\in TB.$$
If we have $\hat{B}$ with $\ric_{\hat{B}}\geq (d-1)K$ and $f$ $\mathcal{F}K$-concave satisfying the condition (\ref{conditions1}) or equivalently the 
conditions in Proposition \ref{conditions}, then
\begin{displaymath}
\ric_{F}(v)\geq(n-1)K_F|v|^2\hspace{5pt}\forall v\in TF\,\Rightarrow\,\ric_{\tilde{C}}(w)\geq (n+d-1)K|\xi+v|^2\hspace{5pt}\forall(\xi+v)\in T\hat{C}.
\end{displaymath}

\subsection{Finsler manifolds}\label{FinslerManifolds}
\begin{definition}\label{finslermanifold}
A $C^{\infty}$-Finsler structure on a $C^{\infty}$-manifold $M$ is a function $\mathcal{F}:TM\rightarrow[0,\infty)$ satisfying the following conditions:
\begin{itemize}
 \item[(1)] (Regularity) $\mathcal{F}$ is $C^{\infty}$ on $TM\backslash 0_M$, where $0_M:M\rightarrow TM$ with $0_M|_p=0\in TM_p$ denotes the zero section of $TM$.
\item[(2)] (Positive homogeneity) For any $v\in TM$ and positive number $\lambda>0$ we have $\mathcal{F}(\lambda v)=\lambda\mathcal{F}(v)$.
\item[(3)] (Strong convexity) Given local coordinates $(x^i,v^i)^n_{i=1}$ on $\pi^{-1}(U)\subset TM$ for $U\subset M$, then
\begin{equation}\label{fins}
(g_{i,j}(v)):=\left(\frac{1}{2}\frac{\partial^2(\mathcal{F}^2)}{\partial v^i\partial v^j}(v)\right)
\end{equation}
is positive-definite at every $v\in \pi^{-1}(U)\backslash 0$.
\end{itemize}
\end{definition}
We call $(g_{i,j})_{1\leq i,j\leq n}$ fundamental tensor and $(M,\mathcal{F})$ a Finsler manifold. $(g_{i,j})_{i,j}$ can be interpreted as Riemannian metric on the vector bundle 
$$\bigcup_{v\in TM\backslash 0_M}TM_{\pi(v)}\rightarrow TM$$
that associates to every $v_p\in TM_p$ again a copy of $TM_p$ itself. An important property of the fundamental tensor for us is its invariance under vertical rescaling:
$$g_{i,j}(v)=g_{i,j}(\lambda v)\mbox{ for every }\lambda> 0.$$
The Finsler structure induces a distance that makes the Finsler manifold a metric space except for the symmetry of the distance. 
Because we only consider symmetric metrics, we additionally assume
\begin{itemize}
 \item[(4)] (Symmetry) $\mathcal{F}(v)=\mathcal{F}(-v)$.
\end{itemize}
The proof of Theorem \ref{MainTheorem} relies on Proposition \ref{bigformula2} where the symmetry of the Finsler metric does not play a role.  
Thus we expect that in the non-symmetric case no further difficulties arise and an analogue of the main result holds true. 

We need notions of curvature for a Finsler manifold $(M,\mathcal{F})$ but we want to avoid a lengthy introduction. So we follow the lines indicated in chapter 6 in \cite{shen}. Fix a unit vector $v\in TM_p$ and extend it to a 
$C^{\infty}$-vector field $V$ on a neighborhood $U$ of $p$ such that $V_p=v$ and every integral curve of $V$ is a geodesic. That is always possible and we call such a vector field geodesic. Because of the strong convexity property the vector field $V$ 
induces a Riemannian structure on $U$ by
$$g^V_p:=\sum_{i,j=1}^{n}(g_{i,j})(V_p)dx^i_p\otimes dx^j_p\mbox{ for all }p\in U.$$
The \textit{flag curvature} $\mathcal{K}(v,w)$ of $v$ and a linearly independent vector $w\in TM_p$ is the sectional curvature of $v$ and $w$ with respect to $g^V$. Similar the \textit{Finsler-Ricci curvature} $\mbox{ric}(v)$ of $v$ is 
the Ricci curvature of $v$ with respect to $g^V$. These definitions are independent of the choice of the geodesic vector field $V$ (see \cite{ohtafinsler1} and \cite{shen}).
\begin{remark}\label{veryimportantremark}
Although we assume the Finsler structure $\mathcal{F}$ to be $C^{\infty}$-smooth (what we will call just smooth) outside the zero section, the lack of regularity at $0_M$ is worse than one would expect. 
Namely 
$\mathcal{F}^2$ is $C^2$ on $TM$ if and only if $\mathcal{F}$ is Riemannian. Otherwise we only get a regularity of order $C^{1+\alpha}$ for some $0<\alpha<1$. 
(For the statement that we have $C^2$ if and only if we are in a smooth Riemannian setting, see Proposition 11.3.3 in \cite{shen}.)
This fact has important consequences for warped products in the setting of Finsler manifolds. 
\end{remark}
Now we want to define a warped product between Finsler manifolds $(B,\mathcal{F}_B)$ and $(F,\mathcal{F}_F)$ with respect to a smooth function $f:B\rightarrow[0,\infty)$ 
and exactly like in the Riemannian case we can define a warped product Finsler structure explicitly on $\hat{B}\times F$ by
\begin{equation}
 \mathcal{F}_{B\times_f F}:=\sqrt{\mathcal{F}^2_{\hat{B}}\circ (\pi_B)_*+(f\circ\pi_B)^2\mathcal{F}^2_F\circ (\pi_F)_*}.\nonumber
\end{equation}
The notion of warped product for Finsler manifolds is already known and studied by several authors (for example see \cite{radu}).
By Remark \ref{veryimportantremark} it is clear that $\mathcal{F}_{B\times_f F}$ is no Finsler structure on $\hat{B}\times F$ in the sense of our definition. 
It cannot be smooth on $T\hat{B}\times 0_F$ unless $F$ is Riemannian and analogously it cannot be smooth on $0_B\times TF$ unless $B$ is Riemannian. 
Especially it is only possible to define the fundamental tensor where $\mathcal{F}_{B\times_f F}$ is smooth.
\begin{remark}
Warped products between Finsler manifolds extend the class that is given by Definition \ref{finslermanifold} and it seems nearby to define and study this larger class of 
geometric objects. This was done by Javaloyes and S\'anchez in \cite{sanchez}. They introduce conic Finsler manifolds which also cover warped products.
\end{remark}
We can also consider the abstract warped product between Finsler manifolds as metric spaces that we introduced before.
Again the two definitions provide the same notion of length and therefore they produce the same complete metric space, that we call again $B\times_f F=C$. 

It will turn out that
curvature bounds for $B$ in the sense of Alexandrov are essential in our proof where we show a curvature-dimension condition for $B\times_f F$. But a general Finsler manifold will not satisfy such a bound.
So it is convenient to assume that at least $(B,\mathcal{F}_B)$ is purely Riemannian with $\mathcal{F}^2_B=g_B$. In this case the fundamental tensor at $v_{(p,x)}$ where $\mathcal{F}_{B\times_f F}$ is smooth becomes 
\begin{eqnarray*}
 g_{i,j}(v_{(p,x)})=\begin{cases}\scriptstyle{(g_B)_{i,j}(p)}\hspace{5pt}&\mbox{ if }\hspace{5pt}1\leq i,j \leq d \\
\scriptstyle{\frac{1}{2}f^2(p) \frac{\partial^2(\mathcal{F}_F^2)}{\partial v^{i-d}\partial v^{j-d}}((\pi_F)_{*}v_{(p,x)})}\hspace{5pt}&\mbox{ if }\hspace{5pt}d+1\leq i,j \leq d+n\\
\scriptstyle{0}& \mbox{otherwise}.\end{cases}
\end{eqnarray*}

\subsection{$N$-Ricci curvature}
\begin{definition}\label{definitionNricci}
Given a complete $n$-dimensional Riemannian manifold $M$ equipped with its Riemannian distance $d_M$ 
and weighted with a smooth measure $dm_M(x)=e^{-\Psi(x)}d\vol_M(x)$ for some smooth function $\Psi:M\to\R$.
Then for each real number $N>n$ the $N$-Ricci tensor is defined as
\begin{align*}\ric^{N,m_M}(v):=\ric^{N,\Psi}(v)&:=
\ric(v)
+ \nabla^2\Psi (v)- \frac1{N-n}\nabla \Psi \otimes \nabla \Psi (v)\\
&=\ric(v)-(N-n)\frac{\nabla^2 e^{-\Psi^{\frac{1}{N-n}}}(v)}{e^{-\Psi^{\frac{1}{N-n}}(p)}}\end{align*}
where $v\in TM_p$. For $N=n$ we define
$$\ric^{N,\Psi}(v):=
\begin{cases}
\ric(v)
+\nabla^2\Psi(v) &\nabla \Psi (v)=0\\
-\infty & \mbox{else}.
\end{cases}$$
For $1\leq N<n$ we define $\ric^{N,\Psi}(v):=-\infty$ for all $v\neq 0$ and $0$ otherwise.
\end{definition}
We switch again to the setting of weighted Finsler manifolds $(M,\mathcal{F}_M,m_M)$. In this context 
the measure $m_M$ is assumed to be smooth. That means, if we consider $M$ in local coordinates, the measure $m_M$ is absolutely continuous with respect to $\mathcal{L}^n$ and the density is a 
smooth and positive function. 
We remark that there is no canonical volume for Finsler manifolds which would allow us to write $m_M$ as a density like in the Riemannian case. 
Motivated by the previous definition Ohta introduced in \cite{ohtafinsler1} for a weighted Finsler manifold
the $N$-Ricci tensor that we define now.
For $v\in TM_p$ choose a geodesic vector field $V$ on $U\ni p$ such that $v=V_p$. Then the geodesic $\eta:(-\epsilon,\epsilon)\rightarrow M$ with $\dot{\eta}(0)=v$ is an integral curve of $V$. Like above that leads to 
 a Riemannian metric
$g^V$ on $U$ and we have the following representation $m_M=e^{-\Psi_V}dvol_{g^V}$ on $U$. Then for $N\geq 1$ the $N$-Ricci tensor at $v$ is defined as
$$\mbox{ric}^{N,m_M}(v):=\ric^{N,\Psi_V}(v).$$
The benefit of this definition will appear shortly after we introduced the curvature-dimension condition.

Of course in the case of our Finslerian warped product this definition makes no sense for $v$ where the Finsler structure is not at least $C^2$. But if the $F$-component of some vector $(\xi,v)$ is non-zero, 
then we will always find a corresponding geodesic vector field with non-zero $F$-components on some open neighborhood that makes it possible to define the $N$-Ricci tensor as above. 

\subsection{The curvature-dimension condition} In this section we give a short survey about the curvature-dimension condition in sense of Lott-Villani/Sturm. (See \cite{stugeo1},\cite{stugeo2},\cite{lottvillani}.)
\begin{definition}\label{CD}
Let $(M,d,m)$ be a metric measure space. Given  $K\in\mathbb{R}$ and $N\in[1,\infty)$, the condition 
$CD(K,N)$ states that for each pair
$\mu_0,\mu_1\in\mathcal{P}_2(M,d,m)$  there exist  an optimal
coupling $q$ of $\mu_0=\rho_0m_M$ and $\mu_1=\rho_1m_M$ and a geodesic 
$\mu_t=\rho_t m_M$ in $\mathcal{P}_2({M},{d}_M,{m}_M)$ connecting them such that
\begin{equation*}
\begin{split}
\int_M\rho_t^{1-1/N'}dm\ge\int_{M\times
M}\left[\tau^{(1-t)}_{K,N'}(d(x_0,x_1))\rho^{-1/N'}_0(x_0)+
\tau^{(t)}_{K,N'}(d(x_0,x_1))\rho^{-1/N'}_1(x_1)\right]dq(x_0,x_1)
\end{split}
\end{equation*}
for all $t\in (0,1)$ and all $N'\geq N$.
\\
\\
In the case $K>0$, the \textit{volume distortion coefficients} $\tau^{(t)}_{K,N}(\cdot)$
for  $t\in (0,1)$  are defined by
$$\tau_{K,N}^{(t)}(\theta)=t^{1/N}\cdot
\left( \sigma_{K,N-1}^{(t)}(\theta)\right)^{1-1/N}\hspace{5pt}\mbox{ where }\sigma_{K,N-1}^{(t)}(\theta)=\frac {\sin\left(\sqrt{ K/{N-1}}\theta t\right)} {\sin\left(\sqrt{K/{N-1}}\theta\right)} $$
if $0\le\theta<\scriptstyle{\sqrt{\frac{N-1}K}\pi}$ and by $\tau_{K,N}^{(t)}(\theta)=\infty$ if $\theta\ge\scriptstyle{\sqrt{\frac{N-1}K}\pi}$.
In the case $K\leq0$ an analogous definition applies with an appropriate replacement of
$\sin\scriptstyle{\left(\sqrt{\frac K{N-1}}\ldots\right)}$.
\end{definition}
The definitions of $CD(K,N)$ in \cite{stugeo1,stugeo2} and \cite{lottvillani} slightly differ. For non-branching spaces, both concepts coincide.
In this case, it suffices to verify (\ref{CD}) for $N'=N$ since this already implies (\ref{CD}) for all $N'\ge N$.
Even more, the condition (\ref{CD}) can be formulated as a pointwise inequality.
\begin{lemma}[\cite{stugeo1},\cite{stugeo2},\cite{lottvillani}]
 A nonbranching metric measure space $(M,d_M,m_M)$ satisfies the curvature-dimension condition
$CD(K,N)$ for  given  numbers $K$ and $N$ if and only if
for each pair
$\mu_0,\mu_1\in\mathcal{P}_2(M,d_M,m_M)$  there exist  a dynamical optimal coupling $\nu$
with initial and terminal distributions $(e_0)_*\nu=\mu_0$, $(e_1)_*\nu=\mu_1$ such that for $\nu$-a.e. $\gamma\in \Gamma(\M)$ and all $t\in(0,1)$
\begin{equation}
\rho_t^{-1/N}(\gamma_t)
\ge\tau^{(1-t)}_{K,N}(\dot\gamma)\cdot\rho^{-1/N}_0(\gamma_0)+
\tau^{(t)}_{K,N}(\dot\gamma)\cdot\rho^{-1/N}_1(\gamma_1)
\end{equation}
where $\dot\gamma:=d_M(\gamma_0,\gamma_1)$ and $\rho_t$ denotes the Radon-Nikodym density of $(e_t)_*\nu$ with respect to $m_M$
\end{lemma}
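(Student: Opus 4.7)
The plan is to prove the two implications separately, with the nontrivial direction being ``integral $\Rightarrow$ pointwise.''

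\textbf{Easy direction (pointwise $\Rightarrow$ integral).} Assume the pointwise bound holds $\nu$-a.e. Since $(e_t)_*\nu = \rho_t m_M$, the change-of-variables formula gives
\begin{equation*}
\int_M \rho_t^{1-1/N}\,dm_M = \int_{\Gamma(M)} \rho_t^{-1/N}(\gamma_t)\,d\nu(\gamma).
\end{equation*}
I would then integrate the pointwise inequality against $\nu$ and use $(e_0,e_1)_*\nu = q$ as optimal coupling between $\mu_0$ and $\mu_1$ to recover the integral form of $CD(K,N)$ with $N'=N$. The extension to arbitrary $N'\geq N$ follows by the standard monotonicity in the exponent (and in the non-branching setting, as noted in the text, $N'=N$ suffices).

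\textbf{Hard direction (integral $\Rightarrow$ pointwise).} The strategy is a localization/restriction argument. Start with a dynamical optimal plan $\nu$ between $\mu_0,\mu_1 \in \mathcal{P}_2(M,d_M,m_M)$. For any Borel set $A\subset \Gamma(M)$ with $\nu(A)>0$, form the restricted plan $\nu_A := \nu(A)^{-1}\nu|_A$, with endpoints $\mu^A_i := (e_i)_*\nu_A$, $i=0,1$. The key observation, using non-branching, is that $\nu_A$ is itself a dynamical optimal transference plan between $\mu_0^A$ and $\mu_1^A$: indeed, any two optimal plans sharing a dynamical optimal extension cannot split, so restriction preserves optimality. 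Moreover, the intermediate measure $(e_t)_*\nu_A$ is absolutely continuous with density $\rho_t^A$ expressible in terms of $\rho_t$ and the disintegration of $\nu$.

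I would then apply the integral $CD(K,N)$ inequality to $\mu^A_0,\mu^A_1$, use Jensen's inequality applied to the concave function $r\mapsto r^{-1/N}$ to pass from integral to pointwise, and finally let the set $A$ shrink along a suitable basis of neighborhoods of a fixed generic geodesic $\gamma$. By Lebesgue differentiation (applied on $\Gamma(M)$ with respect to the measure $\nu$, or pushed down to the endpoints), the resulting averaged inequalities converge $\nu$-a.e.\ to the claimed pointwise bound, where the coefficients $\tau^{(1-t)}_{K,N}(\dot\gamma)$ and $\tau^{(t)}_{K,N}(\dot\gamma)$ emerge in the limit because $d_M(\gamma_0,\gamma_1)$ is continuous in $\gamma$ and the distortion coefficients depend continuously on this distance.

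\textbf{Main obstacle.} The delicate step is justifying that the restriction $\nu_A$ is again optimal and that the intermediate densities behave well under restriction. This requires the non-branching hypothesis in an essential way: without it, a dynamical optimal plan could split at intermediate times in a way that destroys optimality of restrictions, and one loses the ability to localize. Handling the endpoint densities $\rho_0^A,\rho_1^A$ in the limit requires a careful disintegration of $\nu$ along $(e_0,e_1)_*$, together with the $d^2$-cyclical monotonicity of $\mathrm{supp}(e_0,e_1)_*\nu$ from Lemma~\ref{optimaltransport}(ii), ensuring that distinct geodesics do not cross in a measure-theoretically harmful way. Once these measure-theoretic points are addressed, the passage from integral to pointwise $CD(K,N)$ is an application of Lebesgue differentiation theorems on Polish spaces.
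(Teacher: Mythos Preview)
The paper does not supply its own proof of this lemma; it is stated with a citation to \cite{stugeo1,stugeo2,lottvillani} and used as a black box. So there is no in-paper argument to compare your proposal against.

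That said, your outline is essentially the standard argument from those references (see in particular Sturm's \emph{Acta Math.} II, Proposition~4.2). The easy direction is exactly as you describe. For the hard direction, the localization-by-restriction idea is correct and the non-branching hypothesis enters precisely where you indicate: it guarantees that distinct geodesics in the support of $\nu$ do not intersect at intermediate times, so that the restricted plan $\nu_A$ is again the \emph{unique} dynamical optimal plan between its marginals, and the intermediate densities satisfy $\rho_t^A(\gamma_t)=\nu(A)^{-1}\rho_t(\gamma_t)$ for $\nu_A$-a.e.\ $\gamma$. This exact density relation is what makes the localization work cleanly.

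One correction: the step you describe as ``Jensen's inequality applied to the concave function $r\mapsto r^{-1/N}$'' is not quite right. That function is convex, not concave, and in any case Jensen is not the mechanism. The actual argument is by contradiction: if the pointwise inequality failed on a set $A\subset\Gamma(M)$ of positive $\nu$-measure, restrict to $A$, use the density relation above to rewrite the integral $CD$ inequality for $(\mu_0^A,\mu_1^A)$ as an integral over $A$ of the pointwise quantity, and obtain a contradiction. No Lebesgue differentiation on $\Gamma(M)$ is needed either; the contradiction is direct once you have the density formula under restriction. Apart from this mis-description of the mechanics, your plan is sound.
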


\begin{theorem}[\cite{stugeo2}, \cite{ohtafinsler1}]\label{theoremsturmohta}
A weighted complete Finsler manifold without boundary $(M,\mathcal{F}_M,m_M)$ satisfies the condition $CD(K,N)$ if and only if
$$\ric^{N,m_M}(v,v)\ge K \mathcal{F}_M^2(v)$$
for all $v\in TM$.
\end{theorem}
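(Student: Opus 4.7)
The plan is to treat the two implications separately, following the by-now standard strategy of translating displacement convexity into a pointwise inequality along Wasserstein geodesics and then into a Jacobi-type differential inequality along geodesics of $M$.

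\textbf{From the Ricci bound to $CD(K,N)$.} I would start with two compactly supported, absolutely continuous measures $\mu_0=\rho_0 m_M$ and $\mu_1=\rho_1 m_M$. In the Finsler setting one has existence and uniqueness of an optimal transport map $T(x)=\exp_x(\nabla \varphi(x))$ induced by a $c$-concave potential $\varphi$; the interpolation is given by $T_t(x)=\exp_x(t\nabla\varphi(x))$ and $\mu_t:=(T_t)_*\mu_0$. Set $\gamma_x(t):=T_t(x)$. By the non-branching of $M$, for $\mu_0$-a.e. $x$ the curve $\gamma_x$ is a minimizing geodesic and $T_t$ is injective on the support, so the density satisfies the Monge--Ampère-type identity $\rho_t(\gamma_x(t))\,J_t(x)=\rho_0(x)$, where $J_t(x)$ denotes the Jacobian of $T_t$ measured against $m_M$. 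Next, along a fixed transport geodesic $\gamma=\gamma_x$, extend $\dot\gamma$ to a geodesic vector field $V$ on a tubular neighbourhood, pass to the induced Riemannian metric $g^V$ and write $m_M=e^{-\Psi_V}\,d\vol_{g^V}$. The map $T_t$ is now a smooth map between pieces of Riemannian manifolds, so $J_t$ splits as a geometric (Riemannian) Jacobian times $e^{-(\Psi_V\circ\gamma(t)-\Psi_V\circ\gamma(0))}$. A direct computation with the Jacobi equation along $\gamma$, combined with the arithmetic--geometric mean and the convention for $\ric^{N,\Psi_V}$, yields the differential inequality
\begin{equation*}
(\log J_t)'' \,\le\, -\,\frac{1}{N}\bigl((\log J_t)'\bigr)^2 \,-\,\ric^{N,m_M}(\dot\gamma).
\end{equation*}
Using $\ric^{N,m_M}(\dot\gamma)\ge K\mathcal{F}_M^2(\dot\gamma)$ and solving this ODE inequality in comparison form (the standard Jacobi/Sturm comparison) gives the pointwise bound
\begin{equation*}
J_t(x)^{1/N}\ \ge\ \tau^{(1-t)}_{K,N}(|\dot\gamma|)\,J_0(x)^{1/N}+\tau^{(t)}_{K,N}(|\dot\gamma|)\,J_1(x)^{1/N},
\end{equation*}
which, after substituting $J_s=\rho_0/(\rho_s\circ\gamma_s)$ and integrating against $\mu_0$, is exactly the $CD(K,N)$ inequality of Definition~\ref{CD}.

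\textbf{From $CD(K,N)$ to the Ricci bound.} Here I would fix $p\in M$ and a unit vector $v\in TM_p$, extend to a geodesic vector field $V$ on a small neighbourhood $U$, pick a small radius $\varepsilon>0$ and take $\mu_0^\varepsilon$ to be $m_M$ restricted and normalised on a thin ``disk'' transverse to $V$ through $p$, and $\mu_1^\varepsilon$ its push-forward by the time-$\delta$ flow of $V$. For $\varepsilon$ small the optimal transport is (up to higher order) the flow, the transport distance is $\delta$, and the densities are smooth. Inserting these data into the pointwise $CD(K,N)$ inequality along the transport geodesic through $p$, passing to $\varepsilon\to 0$ and Taylor-expanding in $\delta$ to second order, the $O(\delta^2)$-coefficient of the resulting inequality reads exactly $\ric^{N,\Psi_V}(v)\ge K\mathcal{F}_M^2(v)=K$. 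Since $V$ was chosen so that $\ric^{N,m_M}(v)=\ric^{N,\Psi_V}(v)$, this is the claim.

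\textbf{Main obstacle.} The technical heart of both directions is the reduction from the Finslerian setting to a Riemannian computation via the geodesic vector field $V$ and the induced metric $g^V$. One must check that the Jacobian of $T_t$ really factors as a Riemannian Jacobian times the density factor $e^{-\Delta \Psi_V}$ despite the non-symmetry of $\mathcal{F}_M$, and that the choice of $V$ does not affect the final inequality because only $g^V(\dot\gamma,\dot\gamma)$, $\nabla \Psi_V(\dot\gamma)$ and the Jacobi endomorphism enter, all of which are intrinsic. The degeneracy of $\mathcal{F}^2$ at the zero section (Remark~\ref{veryimportantremark}) is harmless here since the geodesic vector field is non-zero on the neighbourhoods of interest.
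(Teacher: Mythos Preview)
The paper does not give its own proof of this theorem: it is stated as a citation result from \cite{stugeo2} (the Riemannian case) and \cite{ohtafinsler1} (the Finsler extension), with no argument supplied. So there is nothing in the paper to compare your plan against.

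That said, your outline is essentially the strategy carried out in those references. The forward direction via the Brenier--McCann/Ohta optimal map $T_t(x)=\exp_x(t\nabla\varphi(x))$, the Monge--Amp\`ere identity $\rho_t(T_t(x))J_t(x)=\rho_0(x)$, and the reduction to a Riemannian Jacobi computation along each transport geodesic through the osculating metric $g^V$ is precisely Ohta's approach in \cite{ohtafinsler1}. The backward direction by localisation and second-order Taylor expansion is Sturm's argument from \cite{stugeo2}, again transplanted to the Finsler setting via $g^V$.

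Two small points worth sharpening if you flesh this out. First, the differential inequality you write for $\log J_t$ is correct in spirit, but the passage from it to the $\tau$-coefficients is not a pure ODE comparison for $J_t^{1/N}$: one factor of $t$ in $\tau^{(t)}_{K,N}=t^{1/N}\sigma^{(t)}_{K,N-1}(\theta)^{1-1/N}$ comes from the one-dimensional ``radial'' direction along $\dot\gamma$, and the $\sigma$-factor from the remaining $N-1$ transverse directions; this splitting has to be made explicit (as in \cite{stugeo2} or \cite{ohtafinsler1}) rather than absorbed into a single scalar inequality. Second, in the displayed Jacobian inequality you have $J_0(x)^{1/N}$ and $J_1(x)^{1/N}$ on the right, but $J_0\equiv 1$; the actual endpoint data are $\rho_0^{-1/N}$ and $\rho_1^{-1/N}$ after substituting the Monge--Amp\`ere relation, so the inequality should be phrased in those terms from the outset.
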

\begin{lemma}[Generalized Bonnet-Myers Theorem, \cite{stugeo2}]
Assume that a metric measure space  $(M,d,m)$ satisfies the curvature-dimension condition
$CD(K,N)$ for $N>1$ and $K>0$. Then the diameter of $M$ is bounded by $\pi\textstyle{\sqrt{N-1/K}}$.
\end{lemma}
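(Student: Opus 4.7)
The plan is a standard argument by contradiction using the blow-up of the distortion coefficients. Suppose $\diam(M)>\pi\sqrt{(N-1)/K}$. Then I can pick points $x_0,x_1\in M$ with $d(x_0,x_1)=:L>\pi\sqrt{(N-1)/K}$. By continuity of $d$, there exists $r>0$ small enough that the open balls $A_i:=B_r(x_i)$ satisfy $d(y_0,y_1)>\pi\sqrt{(N-1)/K}$ for every $y_0\in A_0$ and $y_1\in A_1$. Since $m$ has full support and is locally finite, $0<m(A_i)<\infty$, so the uniform measures $\mu_i:=\frac{1}{m(A_i)}m\lfloor_{A_i}$ lie in $\mathcal P_2(M,d,m)$ and have bounded densities $\rho_i=\frac{1}{m(A_i)}\mathbf 1_{A_i}$.

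Now I would apply $CD(K,N)$ with $N'=N$ to these measures, yielding an optimal coupling $q$ of $\mu_0,\mu_1$ and a geodesic $(\mu_t)$ with densities $\rho_t$ so that the defining inequality holds. The key observation is that, by the very definition of the distortion coefficients, $\tau^{(t)}_{K,N}(\theta)=+\infty$ for every $t\in(0,1)$ as soon as $\theta\geq\pi\sqrt{(N-1)/K}$. Because $q$ is supported in $A_0\times A_1$, we have $d(x,y)>\pi\sqrt{(N-1)/K}$ for $q$-a.e.\ $(x,y)$, so both coefficients appearing in the integrand are identically $+\infty$ on the support of $q$. Since $\rho_0^{-1/N}(x_0)=m(A_0)^{1/N}>0$ and $\rho_1^{-1/N}(x_1)=m(A_1)^{1/N}>0$ on that support, the right-hand side of the $CD(K,N)$ inequality evaluates to $+\infty$.

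To derive a contradiction, I just need the left-hand side to be finite. Since $t\mapsto t^{1-1/N}$ is concave (as $N>1$), Jensen's inequality gives
\begin{equation*}
\int_M \rho_t^{1-1/N}\,dm \;\leq\; m\bigl(\supp(\mu_t)\bigr)^{1/N}\cdot\Bigl(\int_M\rho_t\,dm\Bigr)^{1-1/N}=m\bigl(\supp(\mu_t)\bigr)^{1/N}.
\end{equation*}
The support of $\mu_t$ lies on geodesics joining points of $A_0$ to points of $A_1$, hence is contained in the bounded set $\overline{B_{L+2r}(x_0)}$, which has finite $m$-measure by local finiteness. Thus the left-hand side is finite while the right-hand side is $+\infty$, contradicting $CD(K,N)$ and forcing $\diam(M)\leq\pi\sqrt{(N-1)/K}$.

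The only genuinely delicate point is ensuring that no cancellation rescues the right-hand side: this needs $\rho_i^{-1/N}>0$ $q$-almost everywhere, which is why I pick $\mu_i$ to be \emph{uniform} on $A_i$ rather than any absolutely continuous measure (so that the densities are bounded away from $\infty$ on their supports and the reciprocals $\rho_i^{-1/N}$ are bounded away from $0$). Everything else is bookkeeping; no fine structure of geodesics is required, only the definition of $CD(K,N)$ and the explicit form of $\tau^{(t)}_{K,N}$.
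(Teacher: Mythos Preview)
The paper does not give its own proof of this lemma; it is quoted from \cite{stugeo2}. Your argument is the standard one (essentially Sturm's original proof via the blow-up of $\tau^{(t)}_{K,N}$ past the threshold $\theta=\pi\sqrt{(N-1)/K}$), and it is correct in substance.

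One technical quibble: in the paper's conventions, ``locally finite'' only guarantees $m(B_r(x))<\infty$ for \emph{sufficiently small} $r$, not for the large ball $\overline{B_{L+2r}(x_0)}$ you use to bound the left-hand side. So the sentence ``which has finite $m$-measure by local finiteness'' does not follow directly from the definition given here. This is easily repaired: the Bishop--Gromov volume growth estimate (a consequence of $CD(K,N)$, also recalled in the paper's introduction) implies that $r\mapsto m(B_r(x))/v_{K,N}(r)$ is nonincreasing, so finiteness for small balls propagates to all balls. Alternatively, Sturm's original formulation goes through the generalized Brunn--Minkowski inequality, which sidesteps the R\'enyi entropy altogether and lands on the same contradiction. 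Either patch closes the gap with no change to your overall strategy.
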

\begin{theorem}[\cite{rajala2}]
 If $M$ satisfies the curvature-dimension condition $CD(K,N)$, then $M$ satisfies the measure contraction property $(K,N)$-MCP.
\end{theorem}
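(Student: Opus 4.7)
The plan is to derive $(K,N)$-MCP from $CD(K,N)$ by approximating the Dirac source appearing in MCP with absolutely continuous measures and invoking the pointwise form of $CD$ recorded in the lemma above. Recall that $(K,N)$-MCP asks, for every $o\in M$ with $m(\{o\})=0$ and every measurable $A$ with $0<m(A)<\infty$ (and $A$ inside the Bonnet--Myers ball around $o$ when $K>0$), for a dynamical transference plan $\Pi$ from $\delta_o$ to $\mu_1:=m(A)^{-1}m|_A$ such that $m(A)\cdot (e_t)_*[\tau^{(t)}_{K,N}(\dot\gamma)^N\,d\Pi(\gamma)]\le m$ for every $t\in(0,1]$; this is the form obtained from Ohta's $\tau^{(1-t)}$-statement by time reversal, and is what one actually derives from $CD$.

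For each $\epsilon>0$ I set $\mu_0^\epsilon:=m(B_\epsilon(o))^{-1}m|_{B_\epsilon(o)}$, an absolutely continuous element of $\mathcal{P}_2(M,d,m)$ that converges weakly to $\delta_o$. The pointwise form of $CD(K,N)$ produces a dynamical optimal plan $\Pi^\epsilon$ with $(e_0)_*\Pi^\epsilon=\mu_0^\epsilon$, $(e_1)_*\Pi^\epsilon=\mu_1$, and marginal density $\rho_t^\epsilon:=d(e_t)_*\Pi^\epsilon/dm$ satisfying
\begin{equation*}
\rho_t^\epsilon(\gamma_t)^{-1/N}\ge \tau^{(1-t)}_{K,N}(\dot\gamma)\,\rho_0^\epsilon(\gamma_0)^{-1/N}+\tau^{(t)}_{K,N}(\dot\gamma)\,\rho_1(\gamma_1)^{-1/N}
\end{equation*}
for $\Pi^\epsilon$-a.e.\ $\gamma$. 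Discarding the non-negative first summand and inserting $\rho_1(\gamma_1)=1/m(A)$ (valid because $\gamma_1\in A$) yields the one-sided bound $\rho_t^\epsilon(\gamma_t)\le \tau^{(t)}_{K,N}(\dot\gamma)^{-N}/m(A)$. Disintegrating $\Pi^\epsilon$ against $e_t$ rearranges this density estimate into the measure inequality $m(A)\cdot(e_t)_*[\tau^{(t)}_{K,N}(\dot\gamma)^N\,d\Pi^\epsilon]\le m$, i.e.\ precisely the MCP bound but with the approximating source $\mu_0^\epsilon$ in place of $\delta_o$.

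It remains to pass to the limit $\epsilon\to 0$. The family $\{\Pi^\epsilon\}$ is tight since all geodesics in their supports lie in a fixed compact subset of $M$ (using the Bonnet--Myers diameter bound when $K>0$), so along a subsequence $\Pi^\epsilon\rightharpoonup\Pi$ with $(e_0)_*\Pi=\delta_o$ and $(e_1)_*\Pi=\mu_1$. For any open $U\subset M$ and any $t\in(0,1)$ the map $\gamma\mapsto \tau^{(t)}_{K,N}(\dot\gamma)^N\mathbf{1}_U(\gamma_t)$ is lower semicontinuous and bounded on $\Gamma(M)$, so the Portmanteau theorem gives $\int \tau^{(t)}_{K,N}(\dot\gamma)^N\mathbf{1}_U(\gamma_t)\,d\Pi\le\liminf_\epsilon\int(\cdots)\,d\Pi^\epsilon\le m(U)/m(A)$, and outer regularity extends this to all Borel sets, producing the desired MCP plan $\Pi$. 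The main obstacle is precisely this limit step: since densities of weakly convergent measures do not converge pointwise, the density estimate must be treated throughout in its integrated (measure) form and transferred to the limit only by semicontinuity. A secondary subtlety is that the pointwise form of $CD$ is available only under a non-branching hypothesis (as stated in the lemma); otherwise one argues with the integral form of $CD(K,N')$, specialises to $N'=N$, and recovers the same density bound via disintegration of the optimal coupling.
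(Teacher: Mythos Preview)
The paper does not supply a proof of this statement at all; it is merely quoted from Rajala's work \cite{rajala2}. So there is no ``paper's own proof'' to compare against. That said, a few remarks on your argument are in order.

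Under a non-branching hypothesis your approximation-and-limit argument is essentially the classical one (Sturm, Ohta): the pointwise density inequality from the lemma, dropped on the $\rho_0$ side, gives the uniform bound $\rho_t^\epsilon(\gamma_t)\le \tau^{(t)}_{K,N}(\dot\gamma)^{-N}/m(A)$, and in a non-branching space the map $e_t$ is injective on the support of $\Pi^\epsilon$ for $t\in(0,1)$, so the disintegration step that turns this into the measure inequality is legitimate. The weak-limit passage is also fine provided the underlying space is proper (so that the geodesics stay in a compact set), which holds under $CD(K,N)$ with $N<\infty$ via Bishop--Gromov.

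The genuine gap is your last sentence. The whole point of Rajala's theorem---and the reason the paper cites it rather than the earlier results---is that it does \emph{not} assume non-branching. Your remark that ``otherwise one argues with the integral form of $CD(K,N')$, specialises to $N'=N$, and recovers the same density bound via disintegration'' is not a proof: the integral inequality in Definition~\ref{CD} controls only $\int\rho_t^{1-1/N}\,dm$, not the pointwise density $\rho_t(\gamma_t)$, and without non-branching there is no reason any particular Wasserstein geodesic has bounded density. Producing such a geodesic is exactly Rajala's contribution, obtained by a nontrivial selection argument among all admissible interpolations. So your proposal proves the non-branching special case but does not reach the statement actually cited.
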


\paragraph{A New Reference Measure for $B\times_f F$} 
We have to introduce a reference measure on $C$ which reflects the warped product construction. In general we define
\begin{definition}[N-warped product]\label{nwarped}
Let $(B,d_B,m_B)$ and $(F,d_F,m_F)$ be a metric measure spaces. For $N\in [1,\infty)$, the \textit{N-warped product} $(C,|\cdot,\cdot|,m)$ of $B$ and $F$ is 
a metric measure space defined as follows:
\begin{itemize}
\item[-] $C:=B\times_f F=(B\times F/_{\sim},|\cdot,\cdot|)$
\item[-] $dm_C(p,x):=\begin{cases}
                                                            f^N(p) dm_B(p)\otimes dm_F(x)& \mbox{ on }\hat{C}\\
								0 & \mbox{ on } C\backslash\hat{C}.
                                                               \end{cases}$
\end{itemize}
\end{definition}
\begin{remark}
In the setting of $K$-cones we can introduce a measure in the same way. We call the resulting metric measure space a $(K,N)$-cone.
\end{remark}

Let us consider the weighted Riemannian case again. 
On the Riemannian manifold $F$ we have
the reference measure $m_F=e^{-\Psi} d\vol_F$, where $\Psi:F\rightarrow\mathbb{R}$ is a smooth function. 
We do not change the volume on $B$ because
there is already the function $f$ which can be interpreted as weight on $B$. The Riemannian volume of $\hat{C}$ is
\begin{equation*}
 d\vol_{\hat{C}}= f^n d\vol_{\hat{B}}d\vol_F 
\end{equation*}
What is the relation to the measure $m_{C}$ introduced in Definition \ref{nwarped}? For $N\in[1,\infty)$ we have
\begin{eqnarray*}
dm_{\hat{C}}(p,x)=f^N(p)  d\vol_{\hat{B}}(p) e^{-\Psi(x)} d\vol_F(x)=f^{N-n}(p)e^{-\Psi(x)}d\vol_{\hat{C}}(p,x).
\end{eqnarray*}
We define a function $\Phi$ on $\hat{B}$ as follows
\begin{displaymath}
 e^{-\Phi(p)}=f^{N-n}(p)\Longrightarrow \Phi(p)=-(N-n)\log f(p).
\end{displaymath}
So the measure $m_{\hat{C}}$ has the density $e^{-(\Phi+\Psi)}$ with respect to $d\vol_C$.

\section{Proof of the Main Theorem}
\subsection{RicciTensor for Riemannian Warped Products}
Proposition (\ref{bigformula}) is a generalisation of formula (\ref{oneill}) for $N$-warped products for all $N\in [1,\infty)$, where $\hat{B}=B\backslash f^{-1}(0)$ is Riemannian and $(F,\Psi)$ is a weighted Riemannian manifold. 
We remind the reader of the following proposition.
\begin{proposition}
Consider vector fields $X,Y$ on $\hat{B}$ and $V,W$ on $F$ and their horizontal and respectively vertical lifts $\tilde{X}, \tilde{Y}, \tilde{V}, \tilde{W}$. 
$\tilde{\nabla}$ is the Levi-Civita-conection of $\hat{B}\times_{\hat{f}} F$. Then we have on $\hat{B}\times_{\hat{f}} F$
\begin{itemize}
\item[(i)] $\tilde{\nabla}_{\tilde{X}}\tilde{Y}=\widetilde{\nabla^{\hat{B}}_{X}Y}$,
\item[(ii)] $\tilde{\nabla}_{\tilde{X}}\tilde{V}=\tilde{\nabla}_{\tilde{V}}\tilde{X}=\frac{Xf}{f}\circ \pi_B\tilde{V}$,
\item[(iii)] $\tilde{\nabla}_{\tilde{V}}\tilde{W}=-\frac{\left\langle \tilde{V},\tilde{W}\right\rangle}{f}\circ \pi_B\widetilde{\nabla f}+\widetilde{\nabla^F_V W}$.
\end{itemize}
\end{proposition}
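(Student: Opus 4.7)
The plan is to derive all three identities from the Koszul formula for the Levi-Civita connection $\tilde{\nabla}$ on $\hat{C}=\hat{B}\times_{\hat{f}}F$, testing both sides against horizontal and vertical lifts to separate the components. Before executing the Koszul computation I would record three routine lemmas about lifts that together do all the work. First, the metric pairings: $\langle\tilde{X},\tilde{Y}\rangle=\langle X,Y\rangle_B\circ\pi_B$, $\langle\tilde{X},\tilde{V}\rangle=0$, and $\langle\tilde{V},\tilde{W}\rangle=(f^2\circ\pi_B)\cdot(\langle V,W\rangle_F\circ\pi_F)$, which come directly from $g=\pi_B^*g_B+(f\circ\pi_B)^2\pi_F^*g_F$. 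Second, the Lie brackets: $[\tilde{X},\tilde{Y}]=\widetilde{[X,Y]}$, $[\tilde{X},\tilde{V}]=0$, $[\tilde{V},\tilde{W}]=\widetilde{[V,W]}$, reflecting that horizontal lifts are tangent to the slices $\hat{B}\times\{x\}$ and vertical lifts to $\{p\}\times F$. Third, the derivation rules: $\tilde{X}(h\circ\pi_B)=(Xh)\circ\pi_B$ while $\tilde{X}(h\circ\pi_F)=0$, with the symmetric statement for $\tilde{V}$.

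With these in hand, (i) is immediate. Pairing $\tilde{\nabla}_{\tilde{X}}\tilde{Y}$ via Koszul against a vertical test field $\tilde{V}$, each of the six terms vanishes: the inner-product derivatives either involve $\tilde{V}$ applied to a pullback from $B$ or pair orthogonal lifts, while the bracket terms vanish by the second lemma or by horizontality of $[\tilde{X},\tilde{Y}]$. Hence $\tilde{\nabla}_{\tilde{X}}\tilde{Y}$ is horizontal, and pairing against another horizontal $\tilde{Z}$ the Koszul identity on $\hat{C}$ collapses to the Koszul identity on $\hat{B}$, yielding $\widetilde{\nabla^{\hat{B}}_X Y}$. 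For (ii), the torsion-free property gives $\tilde{\nabla}_{\tilde{X}}\tilde{V}-\tilde{\nabla}_{\tilde{V}}\tilde{X}=[\tilde{X},\tilde{V}]=0$, so the two mixed covariant derivatives agree. Pairing against a horizontal $\tilde{Y}$ yields zero by the same orthogonality and derivation arguments, so the result is vertical. Pairing against a vertical $\tilde{W}$, the only surviving Koszul term is $\tilde{X}\langle\tilde{V},\tilde{W}\rangle=2f\,(Xf)\circ\pi_B\cdot\langle V,W\rangle_F\circ\pi_F$, which rearranges to $\frac{Xf}{f}\circ\pi_B\cdot 2\langle\tilde{V},\tilde{W}\rangle$, proving (ii).

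For (iii) I would split into horizontal and vertical components separately. Pairing $\tilde{\nabla}_{\tilde{V}}\tilde{W}$ against a horizontal $\tilde{X}$, only $-\tilde{X}\langle\tilde{V},\tilde{W}\rangle=-2f\,Xf\cdot\langle V,W\rangle_F\circ\pi_F$ survives, and after identifying $Xf=\langle\nabla f,X\rangle_B$ this reads $-2\frac{\langle\tilde{V},\tilde{W}\rangle}{f}\cdot\langle\widetilde{\nabla f},\tilde{X}\rangle$, producing the horizontal term stated in (iii). Pairing against a vertical $\tilde{U}$, the common factor $f^2\circ\pi_B$ factors out of every surviving Koszul term and what remains is exactly $2f^2\langle\nabla^F_V W,U\rangle_F\circ\pi_F=2\langle\widetilde{\nabla^F_V W},\tilde{U}\rangle$, producing the vertical term. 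There is no real obstacle; the only thing to be careful about is bookkeeping across the six Koszul terms in six separate pairings, and ensuring that the smoothness of $f$ on $\hat{B}$ makes every expression meaningful (which is why the identities are only asserted on $\hat{C}$, not on $C$). The proposition is essentially \cite[Chapter~7]{on} transcribed into the notation used here.
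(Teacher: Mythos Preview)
Your proposal is correct and is exactly the standard Koszul-formula computation that the paper has in mind: the paper's own proof consists only of the sentence ``The proof is a straightforward calculation or can be found in \cite{on},'' and what you have written is precisely that straightforward calculation (and precisely what O'Neill does). There is nothing to add.
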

\begin{proof} 
The proof is a straightforward calculation or can be found in \cite{on}.
\end{proof}

\begin{proposition}\label{bigformula}
Let $\hat{B}$ be a Riemannian manifold and $(F,\Psi)$ be a weighted Riemannian manifold. Let $N\geq 1$ and $f:\hat{B}\rightarrow (0,\infty)$ is smooth. $\hat{B}\times_f F=\hat{C}$ is the associated 
$N$-warped product of $\hat{B}$, 
$(F,\Psi)$ and $f$. Consider $\xi+v\in T\hat{C}_{(p,x)}=TB_p\oplus TF_x$ and vector fields $X$ and $V$ with $X_p=\xi$ and $V_x=v$ and their horizontal and vertical lifts $\tilde{X}$ and $\tilde{V}$ on $\hat{C}$. Then we have
\begin{itemize}
\item[(i)]  $\ric^{N+d,\Phi+\Psi}_{\hat{C}}(\xi+v)=\ric_{\hat{B}}(\xi)-N\frac{\nabla^2f(\xi)}{f(p)}+\ric^{N,\Psi}_F(v)-\left(\frac{\Delta f(p)}{f(p)}+(N-1)\frac{(\nabla f_p)^2}{f^2(p)}\right)|\tilde{V}_{(p,x)}|^2$.
\end{itemize}
If the reference measure on $F$ is just the Riemannian volume and if we identify $\ric^{N,1}_F$ with $\ric_F$, we especially have
\begin{itemize}
\item[(ii)]  $\ric^{N+d,\Phi}_{\hat{C}}(\xi+v)=\ric_{\hat{B}}(\xi)-N\frac{\nabla^2f(\xi)}{f(p)}+\ric_F(v)-\left(\frac{\Delta f(p)}{f(p)}+(N-1)\frac{(\nabla f(p))^2}{f^2(p)}\right)|\tilde{V}_{(p,x)}|^2$.
\end{itemize}
We remind the reader that $|\tilde{V}_{(p,x)}|^2=f(p)|V_x|^2$.
\end{proposition}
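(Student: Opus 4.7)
The plan is to treat this as a weighted extension of the classical O'Neill identity (\ref{oneill}). Since the reference measure factors as $dm_{\hat C} = e^{-(\Phi + \Psi)} d\mathrm{vol}_{\hat C}$ with $\Phi(p) = -(N-n)\log f(p)$ and $\Psi(x)$, the definition of the weighted Ricci tensor gives
$$\ric^{N+d,\Phi+\Psi}_{\hat C}(W) = \ric_{\hat C}(W) + \nabla^2(\Phi+\Psi)(W) - \frac{1}{N-n}\bigl(d(\Phi+\Psi)(W)\bigr)^2,$$
so after plugging (\ref{oneill}) in for $\ric_{\hat C}$, everything reduces to computing the Hessian and gradient of $\Phi + \Psi$ on $\hat C$ and then collecting like terms.

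The first substantive step is to decompose $\nabla^2(\Phi+\Psi)(\tilde X + \tilde V)$ into four blocks using $\tilde\nabla$ and the connection identities (i)--(iii) stated just above. Since $\Phi$ pulls back from $B$ and $\Psi$ from $F$, one finds: the horizontal-horizontal block of $\nabla^2\Phi$ is $-(N-n)\bigl(\nabla^2 f(X)/f - (Xf)^2/f^2\bigr)$ and its vertical-vertical block is $-(N-n)|\nabla f|^2|\tilde V|^2/f^2$, picked up from the horizontal piece $-f^{-1}|\tilde V|^2\widetilde{\nabla f}$ of $\tilde\nabla_{\tilde V}\tilde V$; the mixed block of $\Phi$ vanishes because $\tilde V\Phi = 0$. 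For $\Psi$, the horizontal-horizontal block vanishes, the vertical-vertical block equals $(\nabla^F)^2\Psi(V,V)$, and the \emph{mixed} block is $-(Xf)(V\Psi)/f$, coming from $(\tilde\nabla_{\tilde X}\tilde V)\Psi = (Xf/f)\tilde V\Psi$. In parallel, $d(\Phi+\Psi)(\tilde X + \tilde V) = -(N-n)Xf/f + V\Psi$ and hence
$$\frac{(d(\Phi+\Psi))^2}{N-n} = (N-n)\frac{(Xf)^2}{f^2} - 2\frac{Xf}{f}V\Psi + \frac{(V\Psi)^2}{N-n}.$$

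The assembly is then a matter of cancellation. The $-2(Xf/f)V\Psi$ cross term from the mixed Hessian of $\Psi$ cancels against the $+2(Xf/f)V\Psi$ from $-(d(\Phi+\Psi))^2/(N-n)$, and the $(N-n)(Xf)^2/f^2$ piece in the diagonal Hessian of $\Phi$ cancels the analogous gradient-square piece. What survives adds $-(N-n)\nabla^2 f(X)/f$ to the $-n\nabla^2 f(X)/f$ already in (\ref{oneill}), upgrading the coefficient to $-N$; adds $-(N-n)|\nabla f|^2|\tilde V|^2/f^2$ to the $-(n-1)|\nabla f|^2|\tilde V|^2/f^2$ from (\ref{oneill}), upgrading the coefficient to $-(N-1)$; and in the purely vertical slot assembles $\ric_F(v) + (\nabla^F)^2\Psi(v) - (V\Psi)^2/(N-n)$, which is exactly $\ric^{N,\Psi}_F(v)$. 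That establishes (i), and (ii) follows by setting $\Psi \equiv 0$ so that $(\nabla^F)^2\Psi$ and $V\Psi$ vanish.

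The main obstacle is purely bookkeeping: one must handle each of the four Hessian blocks and the gradient-square carefully (especially the nonvanishing mixed block $\nabla^2\Psi(\tilde X,\tilde V)$, which is the only place where base-fibre coupling from the weight enters) and then verify that the two cross terms cancel exactly. A secondary subtlety is the degenerate cases: for $N = n$ one must separately check the conditional definition of $\ric^{N,\Psi}_F$ (where $\nabla\Psi\cdot v = 0$ is forced), and for $1 \le N < n$ the formula is vacuous on vectors with non-trivial vertical part, consistently with Definition \ref{definitionNricci}.
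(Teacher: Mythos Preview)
Your proposal is correct and follows essentially the same route as the paper: expand $\ric^{N+d,\Phi+\Psi}_{\hat C}$ via Definition~\ref{definitionNricci}, plug in the classical O'Neill formula~(\ref{oneill}), and compute the Hessian and gradient of $\Phi+\Psi$ block by block using the connection identities for $\tilde\nabla$, observing the same cancellations you describe. The only cosmetic difference is that the paper proves (ii) first (with $\Psi\equiv 0$) and then upgrades to (i) by adding the $\Psi$-terms, whereas you go straight to (i) and specialize; the intermediate computations and the treatment of the degenerate cases $N=n$ and $N<n$ are the same.
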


\begin{proof} First we assume $N>n$ where $n$ is the dimension of $F$. We can calculate the $(N+d)$-Ricci curvature explicitly by
leading everything back to formula (\ref{oneill}).
We begin with the second formula. 
That means the reference measure on $F$ is simply the Riemannian volume. 
We have to find espressions for the first and second derivative of $\Phi:\hat{C}\rightarrow \mathbb{R}$. We remind the reader of formula (\ref{lastformula}). 
Set $\bar{W}=\tilde{X}+\tilde{V}$.
\begin{displaymath}
 \nabla^2\Phi(\bar{W})=\nabla^2\Phi(\tilde{X})+\nabla^2\Phi(\tilde{V})+2\underbrace{\nabla^2\Phi(\tilde{X},\tilde{V})}_{=0}
\end{displaymath}
where $\tilde{X}$ and $\tilde{V}$ are lifts of vector fields $X$ and $V$. Further we have
\begin{eqnarray*}
\nabla^2\Phi(\tilde{X})&=&\tilde{X}\tilde{X}\Phi-(\nabla_{\tilde{X}}\tilde{X})\Phi=XX\Phi-(\nabla_XX)\Phi=\nabla^2\Phi(X)\\
&=&-(N-n)\left(X\Big(\frac{Xf}{f}\Big)-\frac{\nabla_XXf}{f}\right)\\
&=&-(N-n)\left(\Big(XXf\cdot f-Xf\cdot Xf\Big)\frac{1}{f^2}-\frac{\nabla_XXf}{f}\right)\\
\nabla^2\Phi(\tilde{V})&=&\tilde{V}\tilde{V}\Phi-(\nabla_{\tilde{V}}\tilde{V})\Phi=-(N-n)\frac{|\nabla f|^2}{f^2}|\tilde{V}|^2\\
\frac{1}{N-n}(\nabla\Phi\otimes\nabla\Phi)(\bar{W})&=&\frac{1}{N-n}(\nabla\Phi\otimes\nabla\Phi)(X)=(N-n)\frac{1}{f^2}(Xf\cdot Xf) 
\end{eqnarray*}
So the $(N+d)-$Ricci curvature becomes
\begin{eqnarray*}
 \ric_{\hat{C}}^{N+d,\Phi}(\bar{W})&=&\ric_{\hat{C}}(\bar{W})\\
&&-(N-n)\bigg(\frac{1}{f^2}\big(XXf\cdot f-Xf\cdot Xf\big)-\frac{\nabla_XXf}{f}
+\frac{|\nabla f|^2}{f^2}|\tilde{V}|^2\bigg)\\
&&-(N-n)\frac{1}{f^2}(Xf\cdot Xf) \\
&=&\ric_{\hat{C}}(\bar{W})\\
&&-(N-n)\bigg(\frac{1}{f^2}XXf\cdot f-\frac{\nabla_XXf}{f}+\frac{|\nabla f|^2}{f^2}|\tilde{V}|^2\bigg)\\
&=&\ric_{\hat{C}}(\bar{W})-(N-n)\frac{\nabla^2f(X)}{f}-(N-n)\frac{|\nabla f|^2}{f^2}|\tilde{V}|^2\\
&=&\ric_{\hat{B}}(X)-N\frac{\nabla^2f(X)}{f}+\ric_F(V)-\left(\frac{\Delta f}{f}+(N-1)\frac{|\nabla f|^2}{f^2}\right)|\tilde{V}|^2
\end{eqnarray*}
Now we change the measure on $F$. There is reference measure $e^{-\Psi} d\vol_F$ on $F$ for a function $\Psi:F\rightarrow\mathbb{R}$.
\begin{eqnarray*}
\nabla^2\Psi(\bar{W})&=&\underbrace{\nabla^2\Psi(\tilde{X})}_{=0}+\nabla^2\Psi(\tilde{V})+2\nabla^2\Psi(\tilde{X},\tilde{V})\\
\nabla^2\Psi(\tilde{V})&=&\tilde{V}\tilde{V}\Psi-(\nabla_{\tilde{V}}\tilde{V})\Psi\\
&=& VV\Psi -(\nabla_VV)\Psi+\langle \nabla f,\nabla\Psi\rangle\frac{1}{f}|V|^2\\
&=&\nabla^2\Psi(V,V)+\langle \nabla f,\nabla\Psi\rangle\frac{1}{f}|V|^2=\nabla^2\Psi(V,V)\\
\nabla^2\Psi(\tilde{V},\tilde{X})&=&\tilde{V}\tilde{X}\Psi-(\nabla_{\tilde{V}}\tilde{X})\Psi\hspace{5pt}=\hspace{5pt}-\frac{1}{f}\tilde{X}  f\cdot\tilde{V}\Psi\\
(\nabla(\Psi+\Phi))\otimes(\nabla(\Psi+\Phi))(\bar{W})&=&(\nabla\Psi\otimes\nabla\Psi)(\tilde{V})+(\nabla\Phi\otimes\nabla\Phi)(\bar{W})-2\frac{N-n}{f}\tilde{X}f\cdot \tilde{V}\Psi
\end{eqnarray*}
The $(N+d)-$Ricci curvature becomes
\begin{eqnarray*}
 \ric^{N+d,\Phi+\Psi}_{\hat{C}}(\bar{W})&=&\ric_{\hat{C}}(\bar{W})+\nabla^2(\Phi+\Psi)(\bar{W})\\
&&-\frac{1}{N-n}\left(\nabla(\Phi+\Psi)\otimes\nabla(\Phi+\Psi)(\bar{W})\right)\\
&=&\ric_{\hat{C}}(\bar{W})+\nabla^2\Phi(\bar{W})-\frac{1}{N-n}(\nabla\Phi\otimes\nabla\Phi)(\bar{W})\\
&&+\nabla^2\Psi(\bar{W})-\frac{1}{N-n}\left((\nabla\Psi\otimes\nabla\Psi)(\tilde{V})-2\frac{N-n}{f}\tilde{X}f\cdot \tilde{V}\Psi\right)\\
&=& \ric_{\hat{C}}^{N+d,\Phi}(\bar{W})+\nabla^2\Psi(\bar{W})\\
&&-\frac{1}{N-n}(\nabla\Psi\otimes\nabla\Psi)(\tilde{V})+2\frac{1}{f}\tilde{X}f\cdot \tilde{V}\Psi\\
&=&\ric_{\hat{B}}(X)-N\frac{\nabla^2f(X)}{f}+\ric_F(V)-\left(\frac{\Delta f}{f}+(N-1)\frac{|\nabla f|^2}{f^2}\right)|\tilde{V}|^2\\
&&+\nabla^2\Psi(V,V)-2\frac{1}{f}\tilde{X}  f\cdot\tilde{V}\Psi\\
&&-\frac{1}{N-n}(\nabla\Psi\otimes\nabla\Psi)(V)+2\frac{1}{f}\tilde{X}f\cdot \tilde{V}\Psi\\
&=&\ric_{\hat{B}}(X)-N\frac{\nabla^2f(X)}{f}+\ric^{N,\Psi}_F(V)-\left(\frac{\Delta f}{f}+(N-1)\frac{|\nabla f|^2}{f^2}\right)|\tilde{V}|^2\\
\end{eqnarray*}
Now we consider the case $N=n$. $\nabla(\Psi+\Phi)(\xi+v)\neq 0$ for $\xi+v\in TC_{(p,x)}$, where $\xi\in TB_p$ and $v\in TF_x$, is
equivalent to $\nabla\Psi(v)\neq 0$ or $\nabla\Phi(\xi)\neq 0$. So by definition the left hand side of our formula 
evaluated at $(\xi,v)$
 is $=-\infty$ if and only if 
the right hand side is $=-\infty$.

If $\nabla(\Psi+\Phi)(\xi+v)=0$, then choose again vector fields $V$ and $X$ with $V_x=v$ and $X_p=\xi$, repeat the above calculation for $N>n$
and evaluate the formula at $(\xi,v)$. The terms with $\frac{1}{N-n}$ disappear. Let $N\rightarrow n$ and get the desired result.

If $N<n$ then $\ric^{N+d,\Psi+\Phi}_{\tilde{C}}=-\infty$ and $\ric^{N,\Psi}_F=-\infty$ by definition. \end{proof}
\subsection{Finsler Case}
Now we treat the case of Finsler manifolds. 

\begin{proposition}\label{bigformula2}
Let $(F,m_F)$ be a weighted Finsler manifold. $N\geq 1$, $f:\hat{B}\rightarrow (0,\infty)$ and $\hat{B}$ are as in the previous proposition. $\hat{B}\times_f F=\hat{C}$ is the associated 
$N$-warped product. Then we have
$$\ric^{N+d,m_C}_{\hat{C}}(\xi+v)=\ric_{\hat{B}}(\xi)-N\frac{\nabla^2f(\xi)}{f(p)}+\ric^{N,m_F}_F(v)-\left(\frac{\Delta f(p)}{f(p)}+(N-1)\frac{(\nabla f_p)^2}{f^2(p)}\right)\mathcal{F}_{\hat{C}}(v)^2$$
where $\xi+v\in T\hat{C}_{(p,x)}$ with $v\neq 0$ (Especially the $N+d$-Ricci tensor of $\hat{C}$ is well-defined at $\xi+v$ because $\mathcal{F}_{\hat{C}}$ is smooth in this direction). 
\end{proposition}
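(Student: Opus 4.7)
The strategy is to reduce to the Riemannian case (Proposition \ref{bigformula}) by choosing a convenient geodesic extension of $v$ in $F$ and exploiting the block structure of the Finslerian warped-product fundamental tensor. The assumption $v \neq 0$ plays a double role: it keeps $\mathcal{F}_{\hat{C}}$ smooth in direction $\xi+v$, and by Theorem \ref{fundamentaltheorem} it guarantees that the Finsler geodesic issued from $(p,x)$ with initial velocity $\xi + v$ has a nonconstant (pre-)geodesic $F$-projection, so smoothness persists along the curve.

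First, extend $v$ to a geodesic vector field $V$ on a neighborhood $U_F \subset F$ of $x$; by construction this yields a Riemannian metric $g^V$ on $U_F$ together with a smooth function $\Psi_V$ such that $m_F = e^{-\Psi_V} d\vol_{g^V}$, and by the Finslerian definition of $N$-Ricci one has
$$\ric^{N,m_F}_F(v) = \ric^{N,\Psi_V}_{g^V}(v).$$
Form the ordinary Riemannian warped product $\bar g := g_B + f^2 \cdot g^V$ on $\hat B \times U_F$, and weight it by $\Phi + \Psi_V$ with $\Phi := -(N-n)\log f$. A short volume computation shows
$$dm_{\hat{C}} = f^N d\vol_{g_B} \otimes e^{-\Psi_V} d\vol_{g^V} = e^{-(\Phi+\Psi_V)} d\vol_{\bar g}.$$
Applying Proposition \ref{bigformula} to this weighted smooth Riemannian warped product at $\xi+v$ produces exactly the right-hand side of the claimed formula, observing that $\bar g(v,v) = f^2(p)\,\mathcal{F}_F^2(v) = \mathcal{F}_{\hat{C}}^2(v)$.

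The remaining and genuinely delicate task is to identify $\ric^{N+d,\Phi+\Psi_V}_{\bar g}(\xi+v)$ with the intrinsic Finsler quantity $\ric^{N+d,m_C}_{\hat{C}}(\xi+v)$. Extend $\xi+v$ to a geodesic vector field $W$ on a neighborhood of $(p,x)$ in $\hat{C}$; then by definition
$$\ric^{N+d,m_C}_{\hat{C}}(\xi+v) = \ric^{N+d,\Psi_W}_{g^W}(\xi+v),$$
where $m_{\hat{C}} = e^{-\Psi_W} d\vol_{g^W}$ locally. The explicit block description of the warped fundamental tensor recorded at the end of Section \ref{FinslerManifolds} shows that $g^W$ has $B$-block $g_B$ and $F$-block $f^2$ times the $\mathcal{F}_F$-fundamental tensor evaluated at the $F$-component of $W$. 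By Theorem \ref{fundamentaltheorem}, along the geodesic through $(p,x)$ this $F$-component coincides, up to reparametrization, with the velocity of an $F$-geodesic extending $v$, hence $g^W$ and $\bar g$ share the same 2-jet at $(p,x)$; the weighted Ricci tensors only depend on this 2-jet, so they must agree at $(p,x)$, and likewise the weights $\Psi_W$ and $\Phi+\Psi_V$ match to the required order. The main obstacle is the clean verification of this 2-jet matching (the geodesic flow on $\hat{C}$ does not split into the $\hat{B}$- and $F$-flows), which is essentially the Finsler content of O'Neill's horizontal/vertical-lift identities adapted to the warped setting.
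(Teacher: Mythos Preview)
Your overall strategy is the same as the paper's: pick a geodesic vector field on $F$ through $v$, form the Riemannian warped product $\bar g = g_B + f^2 g^V$, apply Proposition~\ref{bigformula}, and then identify the result with the intrinsic Finsler $(N+d)$-Ricci of $\hat C$. The identification step is where your proposal has a genuine gap and where the paper's argument differs in an important way.

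You take an arbitrary geodesic extension $V$ of $v$ in $F$ and an arbitrary geodesic extension $W$ of $\xi+v$ in $\hat C$, and then argue that $g^W$ and $\bar g$ agree to second order at $(p,x)$. But your justification (``along the geodesic through $(p,x)$ the $F$-component coincides up to reparametrization with the velocity of an $F$-geodesic'') only controls $g^W$ along a single curve, not on a full neighborhood; this is not enough to pin down the $2$-jet of $g^W$ at $(p,x)$, as you yourself flag in your last sentence. With independent choices of $V$ and $W$ there is no reason for $(\pi_F)_* W$ to be pointwise proportional to $V$ off that curve, and without such proportionality the fundamental-tensor block computation only gives $g^W = g_B + f^2 g^{(\pi_F)_* W}$, not $g_B + f^2 g^V$.

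The paper avoids this by making \emph{coupled} choices. It picks a nearby point $(p_0,x_0)$ on the geodesic through $(p,x)$ and sets $\bar V := \nabla d_F(x_0,\cdot)$ on $F$ and $\bar W := \nabla d_{\hat C}((p_0,x_0),\cdot)$ on $\hat C$. Then \emph{every} integral curve of $\bar W$ is a $\hat C$-geodesic from $(p_0,x_0)$, and by Theorem~\ref{fundamentaltheorem} its $F$-projection is a reparametrized $F$-geodesic from $x_0$, i.e.\ an integral curve of $\bar V$. Hence $(\pi_F)_*\bar W_{(r,q)}$ is a positive multiple of $\bar V_q$ at \emph{every} point of a neighborhood, and zero-homogeneity of the fundamental tensor gives the exact equality $g^{\bar W} = g_B + f^2 g^{\bar V}$ on that whole neighborhood, not merely a $2$-jet matching. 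With this, $\Psi_{\bar W}=\Phi+\Psi_{\bar V}$ and the Riemannian formula applies verbatim. If you insert this specific choice of $V$ and $W$ into your argument, the ``main obstacle'' you identify disappears.
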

\begin{proof}
Choose $\xi+v\in T\hat{C}_{(p,x)}$ such that $\mathcal{F}_{\hat{C}}(\xi+v)=1$ and with $v\neq 0$ and a unit-speed geodesic $\gamma=(\alpha,\beta):\left[-\epsilon,\epsilon\right]\rightarrow \hat{C}$ with
$\dot{\alpha}(0)=\xi$ and $\dot{\beta}(0)=v$. We set $\gamma(-\epsilon)=(p_0,x_0)$ and $\gamma(\epsilon)=(p_1,x_1)$. We choose $\epsilon$ small such that $\mbox{L}(\gamma)=2\epsilon$ is sufficiently 
far away from the cut radius of the endpoints.
Up to reparametrization $\beta$ is geodesic in $F$ between $x_0$ and $x_1$ by Theorem \ref{fundamentaltheorem}. Let $\bar{\beta}:\left[0,\mbox{L}\right]\rightarrow F$ be the unit-speed 
reparametrization of $\beta$. That means there exists a $s:\left[-\epsilon,\epsilon\right]\rightarrow [0,L]$ such that $\bar{\beta}\circ s=\beta$. $\mbox{L}$ is the length of $\beta$. 
There exists $t_0\in\left[0,\mbox{L}\right]$ such that $\bar{\beta}(t_0)=\beta(0)=x$. We have
$$\dot{\beta}(t)=s'(t)\dot{\bar{\beta}}(s(t))=\mathcal{F}_F(\dot{\beta}(t))\dot{\bar{\beta}}(s(t)).$$
We extend this last observation to the flow of geodesic vector fields $\bar{V}$ and $\bar{W}$.
$$\bar{V}=\nabla d_F(x_0,\cdot)$$ 
is a smooth geodesic vector field on some neighborhood $U$ of $x$. We choose $U$ small enough such that $x_0,x_1\notin U$ and it does not intersect with the cut locus of $x_0$.
Then, if we restrict the image of $\bar{\beta}$ to $U$, it is an integral curve of $\bar{V}$. 
We can define a Riemannian metric $g^{\bar{V}}$ on $U$ with respect to this geodesic vector field and then we can represent the measure $m_F$ with a positive smooth density $\Psi_{\bar{V}}$ with respect to $d\vol_{g^{\bar{V}}}$. 
Additionally, we have
\begin{equation}\label{simple}
 \mathcal{F}_F(\lambda v)=\sqrt{g^{\bar{V}}(\lambda v,\lambda v)} \mbox{  and  } \ric^{N,m_F}(\lambda v)=\ric^{N,\Psi_{\bar{V}}}(\lambda v)\mbox{ for all } \lambda>0
\end{equation}
Consider the vector field 
\begin{displaymath}
\bar{W}=\nabla d_{\hat{C}}((p_0,x_0),\cdot)
\end{displaymath}
restricted to $(\hat{B}\times U)\cap\left\{(p,x):(\pi_F)_*(\bar{W}_{(p,x)})\neq 0\right\}=\hat{U}$ which is open, where $\pi_F$ is the projection from $\hat{C}$ to $F$. Every intergral curve of $\bar{W}$ coincides in $\hat{U}$ with
a unit-speed geodesic from $(p_0,x_0)$ to a point $(\bar{p},\bar{x})\in\hat{U}$. And especially, like we mentioned above, the projection to $F$ of each such geodesic is after reparametrization the geodesic that connects $x_0$ and 
$\bar{x}$ in $F$.
Thus the vertical projections of integral curves of $\bar{W}$ are integral curves of $\bar{V}$ after reparametrization. For an arbitrary integral curve $\gamma=(\alpha,\beta)$ of $\bar{W}$ we do the following explicit computation
\begin{align*}
(\pi_F)_*(\bar{W}_{\gamma(t)})&=(\pi_F)_*(\dot{\gamma}(t))\\
&=(\pi_F)_*(\dot{\alpha}(t)+\dot{\beta}(t))=\dot{\beta}(t)\\
&=\mathcal{F}_F(\dot{\beta}(t))\dot{\bar{\beta}}(s(t))\\
&=\mathcal{F}_F(\dot{\beta}(t))\bar{V}_{\bar{\beta}(s(t))}=\mathcal{F}_F(\dot{\beta}(t))\bar{V}_{\beta(t)}\\
\Rightarrow\hspace{20pt}(\pi_F)_*(\bar{W}_{(r,p)})&=\mathcal{F}_F((\pi_F)_*\bar{W}_{(r,p)})\bar{V}_{p}\hspace{10pt} \forall(r,p)\in\hat{U}
\end{align*}
Thus $\mathcal{F}_F((\pi_F)_*\bar{W})^{-1}\bar{W}=:W$ is $\pi_F$-related to $\bar{V}$. \\
\\
We remember that by definition 
\begin{displaymath}
 \mathcal{F}_{\hat{C}}^2:=\mathcal{F}^2_{\hat{B}}\circ (\pi_B)_*+(f\circ\pi)^2\mathcal{F}^2_F\circ (\pi_F)_*.
\end{displaymath}
The Finslerian $N$-Ricci tensor with respect to $dm_C=d\mbox{vol}_{\hat{B}}\otimes f^N dm_F$ for vectors of $\bar{W}$ is the Riemannian $N$-Ricci tensor of $g^{\bar{W}}$ with respect to $\Psi_{\bar{W}}$. 
The components of $g^{\bar{W}}$ are
$
(g_B)_{i,j}(p)$
if $ 1\leq i,j \leq d$ and
\begin{align*}
g^{\bar{W}}_{i,j}|_{(p,x)}&=\frac{1}{2}\frac{\partial^2(\mathcal{F}_{\hat{C}}^2)}{\partial v^i\partial v^j}(\bar{W}_{(p,x)})\\
&=\frac{1}{2} f^2(p) \frac{\partial^2(\mathcal{F}_F^2)}{\partial v^{i\mbox{-}d}\partial v^{j\mbox{-}d}}((\pi_F)_*\bar{W}_{(p,x)})\\
&=\frac{1}{2} f^2(p) \frac{\partial^2(\mathcal{F}_F^2)}{\partial v^{i\mbox{-}d}\partial v^{j\mbox{-}d}}(\mathcal{F}_F((\pi_F)_*\bar{W}_{(p,x)})\bar{V}_{x})
=f^2(p)g^{\bar{V}}_{i\mbox{-}d,j\mbox{-}d}|_x
\end{align*}
if $d+1\leq i,j \leq n+d$.
The last equality holds because the fundamental tensor is homogenous of degree zero. But then $g^{\bar{W}}|_{(p,x)}=(g_B)|_p+f^2(p)(g^{\bar{V}})|_{x}$ for all $(p,x)\in \hat{U}$
 and we can apply the formula (\ref{oneill}), that especially holds at $\xi+v$. Together with (\ref{simple}) this yields the desired formula for $\xi+v$. If $\mathcal{F}_{\hat{C}}(\xi+v)\neq 1$,
consider the normalized vector, repeat everything and use (\ref{simple}) to get the same result.
\end{proof}
\subsection{Optimal Transportation in Warped Products}
The next theorem is not tied to the context of Finsler manifolds but a purely metric space result.

\begin{theorem}\label{singularitytransport}
Let $(B,d_B)$  be a complete Alexandrov space with CBB by $K$ and let $(F,d_F,m_F)$ be a metric measure space satisfying the $((N-1)K_F,N)$-MCP for $N\geq 1$ and $K_F>0$ and $\mbox{diam}(F)\leq\pi/\sqrt{K_F}$. 
Let $f:B\rightarrow\mathbb{R}_{\geq0}$ be some $\mathcal{F}K$-concave function
such that $X=\partial B= f^{-1}(\left\{0\right\})$, $X\neq\emptyset$ and
\begin{equation*}
D f_p\leq\sqrt{K_F}\mbox{ for all }p\in X.
\end{equation*}
Consider $C=B\times_f F$. 
Let $\Pi$ be an optimal dynamical transference plan in $C$ such that $(e_0)_*\Pi$ is absolutely continuous with respect to 
$m_C$ and $\spt\Pi=\Gamma$. Then the set
\begin{eqnarray*}
\Gamma_X:=\left\{\gamma\in\spt\Pi:\,\exists t\in\left(0,1\right):\gamma(t)\in X\right\}
\end{eqnarray*}
has $\Pi$-measure $0$.
\end{theorem}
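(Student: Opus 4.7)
My plan is to follow the strategy Bacher and Sturm used for cones in \cite{bastco}, with the main novelty being that here the singular set $X$ is positive-dimensional rather than a single apex point.

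First, because Definition \ref{nwarped} gives $m_C(X)=0$, the hypothesis $(e_0)_*\Pi \ll m_C$ forces $\gamma_0 \notin X$ for $\Pi$-a.e.\ $\gamma$. I split $\Gamma_X$ into $\Gamma'_X := \{\gamma \in \Gamma_X : \gamma_1 \in \hat{C}\}$ and $\Gamma''_X := \{\gamma \in \Gamma_X : \gamma_1 \in X\}$. The piece $\Gamma''_X$ is dealt with by a direct variant of the main argument: by Theorem \ref{fundamentaltheorem2} such a $\gamma$ is entirely horizontal on its initial determinate subinterval (with $\beta$ constant up to the first hitting of $X$) and its terminal point lies in the $m_C$-null fiber $X$, so the contribution of $\Gamma''_X$ can be treated together with $\Gamma'_X$ by the same cyclical-monotonicity argument below, applied to the ``truncated'' geodesics that reach $X$ at their endpoint.

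For the core piece $\Gamma'_X$, Proposition \ref{singularity} gives a rigid structure: every $\gamma = (\alpha, \beta) \in \Gamma'_X$ is two horizontal segments glued at some $\alpha(t_0) \in X$, with $\beta$ piecewise constant and $d_F(\beta(0), \beta(1)) = \pi/\sqrt{K_F} =: D$. Hence the optimal coupling $\hat{\pi} := (e_0, e_1)_*(\Pi|_{\Gamma'_X})$ is concentrated on the antipodal set $\{((p_0, x_0), (p_1, x_1)) : d_F(x_0, x_1) = D\}$, and on this set the warped-product distance equals $d_C((p_0, x_0), (p_1, x_1)) = \inf_{\bar{p} \in X}\bigl[d_B(p_0, \bar{p}) + d_B(\bar{p}, p_1)\bigr]$. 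The MCP hypothesis on $F$ combined with the sharp diameter bound $\diam F \le D$ yields, via a Bishop--Gromov-type argument, that the antipodal sphere $S_{x_0} := \{y \in F : d_F(x_0, y) = D\}$ is $m_F$-null for every $x_0 \in F$; absolute continuity of $(e_0)_*\Pi$ with respect to $m_C$ then makes the $F$-marginal of the initial distribution absolutely continuous with respect to $m_F$.

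To turn this into a contradiction I assume $\Pi(\Gamma'_X) > 0$ and exploit $d_C^2$-cyclical monotonicity of $\spt \hat{\pi}$: by absolute continuity and the null-sphere property above, I can select two pairs $((p_0, x_0),(p_1, x_1))$ and $((p_0', x_0'),(p_1', x_1'))$ in $\spt \hat{\pi}$ with $x_0 \neq x_0'$ and $d_F(x_0, x_1'), d_F(x_0', x_1) < D$ (only a null set of $y$ is at distance exactly $D$ from a given point). The through-$X$ cost of the original pairs equals the sum of ``broken'' $B$-distances via $X$, whereas the swapped pairs admit direct warped-product geodesics staying in $\hat{C}$ whose cost is strictly smaller; this contradicts cyclical monotonicity. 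The hardest step, compared to Bacher--Sturm, is controlling the infimum $\inf_{\bar{p} \in X}[d_B(p_0, \bar{p}) + d_B(\bar{p}, p_1)]$ uniformly under the swap, since in their setting $X$ is a single point and $\bar{p}$ is forced; here one needs a measurable selection of $\bar{p}(\gamma)$ along $\Pi$-a.e.\ $\gamma \in \Gamma'_X$ and a quantitative comparison of the through-$X$ distance with the straight distance, which is precisely where the pointwise bound $|\nabla f|_p \le \sqrt{K_F}$ on $\partial B$ from Proposition \ref{conditions} enters decisively.
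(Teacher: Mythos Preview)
Your proposal identifies the right ingredients---Proposition~\ref{singularity}, cyclical monotonicity, and the MCP antipode structure---but the argument as written has a genuine gap at exactly the point you flag as ``the hardest step.'' You assert that the swapped pairs $((p_0,x_0),(p_1',x_1'))$ and $((p_0',x_0'),(p_1,x_1))$ have \emph{strictly smaller} total cost, but this is not established: nothing prevents the swapped $B$-projections from being so far apart that the gain in the $F$-direction is offset. Saying that the gradient bound ``enters decisively'' is not a proof; you never construct the actual comparison that converts $Df_p\le\sqrt{K_F}$ into a distance inequality.

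The paper's proof avoids this trap by two choices you do not make. First, rather than swapping arbitrary pairs, it fixes two starting points $(p,x),(p,\tilde x)\in\Omega_0$ with the \emph{same} $B$-coordinate; this is what makes the comparison tractable, since then $|s,(p,\tilde x)|=|s,(p,x)|$ for the singular point $s$. Second, the decisive inequality $|(p,\tilde x),(q,y)|\le|(p,x),(q,y)|$ is obtained not by a soft argument but by building a nonexpanding map $\Psi:\hat S^3_K\to B\times_f[0,\pi]$ out of the gradient exponential $\exp_s$ at the singular point, using quasi-geodesics, tangent $K$-cones, and the doubling of $B$---genuinely Alexandrov tools. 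Crucially, the paper proves only a \emph{non-strict} inequality, then uses cyclical monotonicity to force equality, and reads off from the equality case that $|\tilde x,y|=\pi$; Ohta's uniqueness-of-antipodes theorem then gives $x=\tilde x$. This shows $\mu_0$ is concentrated on a graph over $B$, and Fubini finishes. Your strict-inequality route would require a sharper estimate than the paper actually proves, and you have not supplied it.
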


\begin{proof} 
We can assume that for all $\gamma\in\Gamma$ there is a $t\in\left(0,1\right)$ such that $\gamma(t)\in X$ and without loss of generality $K_F=1$.
We set $\mu_t=(e_t)_*\Pi$ and $\spt\mu_t=\Omega_t$. $\pi=(e_0,e_1)_*\Pi$ is an optimal plan between $\mu_0$ and $\mu_1$. We assume that $\Omega_0\cap X=\emptyset$. 
For the proof we use the following results of Ohta
\begin{theorem}[\cite{ohtmea}]
If a metric measure space $(M,d,m)$ satisfies the $(K,N)$-MCP for some $K>0$ and $N>1$, then, for any $x\in M$, there exists at most one point $y\in M$ such that $|x,y|=\pi\sqrt{(N-1)/K}$. 
\end{theorem}
\begin{lemma}[\cite{ohtpro}]\label{lemmaohta}
Let $(M,d,m)$ be a metric measure space satisfying the $(K,N)$-MCP for some $K>0$ and $N>1$. If $\mbox{diam} M=|p,q|=\scriptstyle{\pi\sqrt{\frac{N-1}{K}}}$,
then for every point $z\in M$, we have $|p,z|+|z,q|=|p,q|$. In particular there exists a minimal geodesic from $p$ to $q$ passing through $z$. 
\end{lemma}
We want to show that $\mu_0$ is actually concentrated on the graph of some map $\phi:p_1(\Omega_0)\subset B\rightarrow F$ where $p_1: B\times_f^N F\rightarrow B$ is the projection map. 
Then since the measure $\mu_0$ is absolutely continuous with respect to the product measure $f^Nd\mathcal{H}^d_B\otimes dm_F$, 
its total mass has to be zero by Fubinis' theorem and the fact that $m_F$ contains no atoms.
We define $\phi$ as follows. Choose $(p,x)\in\Omega_0$ which is starting point of some transport geodesic $\gamma=(\alpha,\beta)$. If $(p,\tilde{x})\in\Omega_0$, we show that $x=\tilde{x}$. So $\phi$ can be
defined by $p\mapsto x$.
\begin{figure}[h!]
\centering
\input{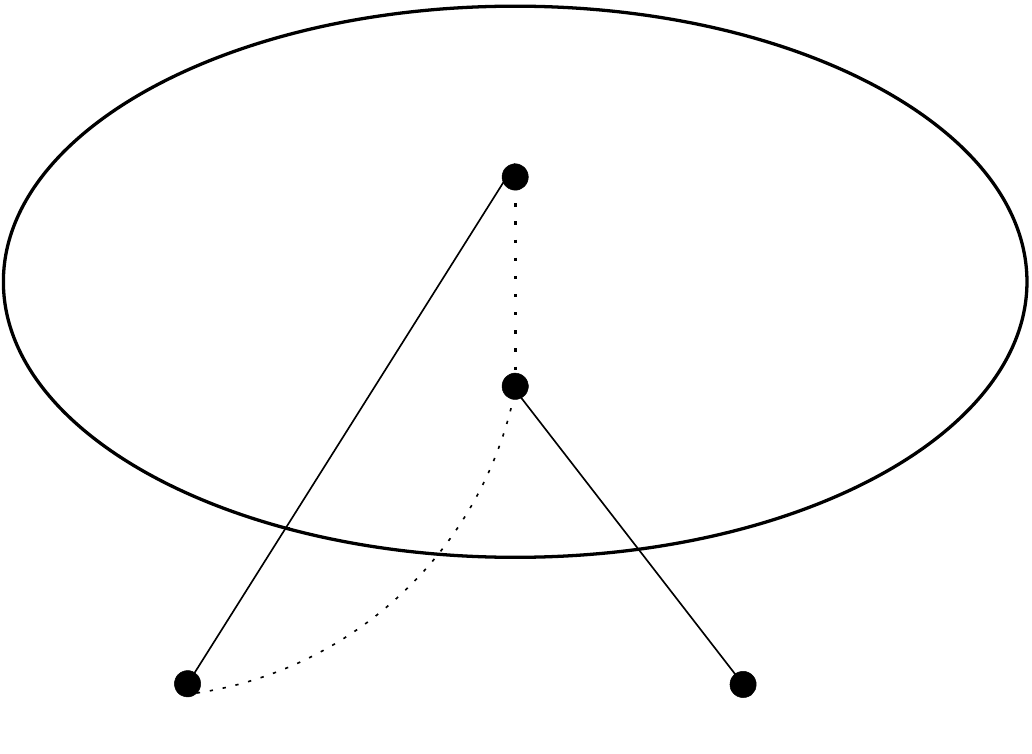_t}
\end{figure}
\\
\\
Let $\gamma,\tilde{\gamma}\in\Gamma$ be transport geodesics starting in $(p,x)$ and $(p,\tilde{x})$, respectively. For the moment we are only concerned with $\gamma=(\alpha,\beta)$. It connects $(p,x)$
and $(q,y)$, and since it passes through $X$, by Proposition \ref{singularity} it decomposes into
$\gamma|_{[0,\tau)}=(\alpha_0,x)$, $\gamma(\tau)=s\in X$ and $\gamma|_{(\tau,1]}=(\alpha_1,y)$ where $x,y\in F$
such that $|x,y|=\pi$. 
\\
\\
We deduce an estimate for $|(p,\tilde{x}),(q,y)|$. 
By Lemma \ref{lemmaohta}  there exists a geodesic from $x$ to $y$ passing throug $\tilde{x}$. So by Theorem 
\ref{fundamentaltheorem} it is enough to consider
$B\times_f \left[0,\pi\right]$ instead of $C$. We have $x=0$ and $y=\pi$. $(\alpha_0,\tilde{x})$ is a minimizer
between $(p,\tilde{x})$ and $s$ and especially $|s,(p,\tilde{x})|=|s,(p,x)|$.
\\
\\
We will essentially use a tool introduced in the proof of Proposition 7.1 in \cite{albi}. There the authors define a 
nonexpanding map $\Psi$ from a section of the constant curvature space $S^3_K$ into $B\times_f\left[0,\pi\right]$. For completeness we repeat its construction:
\\
\\
$B$ is an Alexandrov space, so the following is well-defined. $C_K(\Sigma_s)$ denotes the $K$-cone over 
$\Sigma_s$ where $\Sigma_s$ denotes the space of directions of $s$ in $B$. $s$ is the point where
$\gamma$ intersects $\partial B$. So we can write down the gradient exponential map in $s$
\begin{align*}
 \exp_s:C_K(\Sigma_s)&\rightarrow B\\
(t,\sigma)&\mapsto c_{\sigma}(t)
\end{align*}
where $c_{\sigma}$ denotes the quasi-geodesic that corresponds to $\sigma\in\Sigma_s$. The gradient exponential is a generalisation of the well-known exponential map in Riemannian geometry and
it is non-expanding and isometric along cone radii which correspond to minimizers in $B$. Quasigeodesic were introduce by Alexandrov and studied in detail by Perelman and Petrunin in \cite{pepe}. $\tilde{B}$ denotes 
the doubling of $B$, that is the gluing of two copies of $B$ along their boundaries. By a theorem of Perelman (see \cite{p}) it 
is again an Alexandrov space with the same curvature bound. For $s$ the space of direction $\tilde{\Sigma}_s$ in $\tilde{B}$
is simply the doubling of $\Sigma_s$.
\\
\\
We make the following observations. $\alpha_0\star s \star\alpha_1$ has to be a geodesic in $\tilde{B}$ between $p$ and $q$
where $p$ and $q$ lie in different copies of $B$, respectively. Otherwise there would be a shorter curve $\tilde{\alpha}_0\star \tilde{s} \star\tilde{\alpha}_1$
that would also give a shorter path between $(p,x)$ and $(q,y)$ in $C$. We denote by $\alpha_1^+$ and $\alpha_0^-$ the right hand side and the left hand side tangent vector at $s$, respectively.
\\
\\
By reflection at $\partial B$ we get a another curve
 that is again a geodesic. This curve results from $\alpha_0$ and $\alpha_1$ that were interpreted as curves in the other copy of $B$, respectively. Two cases occur. 
\begin{figure}[h!]
\centering
\input{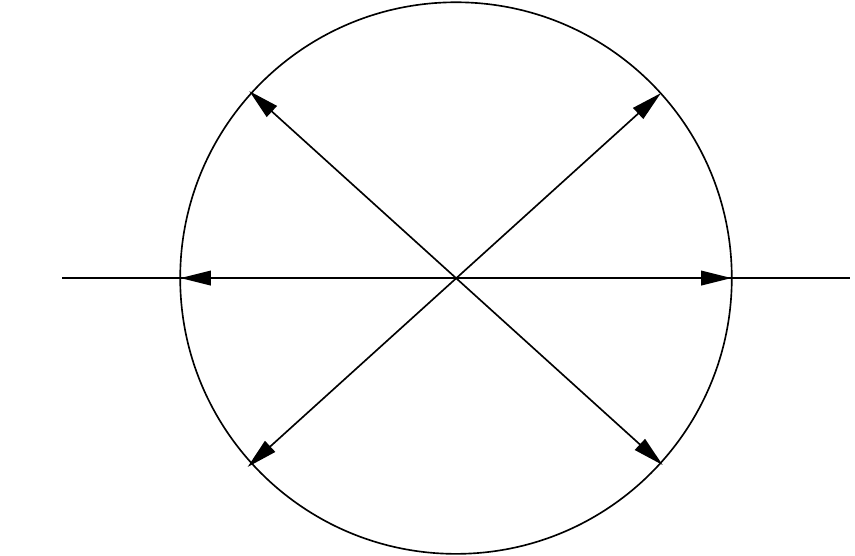_t}
\end{figure}

If $\alpha^+_0(t)\neq\alpha^-_1(t)$ in $\tilde{\Sigma}_s$ then we get two pairs of directions with angle $\pi$. 
The case when $\alpha^+_0(t)=\alpha^-_1(t)$ will be discussed at the end.
Now, in an analog way as one step before, we see that $\tilde{\Sigma}_s$ is a spherical suspension with respect
to each of these pairs and that all 4 directions we consider lie on a geodesic loop $c:[0,2\pi]/_{\left\{0\sim 2\pi\right\}}\rightarrow \tilde{\Sigma}_s$ of length $2\pi$. 
We set $\left\{v_1,v_2\right\}
=\mbox{Im}c\cap \partial{\Sigma_s}$.
Because the second curve was obtained by reflection, clearly we have $|\alpha^+_0,v_1|=|\alpha_0^-,v_1|$ and $|\alpha^+_0,v_2|=|\alpha_0^-,v_2|$ and analogously for $\alpha_1^+$ and $\alpha_1^-$.
So we see that there is an involutive isometry of $\mbox{Im}c$ fixing $\left\{v_1,v_2\right\}$. But then $|v_1,v_2|$ has to be $\pi$. We use a parametrization by arclength such that
$c(0)=v_1$ and $c(\pi)=v_2$ and consider $c|_{[0,\pi]}=c:[0,\pi]\rightarrow \Sigma_s$.
\\
\\
Now consider the space $S^2_K$ of dimension $2$ in $\mathbb{R}^3$ 
and
$S^2_K\cap (\mathbb{R}\times\mathbb{R}_{\geq 0}\times\mathbb{R})=:\hat{S}^2_{K}$. 
We introduce polar coordinates 
\begin{eqnarray*}
\big(\sk(\varphi)\cos(\vartheta),\sk(\varphi)\sin(\vartheta),\ck(\varphi)\big) \mbox{ where } 
\vartheta\in\left[0,\pi\right]\mbox{ and }\varphi\in I_K:=\begin{cases}
                                                     \left[0,\pi/\scriptstyle{\sqrt{K}}\right]&\mbox{if }K>0\\
						     [0,\infty)&\mbox{if }K\leq 0
                                                    \end{cases}
\end{eqnarray*}
and the $K$-cone map 
$$\tilde{\Psi}:\hat{S}^2_{K,}\rightarrow C_K(\Sigma_s)\hspace{5mm}\tilde{\Psi}(\varphi,\vartheta)=(\varphi,c(\vartheta))$$
which is an isometry onto $C_K(\mbox{Im}c\cap\Sigma_s)$.
\\
\\
We consider $\hat{S}^2_{K}\times_{\Phi}\left[0,\pi\right]=:\hat{S}^3_{K}$ where 
$\Phi(\varphi,\vartheta)=\sin\circ d_{\partial\hat{S}^2_K}(\varphi,\vartheta)=\sk\varphi\sin\vartheta$ and 
$\partial\hat{S}^2_K=\left\{(\varphi,\vartheta):\vartheta=0 \mbox{ or }=\pi\right\}\simeq \scriptstyle{\frac{1}{\sqrt{K}}}\textstyle{S^1}$ and define the following map
$$\Psi=\exp_s\circ\tilde{\Psi}\times\mbox{id}_{[0,\pi]}: \hat{S}^2_{K}\times_{\Phi}\left[0,\pi\right]=\hat{S}^3_{K}\rightarrow B\times_f [0,\pi]$$
From the proof of Proposition 7.1 in \cite{albi} we know that 
$\Psi$ is still nonexpanding and an isometry along cone radii which correspond to minimizers in $B$. 
The essential ingredient is $\sk(\varphi)\leq f(\alpha(\varphi))$ for any geodesic $\alpha$ in $B$.
\\
\\
Quite similar as in the case of $K$-cones one can see that the distance on $\hat{S}^2_{K}\times_{\Phi}\left[0,\pi\right]$  is explicetly given by
\begin{eqnarray*}
\ck|(\varphi_0,\vartheta_0,x_0),(\varphi_1,\vartheta_1,x_1)|&=&\ck\varphi_0\ck\varphi_1\\
&&+K\sk\varphi_0\cos\vartheta_0\sk\varphi_1\cos\vartheta_1\nonumber\\
&&+K\sk\varphi_0\sin\vartheta_0\sk\varphi_1\sin\vartheta_1\cos(x_0-x_1)\nonumber\\
&=&\ck\varphi_0\ck\varphi_1\nonumber\\
&&+K\sk\varphi_0\sk\varphi_1\left(\cos\vartheta_0\cos\vartheta_1+\sin\vartheta_0\sin\vartheta_1\cos(x_0-x_1)\right)
\end{eqnarray*}
For $K>0$ we deduce 
the desired estimate
\begin{eqnarray}
 \ck|(p,\tilde{x}),(q,y)|&=&\ck|\Psi((\varphi_0,\vartheta_0,\tilde{x})),\Psi((\varphi_1,\vartheta_1,y))|\nonumber\\
&\geq&\ck|(\varphi_0,\vartheta_0,\tilde{x})),(\varphi_1,\vartheta_1,y)|\nonumber\\
&=&\ck\varphi_0\ck\varphi_1+K\sk\varphi_0\cos\vartheta_0\sk\varphi_1\cos\vartheta_1\nonumber\\
&&+K\sk\varphi_0\sin\vartheta_0\sk\varphi_1\sin\vartheta_1\cos(\tilde{x}-y)\nonumber\\
&\geq&\ck\varphi_0\ck\varphi_1+K\sk\varphi_0\sk\varphi_1(\cos\vartheta_0\cos\vartheta_1-\sin\vartheta_0\sin\vartheta_1)\nonumber\\
&=&\ck\varphi_0\ck\varphi_1+K\sk\varphi_0\sk\varphi_1\cos(\vartheta_0+\vartheta_1)\nonumber\\
&\geq&\ck\varphi_0\ck\varphi_1-K\sk\varphi_0\sk\varphi_1\nonumber\\
&=&\ck(\varphi_0+\varphi_1)\nonumber\\
&=&\ck(|s,(p,\tilde{x})|+|s,(q,y)|)\nonumber\\
&=&\ck(|s,(p,x)|+|s,(q,y)|)=\ck(|(p,x),(q,y)|)\nonumber
\end{eqnarray}
\begin{eqnarray}\label{estimate}
\Longrightarrow \hspace{10pt}|(p,\tilde{x}),(q,y)|&\leq& |(p,x),(q,y)|
\end{eqnarray}
with equality in the second inequality if and only if $|y,\tilde{x}|=\pi$. The case $K\leq 0$ follows in the same way but we have to be aware of reversed inequalities and minus signs that will appear.
We get the same estimate for $(p,x)$ and $(\tilde{q},\tilde{y})$. 
By optimality of the plan we have
\begin{eqnarray}
 |(p,\tilde{x}),(q,y)|^2+|(p,x),(\tilde{q},\tilde{y})|^2&\geq& |(p,x),(q,y)|^2+|(p,\tilde{x}),(\tilde{q},\tilde{y})|^2\nonumber
\end{eqnarray}
and from that we have equality in $(\ref{estimate})$. So we get $|x,\tilde{y}|=\pi$ and $|y,\tilde{x}|=\pi$. But by Ohta's theorem antipodes are unique and thus we get $y=\tilde{y}$ and $x=\tilde{x}$.
\\
\\
The case when $\alpha^+_0(t)=\alpha^-_1(t)$ works as follows. The last identity implies w.l.o.g. $\mbox{Im}\alpha_1\subset\mbox{Im}\alpha_0$. 
We define a map from the $K$-cone into the warped product
\begin{equation*}
\hat{\Psi}:I_K\times_{\sk}\left[0,\pi\right]\rightarrow B\times_f[0,\pi]\mbox{ by }(\varphi,x)\mapsto (\alpha_0(\varphi),x).
\end{equation*}
Again $\hat{\Psi}$ is nonexpanding. By following the lines of Bacher/Sturm in \cite{bastco} we get the same estimate as in (\ref{estimate}).
\end{proof}

\paragraph{Existence of optimal maps}
We have already mentioned that the Finsler structure on $\hat{C}$ is not smooth, or more precisely $\mathcal{F}^2_C$ is $C^1$ but not $C^2$ at any 
$v\in T\hat{B}_p\oplus O_F$. So we cannot apply the classical existence theorem for optimal maps. 
But the special situation of warped products allows to proof the existence of optimal maps by following the lines given in chapter 10 of \cite{viltot}. 
There the cost function comes from a Lagrangian living on a Riemannian manifold. It is easy to see that the Riemannian structure is not so important. 
But the Lagrangian viewpoint fits perfectly well
to our setting if we consider $L:T\hat{C}\rightarrow \mathbb{R}$ with $L(v)=\mathcal{F}_C^2(v)$. The associated action functional is 
$$
\mathcal{A}(\gamma)=\int_0^1\mathcal{F}^2_C(\dot{\gamma}(t))dt
$$
where $\gamma:[0,1]\rightarrow \hat{C}$ is a Lipschitz curve. 
Minimizers of this action functional are just the constant speed geodesics of $\hat{C}$. 
We have the following theorem.
\begin{theorem}\label{optimalmap}
Given $\mu,\nu\in\mathcal{P}^2(\hat{C})$ that are compactly supported and such that $\mu$ is absolutely continuous with respect to $\m_C$. 
Take compact sets $Y\supset\supp\nu$ and $X=\bar{U}$ such that $\supp\mu\subset U$. Then there exists a 
$\frac{1}{2}d^2$-concave function $\phi:X\rightarrow \mathbb{R}_{\geq 0}$ relative to $(X,Y)$ such that the following holds: 
$\pi=(Id_{\hat{C}}, T)_*\mu$ is a unique optimal coupling of $(\mu,\nu)$, where $T:X\rightarrow Y$ is a measurable map and defined $\mu$-almost everywhere 
by $T((p,x))=\gamma^{(p,x)}(1)$ where $\gamma^{(p,x)}$ is a constant-speed geodesic and uniquely determined by $-d\phi_{(p,x)}(\dot{\gamma}^{(p,x)}(0))=\mathcal{F}^2(\dot{\gamma}^{(p,x)}(0))$.
\end{theorem}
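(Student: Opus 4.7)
The plan is to follow the Lagrangian optimal-transport scheme of Chapter 10 of Villani's \emph{Optimal Transport, Old and New}, applied to $L(v)=\mathcal{F}_C^2(v)$ on $T\hat{C}$. The key observations allowing this adaptation are that $L$ is globally $C^1$ on $T\hat{C}$ (the loss of regularity at $T\hat{B}\oplus 0_F$ occurs only at the second-derivative level), that $L$ is continuous, strictly convex, and positively $2$-homogeneous on each tangent fibre, and that its action-minimizing curves are exactly the constant-speed geodesics of $\hat{C}$. These properties are enough to mimic the classical McCann argument without invoking $C^2$ regularity of the cost.

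First I would apply Kantorovich duality for the continuous cost $c=\tfrac{1}{2}d_{\hat{C}}^2$ on the compact product $X\times Y$, producing a $c$-concave potential $\phi\colon X\to\mathbb{R}_{\ge 0}$ and its $c$-transform $\phi^c\colon Y\to\mathbb{R}$ with $\phi(z_0)+\phi^c(z_1)\le c(z_0,z_1)$ on $X\times Y$ and equality on the support of every optimal coupling. Since $c$ is Lipschitz on $X\times Y$, $\phi$ is Lipschitz on $X$, hence differentiable $m_C$-a.e. by Rademacher's theorem in local charts of the smooth manifold $\hat{C}$; absolute continuity of $\mu$ upgrades this to $\mu$-a.e.

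Next, at a $\mu$-typical differentiability point $(p,x)\in\supp\mu$ and any $(q,y)\in Y$ realizing equality in the duality inequality, a first-variation computation along smooth perturbations of the initial point of any minimizing constant-speed geodesic $\gamma$ from $(p,x)$ to $(q,y)$ yields
$$-d\phi_{(p,x)}(w)=\partial_v L(\dot\gamma(0))(w)\qquad \text{for every }w\in T_{(p,x)}\hat{C},$$
an identity that uses only $C^1$ regularity of $L$. Strict fibrewise convexity of $L$ then makes the Legendre map $v\mapsto\partial_v L(v)$ a continuous bijection, hence a homeomorphism by invariance of domain, so $\dot\gamma(0)$ is uniquely determined by $d\phi_{(p,x)}$; Euler's identity for the $2$-homogeneous function $L$ supplies the normalization $-d\phi_{(p,x)}(\dot\gamma(0))=\mathcal{F}_C^2(\dot\gamma(0))$. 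Setting $T(p,x):=\gamma^{(p,x)}(1)$ then yields a $\mu$-a.e.\ well-defined, measurable map, measurability following from Borel measurability of $d\phi$ together with continuity of the geodesic flow in its initial data. Any optimal coupling $\pi$ is concentrated on the $c$-subdifferential of $\phi$; by the previous step, the fibre of that set above $\mu$-a.e.\ $(p,x)$ is the singleton $\{T(p,x)\}$, so $\pi=(\mathrm{Id}_{\hat{C}},T)_*\mu$ is the unique optimal coupling.

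The main obstacle is the Legendre-inversion and first-variation step at singular vectors in $T\hat{B}\oplus 0_F$: one has to verify that strict convexity together with merely $C^1$ regularity of $L$ is enough to invert $\partial_v L$ and to carry out the perturbation argument near those vectors, rather than the $C^2$ regularity underlying the classical McCann statement. This is precisely the Finslerian refinement carried out in Villani's treatment of Tonelli Lagrangians, and it is the one technical point where the warped-product nature of $\hat{C}$ forces a genuine departure from the standard smooth manifold setup.
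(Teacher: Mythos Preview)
Your outline is essentially the paper's own strategy: Kantorovich duality on compact sets, Lipschitz hence a.e.\ differentiable $c$-concave potential, a first-variation identification $-d\phi_{(p,x)}=d_v\mathcal{F}^2_C|_{\dot\gamma(0)}$, and Legendre inversion via strict convexity of $\mathcal{F}^2_C$ on each fibre. Your argument for uniqueness of the optimal coupling (all optimal $\pi$ sit in the $c$-subdifferential of the same fixed $\phi$, whose fibres are singletons) is in fact slightly cleaner than the paper's, which quotes an auxiliary uniqueness result for $d\phi$ from \cite{ohtafinsler1}.

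There is, however, one genuine gap. You correctly flag Legendre inversion and the first-variation formula as the places where $C^2$ is usually used, and you rightly note that $C^1$ plus strict convexity suffices there. But you silently assume a third $C^2$-type fact: that a constant-speed geodesic is uniquely determined by its initial point and initial velocity (you invoke ``continuity of the geodesic flow in its initial data''). For a merely $C^1$ Lagrangian this is \emph{not} automatic---the Euler--Lagrange system need not have unique solutions---and Villani's Tonelli framework does not help, since his Tonelli Lagrangians are assumed $C^2$. The paper closes this gap with a separate proposition exploiting the warped-product structure directly: if the $F$-component of $\dot\gamma(0)$ vanishes, $\gamma$ is a Riemannian geodesic in $B$; otherwise $\beta$ is a pregeodesic in the smooth Finsler manifold $F$ and $\alpha$ solves the ODE $\nabla_{\dot\alpha}\dot\alpha=-\nabla\bigl(\tfrac{c}{2f^2}\bigr)\circ\alpha$ in the Riemannian $B$, both of which are classically well-posed. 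Without this case split your map $T(p,x)=\gamma^{(p,x)}(1)$ is not yet well defined at points where $d\phi_{(p,x)}$ lands in $T\hat{B}\oplus 0_F$, and the measurability argument via ``continuity of the flow'' likewise needs the two regimes treated separately.
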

For completeness we give a self-contained presentation of the proof from \cite{viltot} in the Appendix where our discussion closely follows \cite{mccann} and \cite{ohtafinsler1}.

\subsection{Proof of the Main Theorem}

\begin{theorem}
Let $B$  be a complete, $d$-dimensional space with CBB by $K$ such that $B\backslash\partial B$ is a Riemannian manifold. 
Let $f:B\rightarrow\mathbb{R}_{\geq0}$ be $\mathcal{F}K$-concave and smooth on $B\backslash\partial B$. Assume $\partial B\subseteq f^{-1}(\left\{0\right\})$.
Let $(F,m_F)$ be a weighted, complete Finsler manifold. Let $N\geq 1$ and $K_F\in\mathbb{R}$. 
If $N=1$ and $K_F>0$, we assume that $\diam F\leq \pi/\sqrt{K_F}$. In any case $F$ satisfies $CD((N-1)K_F,N)$ where $K_F\in\mathbb{R}$ such that
\begin{itemize}
\item[1.] If $\partial B=\emptyset$, suppose $K_F\geq Kf^2$.
\item[2.] If $\partial B\neq\emptyset$, suppose $K_F\geq 0$ and $|\nabla f|_p\leq\sqrt{K_F}$ for all $p\in \partial B$.
\end{itemize}
Then the $N$-warped product $B\times^N_f F$ satisfies $CD((N+d-1)K,N+d)$.
\end{theorem}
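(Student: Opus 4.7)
The plan is to reduce the curvature-dimension condition on the full warped product $C = B \times_f^N F$ to a pointwise Ricci lower bound on the regular part $\hat{C} = \hat{B} \times_{\hat{f}} F$, and then to invoke Theorem \ref{singularitytransport} to argue that the singular locus $X = \partial B = f^{-1}(0)$ is irrelevant for optimal transport between $m_C$-absolutely continuous measures.

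First I would verify, via Proposition \ref{bigformula2}, that the weighted Ricci tensor of $\hat{C}$ satisfies
$$\ric^{N+d,\, m_C}_{\hat{C}}(\xi + v) \geq (N+d-1)\, K\, \mathcal{F}_C^2(\xi + v)$$
for every $\xi + v \in T\hat{C}$ with $v \neq 0$. The horizontal piece $\ric_{\hat{B}}(\xi) - N\, \nabla^2 f(\xi)/f(p)$ is at least $(d-1)K|\xi|^2 + NK|\xi|^2 = (N+d-1)K|\xi|^2$, by combining $\ric_{\hat{B}} \geq (d-1)K$ (the Riemannian interior of an Alexandrov space with CBB by $K$ automatically carries this bound) with the $\mathcal{F}K$-concavity $\nabla^2 f \leq -Kf\, g_{\hat{B}}$. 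For the vertical piece one uses $\ric^{N,m_F}_F(v) \geq (N-1) K_F\, \mathcal{F}_F^2(v)$ (from the $CD((N-1)K_F,N)$ hypothesis on $F$ via Theorem \ref{theoremsturmohta}), the trace bound $\Delta f \leq -dKf$, and the pointwise inequality $|\nabla f|^2 + Kf^2 \leq K_F$ on $\hat{B}$ supplied by Proposition \ref{conditions}. Combining these gives
$$(N-1)K_F - f\Delta f - (N-1)|\nabla f|^2 \geq (N-1)Kf^2 + dKf^2 = (N+d-1)Kf^2,$$
which is exactly the bound needed on the coefficient of $\mathcal{F}_F^2(v)$, since $\mathcal{F}_C^2(v) = f^2 \mathcal{F}_F^2(v)$ for the vertical lift.

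Second I would translate this pointwise Ricci lower bound into the pointwise $CD((N+d-1)K, N+d)$-distortion inequality along geodesics in $\hat{C}$. For a transport geodesic $\gamma = (\alpha,\beta)$ with non-constant $\beta$, Theorem \ref{fundamentaltheorem} guarantees that $\dot\beta$ never vanishes, so $\mathcal{F}_C^2$ is smooth along $\dot\gamma$ and the Finslerian Jacobi-field / Bochner argument behind Theorem \ref{theoremsturmohta} applies directly. For a geodesic confined to a leaf $B \times \{x\}$ (purely horizontal motion), the same distortion inequality follows from the classical weighted Riemannian case applied to $(\hat{B}, f^N \vol_{\hat{B}})$, whose $N+d$-Bakry-\'Emery Ricci tensor is bounded below by $(N+d-1)K$ by the same horizontal calculation as above with the vertical terms suppressed.

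Finally, for arbitrary $\mu_0, \mu_1 \in \mathcal{P}_2(C, m_C)$ with bounded supports, Theorem \ref{optimalmap} provides a unique optimal map and a unique dynamical optimal transference plan $\Pi$. When $\partial B = \emptyset$ we have $C = \hat{C}$ and Step~2 closes the argument. When $\partial B \neq \emptyset$, Theorem \ref{singularitytransport} ensures that $\Pi$-almost every transport geodesic stays in $\hat{C}$, so the pointwise $CD$-inequality from Step~2 holds $\Pi$-almost everywhere; integrating against $\Pi$ recovers the integrated form of $CD((N+d-1)K, N+d)$ for $C$ in the sense of Definition \ref{CD}. The principal obstacle is the regularity issue in Step~2: because $\mathcal{F}_C^2$ is only $C^{1+\alpha}$ at purely horizontal tangent vectors (Remark \ref{veryimportantremark}), Theorem \ref{theoremsturmohta} cannot be quoted as a black box on $\hat{C}$, and the split between mixed and purely horizontal geodesics is precisely where the assumption that $B$ itself be Riemannian (rather than merely Alexandrov) becomes essential, explaining why the general Alexandrov analogue remains only conjectural.
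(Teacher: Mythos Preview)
Your proposal is correct and matches the paper's strategy: verify the $(N+d)$-Ricci lower bound on $\hat C$ via Proposition~\ref{bigformula2}, use Theorem~\ref{singularitytransport} to dispose of the singular set, and handle the non-smoothness of $\mathcal F_C^2$ at horizontal vectors by treating purely horizontal and mixed geodesics separately. The one step the paper develops more explicitly than you do is the mechanism for that split: uniqueness of the optimal map (Theorem~\ref{optimalmap}) allows a decomposition $\Pi=\Pi(\Gamma_a)\Pi_a+\Pi(\Gamma_b)\Pi_b$ into horizontal and mixed parts, non-branching forces $(e_t)_*\Pi_a$ and $(e_t)_*\Pi_b$ to have disjoint supports for every $t$, so the R\'enyi entropy decomposes additively and displacement convexity can be checked on each piece independently --- with $\Pi_a$ handled by approximating through products $\Pi_{a,B}^i\otimes\nu_i$ and invoking $CD((N+d-1)K,N+d)$ for $(B,f^N d\vol_B)$, and $\Pi_b$ by Ohta's Finsler calculus after an $\varepsilon$-truncation away from $T\hat B\times 0_F$.
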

\begin{proof}
Let $\partial B\neq\emptyset$. 
For non-constant $f$ we have $K_F>0$. 
Otherwise the warped product is just the ordinary Euclidean product, $K$ has to be nonpositiv and the result is the tensorization property of the $CD$-condition (see \cite{deng}). 
In the case of $N>1$ the curvature-dimension condition for $(F,d_F,m_F)$ implies $((N-1)K_F,N)$-MCP (\cite{rajala2}). 
If $N=1$, then by assumption we have $\diam F\leq \pi/\scriptstyle{\sqrt{K_F}}$.
So in any case Theorem \ref{singularitytransport} yields that positive mass will never transported through the set of singularity points $X$.
So one could think to apply Theorem \ref{theoremsturmohta} to get the result because on $\hat{B}\times^{\scriptscriptstyle{N}}_{\scriptscriptstyle{\hat{f}}} F$ the $N$-Ricci tensor is bounded in the correct way by Proposition
\ref{bigformula} and our assumptions. 

Two problems occur. First, the warped product without its singularity points is not geodesically complete. 
But if we consider some displacement interpolation between bounded and absolutely continuous measures in $\hat{B}\times^{\scriptscriptstyle{N}}_{\scriptscriptstyle{\hat{f}}} F$ then as we have seen 
the transport geodesics do not intersect $X$. So 
by truncation we can find an $\varepsilon$-environment of the singularity set such that the transport takes place in the complement of this environment and the exceptional mass can be chosen arbitrarily small. 
Then in the case where $F$ is Riemannian the calculus 
that was introduced in \cite{CMS} is available like in the complete setting and one gets the convexity of the Jacobian of the optimal map 
along the transport geodesics which leads to the curvature-dimension condition (see also \cite{bastco}). When $\partial B=\emptyset$ this step is redundant because no singularity points appear.

Second, if $F$ is Finslerian, the warped product structure is not
smooth on $T\hat{B}\times O_F$. So we cannot follow the lines of \cite{ohtafinsler1} as we did with \cite{CMS} in the Riemannian case. 
But we know, if $\gamma=(\alpha,\beta)$ is a geodesic in 
$\hat{B}\times^{\scriptscriptstyle{N}}_{\scriptscriptstyle{\hat{f}}} F$ then by Theorem \ref{fundamentaltheorem} $\beta$ is a pre-geodesic. So either $\beta$ is constant and $\alpha$ is a geodesic in $B$, or there exists
a strictly monotone reparametrization $s$ such that $\bar{\beta}=\beta\circ s$ is a constant speed geodesic in $F$. 
We use this fact to circumvent the problem that comes from the non-smoothness. The idea is to split the initial measure of some optimal mass transportation 
in $\hat{B}\times^{\scriptscriptstyle{N}}_{\scriptscriptstyle{\hat{f}}} F$ in
two disjoint parts that will follow one of these two kinds of geodesics either. To do so we need that a point $(p,x)\in\supp\mu_0$ already determines the transport geodesic that starts in $(p,x)$ uniquely. 
But this follows from the existence of an optimal map.

So we proceed as follows. Let $\mu_0$ and 
$\mu_1$ be absolutely continuous probability measure in $\hat{C}$. We assume w.l.o.g. that $\mu_0$ and $\mu_1$ are compactly supported. 
Otherwise, we have to choose compact exhaustions of $\hat{C}\times\hat{C}$ and to consider the restriction of the plan to these compact sets. 
For this we also refer to \cite[Lemma 3.1]{convexfunctionals}.
By Theorem \ref{optimalmap} there is an unique optimal map $T:X\rightarrow Y$ between $\mu_0$ and $\mu_1$. 
So the unique optimal plan is given by $(\mbox{id}, T)_*\mu_0=\pi$ and the associated optimal dynamical plan is given by
$\gamma_*\mu_0=\Pi$ where $\gamma:\supp\mu_0 \rightarrow \mathcal{G}(\hat{C})$ with $(p,x)\mapsto\gamma^{(p,x)}$. 
The geodesic in $\mathcal{P}^2(\hat{C})$ with respect to $L^2$-Wasserstein distance is given by $\mu_t=(\gamma^{(p,x)}_t)_*\mu_0$.
We have
\begin{align*}
\supp\Pi=\underbrace{\big\{\gamma:\dot{\gamma}\in T\hat{B}\times 0_F\big\}}_{=:\Gamma_a}\dot{\cup}\underbrace{\big\{\gamma:\dot{\gamma}\in T\hat{B}\times TF\backslash 0_F\big\}}_{=:\Gamma_b}.
\end{align*}
We set $\Pi(\Gamma_a)^{-1}\Pi|_{\Gamma_a}=:\Pi_a$ and $\Pi(\Gamma_b)^{-1}\Pi|_{\Gamma_b}={\Pi}_b$ that are again optimal dynamical plans. The corresponding $L^2$-Wasserstein geodesics are
$(e_t)_*{\Pi}_a=\mu_{a,t}$ and $(e_t)_*{\Pi}_b=\mu_{b,t}$. They are absolutely continuous with densities $\rho_{a,t}$ and $\rho_{b,t}$ and have disjoint support for any $t\in[0,1]$ 
because of the optimal map and since $\hat{C}$ is non-branching (see \cite[Lemma 2.6]{bast}). 
We have for any $t\in[0,1]$
\begin{align*}
\rho_td\m_C=\mu_t=\Pi({\Gamma}_a){\mu}_{a,t}+\Pi({\Gamma}_b){\mu}_{b,t}=\Pi({\Gamma}_a){\rho}_{a,t}d\m_C+\Pi({\Gamma}_b){\rho}_{b,t}d\m_C.
\end{align*}
So the R\'enyi entropy functional from Definition \ref{CD} splits for any $t\in[0,1]$
\begin{displaymath}
\int_M\rho_t^{1-1/N'}dm_C=\Pi({\Gamma}_a)^{1-1/N'}\!\!\!\int_M\!\!{\rho}_{a,t}^{1-1/N'}dm_C+\Pi({\Gamma}_b)^{1-1/N'}\!\!\!\int_M\!\!{\rho}_{b,t}^{1-1/N'}dm_C
\end{displaymath}
for any $N'\geq N$. So it suffices to show displacement convexity along ${\Pi}_a$ and ${\Pi}_b$ separately.

We begin with $\Pi_a$. We can approximate $\Pi_a$ in $L^2$-Wasserstein distance arbitrarily close by 
$$
\frac{1}{n}\sum_{i=1}^n\Pi_{a,B}^i\otimes\nu_i
$$
where $\Pi_{a,B}^i$ are geometric optimal transference plans in $(B,d_B)$ and $\nu_i$ are disjoint absolutely continuous propability measures in $F$. So it suffices to show displacement convexity along 
$\Pi_{a,B}^i$. But since $B$ has CBB by $K$ and $f$ is $\mathcal{F}K$-concave, $(B,d_B,f^{\scriptscriptstyle{N}}d\vol_B)$ satisfies $CD((N+d-1)K,N+d)$ (see \cite[Theorem 1.7]{stugeo2}) and the desired convexity in $\Pi_{a,B}$ follows at once.

Now consider $\Pi_b$. 
We know a priori that the transport geodesics only follow smooth directions of the Finslerian warped product structure. 
So we can consider $\mathcal{F}^2_C$ restricted to $T\hat{B}\times TF\backslash 0_F$.
We get the exponential map on $T\hat{B}\times TF\backslash 0_F$ and we also can define the Legendre transformation, that yields gradient vector fields. 
Especially, if we consider an optimal transport that follows only smooth direction, the techniques from \cite{ohtafinsler1} can be applied. 
Thus there exists an optimal map $T_b$ of the form $T_b((p,x))=\exp(-\nabla\phi_{(p,x)})$ for some $c$-concave function $\phi$.
To make this more precise we can consider the complement of an $\epsilon$-neighborhood $\mathcal{U}_{\epsilon}$ of $T\hat{B}\times 0_F$ 
and restrict the initial measure $\mu_{b,0}$ of $\Pi_b$ to the set $$U_{\epsilon}=\left\{(p,x)\in\supp\mu_{b,0}:\dot{\gamma}^{(p,x)}(0)\notin\mathcal{U}_{\epsilon}\right\}.$$
$U_{\epsilon}$ is measurable because the mapping $(p,x)\in\mapsto \dot{\gamma}^{(p,x)}$ is measurable.
Again the exceptional mass can be chosen arbitrarily small.
The optimal map $T$, which has been derived in Theorem \ref{optimalmap}, restricted to $U_{\epsilon}$ has to coincide with $T_b$ because 
optimality is stable under restriction and because of uniqueness of optimal maps.
Especially we can deduce $\mu_{b,t}=(T_t)_*\mu_{b,0}$ where $T_t((p,x))=\exp(-t\nabla\phi_{(p,x)})$. 
Again by results from \cite{ohtafinsler1} we know $\phi$ is second order differentiable at least on $U_{\epsilon}$. 
Hence the Jacobian of $T_t$ exists and satisfies because of Proposition 
\ref{bigformula2} and our assumptions the correct convexity condition. Finally one can follow the lines of section 8 in \cite{ohtafinsler1}. \end{proof}
\begin{corollary}\label{col1}
Let $B$  be a complete, $d$-dimensional space with CBB by $K$ 
that is a Riemannian manifold.
Let $f:B\rightarrow\mathbb{R}_{\geq0}$ be $\mathcal{F}K$-concave and smooth.
Assume $\emptyset\neq\partial B\subseteq f^{-1}(\left\{0\right\})$. 
Let $(F,m_F)$ be a weighted, complete Finsler manifold. Let $N>1$. 
Then the following statements are equivalent
\begin{itemize}
\item[(i)]  
$(F,m_F)$ satisfies $CD((N-1)K_F,N)$ with $K_F\geq 0$ and \begin{itemize}
\item[]$|\nabla f|_p\leq\sqrt{K_F}\mbox{ for all }p\in \partial B$.
\end{itemize}
\item[(ii)] The $N$-warped product $B\times^N_f F$ satisfies $CD((N+d-1)K,N+d)$
\end{itemize}
\end{corollary}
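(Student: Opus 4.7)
The forward direction $(i)\Rightarrow(ii)$ is Theorem~\ref{MainTheorem} in case 2 (since $\partial B\neq\emptyset$), so I focus on the converse $(ii)\Rightarrow(i)$. My plan is to reverse the Ricci computation of Proposition~\ref{bigformula2} by localizing the global curvature-dimension condition to the smooth open subset $\hat{C}=\hat{B}\times_{\hat{f}}F$.

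First I would extract from the $CD((N+d-1)K,N+d)$ condition on $C$ a pointwise Finslerian Bakry-Emery bound on $\hat{C}$: for every $(p,x)\in\hat{C}$ and every $(\xi,v)\in T\hat{C}_{(p,x)}$ with $v\neq 0$ (where the warped Finsler structure is smooth),
\begin{equation*}
\ric^{N+d,m_C}_{\hat{C}}(\xi+v)\geq(N+d-1)K\,\mathcal{F}_{\hat{C}}(\xi+v)^2.
\end{equation*}
This is a localized converse of Theorem~\ref{theoremsturmohta}: feed the $CD$ inequality absolutely continuous probability measures $\mu_0^\epsilon,\mu_1^\epsilon\in\mathcal{P}_2(\hat{C},m_C)$ concentrating at $(p,x)$ along a Wasserstein geodesic tangent to $\xi+v$, and take the $\epsilon\to 0$ limit of the R\'enyi convexity estimate. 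Inserting this bound into Proposition~\ref{bigformula2} and setting $\xi=0$ yields, for every $p\in\hat{B}$ and every nonzero $v\in TF$,
\begin{equation*}
\ric^{N,m_F}_F(v)\geq\bigl[(N+d-1)Kf(p)^2+f(p)\Delta f(p)+(N-1)|\nabla f|_p^2\bigr]\,\mathcal{F}_F(v)^2.
\end{equation*}
Letting $p\to p_0\in\partial B$, where $f(p_0)=0$ and smoothness of $f$ annihilates the first two summands, I obtain $\ric^{N,m_F}_F(v)\geq(N-1)|\nabla f|_{p_0}^2\,\mathcal{F}_F(v)^2$ for every nonzero $v$. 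Setting $K_F:=\sup_{p_0\in\partial B}|\nabla f|_{p_0}^2$ then produces $K_F\geq 0$, the boundary estimate $|\nabla f|_{p_0}\leq\sqrt{K_F}$ for all $p_0\in\partial B$, and the uniform Ricci bound $\ric^{N,m_F}_F(v)\geq(N-1)K_F\,\mathcal{F}_F(v)^2$; the forward direction of Theorem~\ref{theoremsturmohta} packages the latter as $CD((N-1)K_F,N)$ for $(F,m_F)$.

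The principal obstacle is the very first step, because $\hat{C}$ is neither geodesically complete nor a Finsler manifold in the strict sense of Definition~\ref{finslermanifold}: the warped structure degenerates on $T\hat{B}\oplus 0_F$, and the singular locus $X$ sits on its boundary inside $C$. Theorem~\ref{singularitytransport} already supplies the dual fact that optimal transports starting from absolutely continuous data in $\hat{C}$ avoid $X$, which makes it plausible that the local Sturm-Ohta characterization can be carried out entirely inside the smooth open subset; making this rigorous---checking that the $CD$ inequality restricted to small balls in $\hat{C}$ genuinely delivers the pointwise Ricci bound at smooth tangent directions $\xi+v$ with $v\neq 0$---is where the real work concentrates.
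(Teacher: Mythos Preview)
Your proposal is correct and follows essentially the same route as the paper: apply Proposition~\ref{bigformula2} to vertical vectors, multiply through by $f(p)^2$, let $p\to\partial B$ so that $f(p)\Delta f(p)\to 0$, and set $K_F=\sup_{\partial B}|\nabla f|^2$. The localization step you flag as the ``principal obstacle'' is in fact left entirely implicit in the paper's proof as well---it simply asserts that Proposition~\ref{bigformula2} \emph{yields} the pointwise inequality~(\ref{second1}) from the $CD$ assumption, without spelling out how the synthetic bound on $C$ descends to the Finslerian Bakry--Emery bound on the smooth open part $\hat C$; so your version is, if anything, more candid about what is being used.
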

\begin{proof}
Only one direction is left.
Assume the $N$-warped product $B\times^N_f F$ satisfies $CD((N+d-1)K,N+d)$. 
Proposition \ref{bigformula2} yields that
\begin{eqnarray}
(N+d-1)K\mathcal{F}_{\hat{B}\times_{\hat{f}}F}^2(\tilde{V})&\leq& 
\ric^{N,m_F}_F(V)-\left(\textstyle{\frac{\Delta f(p)}{f(p)}+(N-1)\frac{|\nabla f|^2_p}{f^2(p)}}\right)\mathcal{F}_{\hat{B}\times_{\hat{f}}F}^2(\tilde{V})\label{second1}
\end{eqnarray}
where $V\in TF_x$ is arbitrary and $\tilde{V}\in T\hat{C}_{(p,x)}$ such that $(\pi_F)_*\tilde{V}=V$. The last inequality is equivalent to
\begin{eqnarray}
(N+d-1)Kf^2(p)\mathcal{F}_{F}^2(V)&\leq& 
\ric^{N,m_F}_F(V)-\left(\Delta f(p)f(p)+(N-1)|\nabla f|^2_p\right)\mathcal{F}_{F}^2(V).
\end{eqnarray}
Now we can choose $p\in B$ independent from $V$ and thus, we let $p$ tend to the non-empty boundary of $B$. Then $\Delta f(p)f(p)$ tends to $0$ because $\Delta f$ is smooth on $B$ (included the boundary) and we get
\begin{eqnarray}\label{est}
(N-1)|\nabla f|^2_p\mathcal{F}_{F}^2({V})\leq \ric^{N,m_F}_F(V)
\end{eqnarray}
for all $p\in \partial B$ and all $V\in TF$. This inequality implies that $(N-1)|\nabla f|^2$ is bounded from above on $\partial B$
by $\textstyle{\mathcal{F}_{F}^2({V})^{-1}}\ric^{N,m_F}_F(V)$ for arbitrary $V\in TF$. So we can set $\sup_{p\in\partial B} |\nabla f|^2_p=K_F<\infty$.
For any $\epsilon>0$ we find $p\in\partial B$ such that $(N-1)|\nabla f|^2_p>(N-1)K_F-\epsilon$. Then we get from (\ref{est})
\begin{eqnarray}
\left((N-1)K_F-\epsilon\right)\mathcal{F}_{F}^2({V})\leq \ric^{N,m_F}_F(V) \nonumber
\end{eqnarray}
and since $\epsilon>0$ is arbitrary we get the desired curvature bound.
\end{proof}

\begin{corollary}
Let $B$  be a complete, $d$-dimensional space with CBB by $K$ such that $B\backslash\partial B$ is a Riemannian manifold. 
Let $f:B\rightarrow\mathbb{R}_{\geq0}$ a function such that it is smooth and satisfies $\nabla^2 f=-Kf$ on $B\backslash\partial B$. Assume $\partial B\subseteq f^{-1}(\left\{0\right\})$. 
Let $(F,m_F)$ be a weighted, complete Finsler manifold. Let $N>1$. 
Then the following statements are equivalent
\begin{itemize}
\item[(i)]  
$(F,m_F)$ satisfies $CD((N-1)K_F,N)$ with $K_F\in\mathbb{R}$ such that
\begin{itemize}
\item[1.] If $\partial B=\emptyset$, suppose $K_F\geq Kf^2$.
\item[2.] If $\partial B\neq\emptyset$, suppose $K_F\geq 0$ and $|\nabla f|_p\leq\sqrt{K_F}$ for all $p\in \partial B$.
\end{itemize}
\item[(ii)] The $N$-warped product $B\times^N_f F$ satisfies $CD((N+d-1)K,N+d)$
\end{itemize}
\end{corollary}
\begin{proof}
Assume the $N$-warped product $B\times^N_f F$ satisfies $CD((N+d-1)K,N+d)$. Like in the proof of previous corollary we can deduce (\ref{second1}). Now we have $\Delta f=-dKf$ on $B\backslash\partial B$ and
we can deduce
\begin{eqnarray*}
 \ric^{N,m_F}_F(V)\geq(N-1)\left(K f^2(p)+|\nabla f|^2_p\right)\mathcal{F}_{F}^2({V})
\end{eqnarray*}
for all $p\in B\backslash \partial B$ and all $V\in TF$. Like in the proof of the previous corollary this inequality implies that $|\nabla f|^2+Kf^2$ is bounded on $B\backslash \partial B$. 
So we can set $\sup_{p\in B\backslash \partial B}|\nabla f|_p^2+Kf^2(p) =:K_F$. (Since $f$ is $\mathcal{F}K$-concave, $|\nabla f|_p^2+Kf^2(p)$ is actually constant on $B$ (see for example \cite{albi}).) This yields 
\begin{eqnarray}
K_F\geq Kf^2(p)&
\hspace{5mm}\forall p\in B.\nonumber
\end{eqnarray}
Then by Proposition \ref{conditions} this is equivalent to the conditions 1. and 2. in the theorem and as in Corollary \ref{col1} the $N$-Ricci tensor of $F$ is bounded by $K_F$.
\end{proof}

\begin{remark}
Like in the theorem of Alexander and Bishop our result can be extended to the case where $B$ satisfies a suitable boundary condition. 

\vspace{3pt}
$(\dagger)$: \textit{$B^{\dagger}$ is the result of gluing two copies of $B$ on the closure of the set of boundary points where $f$ is 
nonvanishing, and $f^{\dagger}: B^{\dagger}\rightarrow \mathbb{R}_{\geq 0}$ is the tautological extension of $f$.
Assume $B^{\dagger}$ has CBB by $K$
and $f^{\dagger}$ is $\mathcal{F}K$-concave.}

\vspace{3pt}
The proof of the main theorem in this situation is exactly the same since $(\dagger)$ implies that 
the warped product $C\backslash \partial C$ is strictly intrinsic.
We do not go into details and refer to \cite{albi}.
\end{remark}
\begin{remark}
If $B=[0,\pi/\sqrt{K}]$ and $f=\sk$ (with appropriate interpretation if $K\leq 0$) and if $\diam F\leq \pi$, the associated warped products are $K$-cones. 
If $F$ is a Riemannian manifold in this setting we get the theorem of Sturm and Bacher from \cite{bastco}. However, if $F$ is Finslerian, the result is new.
\end{remark}
\begin{corollary}
For any real number $N>1$, $CD(N-1,N)$ for a weighted Finsler
manifold is equivalent to $CD(K\cdot N,N + 1)$ for the associated $(K,N)$-cone.
\end{corollary}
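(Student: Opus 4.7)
The plan is to reduce this corollary to the preceding one by specialising its hypothesis $\nabla^2 f = -Kf$ to a one-dimensional base. I would take $B = I_K$, where $I_K = [0,\pi/\sqrt{K}]$ for $K>0$ and $I_K = [0,\infty)$ otherwise, and set $f = \sk$. Then $B$ is a complete one-dimensional Alexandrov space with CBB by $K$; its interior is a Riemannian $1$-manifold; $f$ is smooth on $B\setminus\partial B$ with $f''=-Kf$ (the one-dimensional form of $\nabla^2 f = -Kf$); and $\partial B = f^{-1}(0)$. All hypotheses of the preceding corollary are thus met with $d=1$.

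Next I would identify the $N$-warped product $B\times_f^N F$ with the $(K,N)$-cone $\mbox{Con}_K(F)$. Metrically, this is the content of the remark immediately after the definition of the $K$-cone, provided $\diam F \leq \pi$; on the measure side, both structures carry $\sk^N\, dr \otimes dm_F$ by construction. The diameter bound $\diam F \leq \pi$ is automatic in the forward direction from the generalised Bonnet--Myers theorem applied to $F$ under $CD(N-1,N)$ (with $K_F=1$), while in the reverse direction one simply regards $B\times_f^N F$ as the canonical model of the $(K,N)$-cone. With this identification in place, the parameters $(N+d-1)K$ and $N+d$ in the preceding corollary collapse to $KN$ and $N+1$, matching the target $CD(KN,N+1)$.

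It remains only to pin down the value of $K_F$ that the preceding corollary extracts in this situation. The classical identity $(\sk')^2 + K\sk^2 \equiv 1$, the analogue of $\cos^2 + \sin^2 = 1$, shows both that $K_F \geq Kf^2$ is sharply saturated in the interior of $B$ and that $|\nabla f|_p = 1 = \sqrt{K_F}$ on $\partial B$, so $K_F = 1$ is exactly the value produced by the proof of the preceding corollary. The conclusion $CD((N-1)K_F,N)$ for $F$ therefore reads $CD(N-1,N)$, and is equivalent to $CD(KN,N+1)$ for the cone, as claimed. The main point requiring care, I expect, is this metric identification between the $K$-cone and the warped product: the $K$-cone distance truncates angles in $F$ at $\pi$, and this truncation is inert precisely under the Bonnet--Myers bound $\diam F\leq\pi$ supplied in each direction above.
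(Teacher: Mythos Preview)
Your proposal is correct and matches the paper's approach exactly. The paper gives no explicit proof of this corollary: it is stated as an immediate consequence of the remark just before it (specialising to $B=I_K$, $f=\sk$, so that the warped product is the $K$-cone when $\diam F\leq\pi$) together with the preceding corollary on $\nabla^2 f=-Kf$, which is precisely the derivation you spell out, including the identification $K_F=1$ via $(\sk')^2+K\,\sk^2\equiv 1$.
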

\begin{remark}
Theorem \ref{singularitytransport} is true when $B$ is an Alexandrov space and $F$ some general metric measure space. So it is reasonable to assume that our main result also could hold in a non-smooth context and we conjecture the
following
\end{remark}
\begin{conjecture}\label{conj}
Let $(B, d_B)$  be a complete Alexandrov space with $\dim_B=d$ and let $(F,d_F,m_F)$ be a metric measure space. Let $f:B\rightarrow\mathbb{R}_{\geq0}$ be some continuous function
such that $\partial B\subset f^{-1}(\left\{0\right\})$.
Assume that
$(F,m_F)$ satisfies $CD((N-1)K_F,N)$ 
and $f$ is $\mathcal{F}K$-concave such that
\begin{itemize}
\item[1.] If $\partial B=\emptyset$, suppose $K_F\geq Kf^2$.
\item[2.] If $\partial B \neq\emptyset$, suppose $K_F\geq 0$ and $Df_p\leq\sqrt{K_F}$ for all $p\in X$.
\end{itemize}
Then the $N$-warped product $B\times^N_f F$ satisfies $CD((N+d-1)K,N+d)$
\end{conjecture}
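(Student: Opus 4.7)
The plan is to follow the same high-level strategy as in the proof of Theorem \ref{MainTheorem}: first invoke Theorem \ref{singularitytransport}, which is already stated in the full generality of an Alexandrov base $B$ and a metric measure fibre $F$, to ensure that no mass is transported through the singular set $X=f^{-1}(\{0\})$, and then verify the curvature-dimension inequality for transports supported in the regular part $\hat{C}=\hat{B}\times_{\hat{f}}F$. Since $CD((N-1)K_F,N)$ implies $((N-1)K_F,N)$-MCP for $N>1$ (with the diameter bound handling $N=1$), this first step carries over verbatim, and reduces the problem to establishing displacement convexity of the R\'enyi entropy for couplings between absolutely continuous measures supported in $\hat{C}$.

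Next, I would try to split an optimal dynamical plan into a ``horizontal'' piece $\Pi_a$ whose $F$-projection is constant and a ``mixed'' piece $\Pi_b$ with nontrivial $F$-motion, in direct analogy with the decomposition used in the proof of Theorem \ref{MainTheorem}. The horizontal piece can be handled purely on the base: after approximating it by plans of the form $\frac{1}{n}\sum_i \Pi_{a,B}^{i}\otimes\nu_i$ with $\nu_i$ disjoint absolutely continuous probability measures on $F$, displacement convexity reduces to the $CD((N+d-1)K,N+d)$ condition for $(B,d_B,f^N\,dm_B)$, which follows from $\mathcal{F}K$-concavity of $f$ and $\mathrm{CBB}$ by $K$ via \cite[Theorem 1.7]{stugeo2}. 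This step does \emph{not} require any smoothness and so goes through for general Alexandrov $B$ without essential change. The ``main theorem'' content of the conjecture is therefore concentrated in $\Pi_b$.

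The real obstacle is precisely the mixed piece $\Pi_b$. In Theorem \ref{MainTheorem} this was handled via the pointwise $N$-Ricci formula of Proposition \ref{bigformula2} together with existence of optimal maps and second-order regularity of $c$-concave potentials (Theorem \ref{optimalmap}), and none of these tools is available when neither factor carries a smooth structure. One plausible route is approximation: replace $B$ by a sequence of Alexandrov spaces that are Riemannian manifolds off a smaller boundary (for example via Perelman-type smoothings) and $F$ by weighted Finsler manifolds satisfying the same $CD((N-1)K_F,N)$ bound, apply Theorem \ref{MainTheorem} to the approximants with a compatibly chosen $\mathcal{F}K$-concave warping function, and conclude by stability of $CD$ under measured Gromov--Hausdorff convergence. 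The hard part here is producing such approximations with uniform control on the warping function and on the boundary vanishing $\partial B\subset f^{-1}(\{0\})$; the existence of $mGH$-approximations of a general $CD((N-1)K_F,N)$-space by Finsler manifolds is itself open.

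An alternative and arguably more promising strategy is to bypass Proposition \ref{bigformula2} entirely by using a one-dimensional needle decomposition on $\hat{C}$ in the spirit of Cavalletti--Mondino, reducing the global CD inequality to a $CD(K,N+d)$-density inequality along each transport ray. Each such ray has the structure described by Theorem \ref{fundamentaltheorem}, so its associated one-dimensional Jacobian satisfies an explicit ODE governed by $K_F$, $K$, $f$ and $\hat f$; the required concavity estimate would then follow from the $\mathcal{F}K$-concavity of $f$ combined with a synthetic Bishop--Gromov comparison in the fibre. This route avoids optimal maps and smoothness, but it demands a synthetic analogue of the O'Neill-type Jacobi-field identity underlying \eqref{oneill}--\eqref{equation} that is intrinsic to the warped distortion coefficients. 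In my view this missing ``synthetic Jacobi calculus for warped products'' is the decisive obstacle, and it is the reason the statement currently stands only as Conjecture \ref{conj}.
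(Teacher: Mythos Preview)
The statement you were asked to prove is Conjecture~\ref{conj}, and the paper does \emph{not} contain a proof of it; it is explicitly presented as an open conjecture (see the remark immediately preceding it: ``we are not able to prove it yet, we conjecture the following''). So there is no ``paper's own proof'' to compare against.

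Your proposal correctly identifies this situation and, rather than fabricating a proof, analyses which parts of the argument for Theorem~\ref{MainTheorem} would extend and which would not. Your diagnosis matches the paper's own assessment: Theorem~\ref{singularitytransport} is already available in the generality of an Alexandrov base and a metric measure space fibre with MCP, and the horizontal piece $\Pi_a$ reduces to a $CD$ bound on the weighted base $(B,d_B,f^N\,d\mathcal{H}^d_B)$ via $\mathcal{F}K$-concavity, which does not require smoothness. The genuine obstruction you locate---the mixed piece $\Pi_b$, for which the paper relies on Proposition~\ref{bigformula2}, existence of optimal maps, and second-order differentiability of $c$-concave potentials---is exactly the part that fails without smooth structure on the fibre and regularity of the base off the boundary.

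Your two suggested routes (approximation by smooth objects plus stability under measured Gromov--Hausdorff convergence, and a needle/localization approach replacing the pointwise Ricci calculation by one-dimensional density inequalities along transport rays) are reasonable research strategies, but as you yourself acknowledge, neither is presently a proof: the approximation route requires smooth $mGH$-approximants of a general $CD$ space, which is open, and the localization route requires a synthetic version of the warped Jacobi comparison underlying formula~(\ref{oneill}), which is not established. In short, your write-up is an honest and accurate discussion of the state of affairs, not a proof---and that is the correct outcome here.
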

\begin{remark}
In \cite{bastco} there is an example where the euclidean cone over some Riemannian manifold 
with $CD(N-1,N)$ produces a metric measure space satisfying $CD(0,N+1)$ but that is not an Alexandrov space with curvature bounded from below. 
They consider $F=\scriptstyle{\frac{1}{\sqrt{3}}} S^2\times \scriptstyle{\frac{1}{\sqrt{3}}} S^2$ which satisfies $CD(3,4)$ but has sectional curvature $0$ for planes spanned by vectors that lie in different spheres. 
Then the sectional curvature bound for the cone explodes when one gets nearer and nearer to the apex.
For general warped products the same phenomenon occurs what can 
be seen at once from the formula of sectional curvature for warped products. Choose any closed $n$-dimensional Riemannian manifold
with Ricci curvature bounded from below by $(n-1)K_F$ and with sectional curvature $K_F(V_x,W_x)=0$ for some vectors $V_x$ and $W_x$ in $TF|_x$
(for example choose $\lambda S^m\times \lambda S^m$ where $m+m=n$ and $\lambda$ is an appropriate scaling factor, that produces the Ricci curvature bound $(n-1)K_F$). 
Let $B$ be a Riemannian manifold with boundary and sectional curvature bigger than $K\in\mathbb{R}$ and $f$ is 
$\mathcal{F}K$-concave and satisfies the assumption of the theorem. (for example choose $B$ as the upper
hemisphere of $S^d$ and $f$ as the first nontrivial eigenfunction of the laplacian of this sphere. Especially $f$ vanishes at the boundary of $B$ and $|\nabla f|_{\partial B}=1$. See \cite{chaeig}). The
sectional curvature of the plane $\Pi_{(p,x)}$ spanned by vectors $(X_p,V_x),(Y_p,W_x)$ in $T(B\times_f F)_{(p,x)}$ is
\begin{eqnarray*}
K(\Pi_{(p,x)})
&=&K_B(X_p,Y_p)|X_p|^2|Y_p|^2-f(p)\left[|W_x|^2\nabla^2f(X_p,X_p)+|V_x|^2\nabla^2f(Y_p,Y_p)\right]\\
&&+\frac{1}{f^2(p)}\left[K_F(V_x,W_x)-|\nabla f_p|^2\right]|\tilde{V}_x|^2|\tilde{W}_x|^2\\
&=&K_B(X_p,Y_p)|X_p|^2|Y_p|^2-f(p)\left[|W_x|^2\nabla^2f(X_p,X_p)+|V_x|^2\nabla^2f(Y_p,Y_p)\right]\\
&&-\frac{1}{f^2(p)}|\nabla f_p|^2|\tilde{V}_x|^2|\tilde{W}_x|^2.
\end{eqnarray*}
Hence
the sectional curvature of planes $\Pi_{(p_n,x)}\subset T(\hat{B}\times^{\scriptscriptstyle{N}}_{\scriptscriptstyle{\hat{f}}} F)_{(p_n,x)}$ as above explodes to $-\infty$ if we choose a sequences $(X_{p_n})$ and $(Y_{p_n})$ such that $p_n$ tends to vanishing points of $f$. On the other hand the Ricci curvature is still bounded
by $0$ by formula (\ref{oneill}). Especially there is also no upper bound for the sectional curvature. 
\end{remark}
Another application of Theorem \ref{singularitytransport} is the following corollary which modifies a theorem by Lott that was proven in \cite{lobaem}.
\begin{corollary}\label{cor}
 Let $(B,g)$ be a compact, $n$-dimensional Riemannian manifold with distance function $d_B$ and with CBB by $K$. Let $f: B\rightarrow \mathbb{R}_{\geq 0}$ 
be a smooth and $\mathcal{F}K$-concave function. Let $N\in\mathbb{N}$ such that $N\geq n$ and set $q=N-n$.
Assume
\begin{align}\label{assumption}
|\nabla f|^2_p\leq K_F\hspace{0.3cm}\forall p\in\partial\supp f.
\end{align}
Then $(\supp f,d_B,f^q d\vol_B)$ is the measured Gromov-Hausdorff limit of a sequence of compact geodesic spaces $(M_i,d_i)$ of Hausdorff dimension $N$ satisfying $CD((N-1)K,N)$.
\end{corollary}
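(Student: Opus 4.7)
The plan is to realize $(\supp f, d_B, f^q d\vol_B)$ as the measured Gromov-Hausdorff limit of warped products $M_i := B \times^{q}_{f} F_i$, where $F_i$ is a sequence of $q$-dimensional round spheres whose diameters shrink to zero. Concretely, for $q \geq 1$ let $F_i$ be the sphere of radius $1/\sqrt{K_i}$ with $K_i \to \infty$ (so $\ric_{F_i} = (q-1)K_i$ and $\diam F_i = \pi/\sqrt{K_i} \to 0$), equipped with its normalized Riemannian probability measure $m_{F_i}$; then $F_i$ satisfies $CD((q-1)K_i, q)$. The case $q = 0$ is trivial. Form $M_i$ with distance $d_i$ and reference measure $m_{M_i} = f^q d\vol_B \otimes m_{F_i}$ as in Definition \ref{nwarped}.

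First I would verify the hypotheses of Theorem \ref{MainTheorem} for each $M_i$ with $i$ large. Since $B$ is a compact Riemannian manifold without boundary while the effective base is $\tilde B := \supp f$, one invokes the extension of the main theorem under the boundary condition $(\dagger)$: form the double $\tilde B^{\dagger}$ along $\partial \tilde B \subseteq f^{-1}(\{0\})$, which by Alexander-Bishop has CBB by $K$ and supports the $\mathcal{F}K$-concave tautological extension of $f$. Since $|\nabla f|^2_p \leq K_F$ on $\partial \tilde B$ by hypothesis and $K_i \to \infty$, condition~2 of the theorem holds for $i$ large, so each $M_i$ satisfies $CD((n+q-1)K, n+q) = CD((N-1)K, N)$. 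Each $M_i$ is compact and strictly intrinsic, hence geodesic, with Hausdorff dimension $N$ since its regular part is an open $N$-dimensional Riemannian manifold and the singular set has lower dimension.

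It remains to show mGH convergence $M_i \to (\supp f, d_B, f^q d\vol_B)$. Because $m_{F_i}$ is a probability measure, the projection $\pi_B \colon M_i \to \supp f$ (well-defined on equivalence classes since $\{p\} \times F_i$ collapses whenever $f(p) = 0$) satisfies $(\pi_B)_* m_{M_i} = f^q d\vol_B$ for every $i$. From the warped-product length formula (\ref{length}) and a direct triangle-inequality estimate one obtains
\begin{displaymath}
d_B(p,p') \leq d_i((p,x),(p',x')) \leq d_B(p,p') + (\sup\nolimits_B f)\,\diam F_i,
\end{displaymath}
so $\pi_B$ is an $\varepsilon_i$-isometry with $\varepsilon_i \to 0$, and the mGH convergence follows by standard arguments. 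The main technical point is verifying that $\tilde B^{\dagger}$ actually meets the hypotheses of Theorem \ref{MainTheorem} via $(\dagger)$; once that is in place, the rescaling and measure-normalization estimates are routine.
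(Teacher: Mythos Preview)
Your approach is essentially the paper's: exhibit the target as the collapsed mGH limit of $q$-warped products of $B$ over round $q$-spheres, invoking Theorem~\ref{MainTheorem} for the curvature--dimension bound on each term, and then let the fiber shrink. The only cosmetic difference is that the paper rescales the warping function to $g_i = \tfrac{1}{i} f$ and takes the fiber $\tfrac{i}{\sqrt{K_F}} S^q$, whereas you keep $f$ fixed and send the sphere radius to zero; your explicit measure normalization and $\varepsilon$-isometry estimate flesh out what the paper states in one line, and your detour through the boundary condition $(\dagger)$ is harmless but not needed, since Proposition~\ref{boundary} already gives $\supp f = B$.
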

\begin{proof} 
Consider the $q$-warped product $M_i=B\times_{g_i}\textstyle{\frac{i}{\sqrt{K_F}}}S^q$ where $g_i=\frac{1}{i}f$. 
The assumption (\ref{assumption}) implies 
$$
|\nabla g_i|^2_p\leq \frac{K_F}{i^2}\hspace{0.3cm}\forall p\in\partial\supp f.
$$
Then by our main theorem $M_i$ satisfies $CD((N-1)K, N)$ for any $i$ and $(M_i)_i$ converges to $(\supp f,d_B,f^q d\vol_B)$ in measured Gromov-Hausdorff sense for $i\rightarrow 0$. 
\end{proof}

\begin{remark}
In (\ref{bigformula}) and (\ref{bigformula2}) there appears
$
\ric_{\hat{B}}-\textstyle{\frac{N}{f}\nabla^2f}
$
which is actually the $N+d$-Ricci tensor for the weighted Riemannian manifold $(B,g_B,f^{\scriptscriptstyle{N}}d\vol_B)$ (compare with Definition \ref{definitionNricci}). 
So one could think that we could weaken the assumption on $B$ and $f$ from bounds on sectional curvature 
and $\mathcal{F}K$-concavity respectively to a curvature-dimension condition for $(B,g_B,f^{\scriptscriptstyle{N}}d\vol_B)$. 
On the other hand, the proof of Theorem \ref{singularitytransport} 
needs the curvature bound $K$ for $B$ in the sense of Alexandrov and the $\mathcal{F}K$-concavity of $f$. More precisely, for the definition of the non-expanding map $\Psi$ the curvature bound is essential 
because it guarantees the existence of quasi-geodesics, tangent $K$-cones $C_p^K$ at $p\in\partial B$ and non-expanding exponential maps $\exp_p$. 
Then the contraction property of $\Psi$ comes from the $\mathcal{F}K$-concavity of $f$ and the non-expanding property of $\exp_p$.
And also the proof of Proposition \ref{singularity} relies on $\Psi$ and its properties (\cite{albi}).
So it is convenient to assume that $B$ is an Alexandrov space with CBB by $K$ and $f$ is $\mathcal{F}K$-concave. 
\end{remark}
We have the Conjecture \ref{conj} but at the moment we are not able to proof it. But one could ask if it is true when $F$ is a warped product itself 
and satisfies a curvature-dimension bound in the sense of our main theorem. 
In this situation $F$ would not be a manifold and singularities would occur. However the proof of the following corollary shows that an iterated warped product is essentially again a simple warped product.
\begin{corollary}Let $B_2$ be complete, $d_2$-dimensional
space with CBB by $K_2$ such that $B_2\backslash\partial B_2$ is a Riemannian manifold
and let $f_2:B_2\rightarrow\mathbb{R}_{\geq0}$ be $\mathcal{F}K_2$-concave and smooth on $B_2\backslash\partial B_2$. 
Assume $\emptyset\neq\partial B_2\subseteq f_2^{-1}(\left\{0\right\})$.  Let $B_1$ be complete, $d_1$-dimensional
Riemannian manifold with CBB by $K_1$ where $K_1\geq 0$ such that
\begin{itemize}
\item[] $|\nabla f_2|_p\leq\sqrt{K_1}$ for all $p\in \partial B_2$.
\end{itemize}
Let $f_1:B_1\rightarrow\mathbb{R}_{\geq0}$ be a smooth and $\mathcal{F}K_1$-concave.
Assume $\emptyset\neq\partial B_1\subseteq f^{-1}_1(\left\{0\right\})$.
Let $(F,m_F)$ be a weighted, complete Finsler manifold. Let $N\geq 1$ and $K_F\in\mathbb{R}$. 
If $N=1$ and $K_F>0$, we assume that $\diam F\leq \pi/\sqrt{K_F}$. In any case $F$ satisfies $CD((N-1)K_F,N)$ where $K_F\geq 0$ such that
\begin{itemize}
\item[] $|\nabla f_1|_p\leq\sqrt{K_F}$ for all $p\in \partial B_1$.
\end{itemize}
Then the $N+d_1$-warped product $B_2\times^{N+d_1}_{f_2}\big(B_1\times^N_{f_1} F\big)$ satisfies $CD((N+d_1+d_2-1)K_2,N+d_1+d_2)$.
\end{corollary}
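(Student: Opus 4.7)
The approach is to reduce the iterated warped product to a single warped product over a larger Alexandrov base and then invoke the main theorem once. The starting identification of metric measure spaces is
\[
B_2\times^{N+d_1}_{f_2}\bigl(B_1\times^N_{f_1} F\bigr)\ =\ \bigl(B_2\times_{f_2} B_1\bigr)\times^N_{f_2 f_1} F,
\]
which one reads off from the coincidence of the Riemannian tensors on the regular part (both equal $g_{B_2}+f_2^2 g_{B_1}+f_2^2 f_1^2 g_F$) and of the reference measures, since
\[
f_2^{N+d_1} f_1^N\,d\vol_{B_2}\,d\vol_{B_1}\,dm_F\ =\ (f_2 f_1)^N\bigl(f_2^{d_1}\,d\vol_{B_2}\,d\vol_{B_1}\bigr)\,dm_F,
\]
and the parenthesized factor is precisely the Riemannian volume of $B_2\times_{f_2} B_1$. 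Completing with respect to the intrinsic distance on either side yields the same metric measure space.

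Set $\tilde B:=B_2\times_{f_2}B_1$ and $\tilde f:=f_2\cdot f_1$. By Theorem \ref{albi} of Alexander and Bishop (applied to $(B_2,B_1,f_2,K_2,K_1)$; note $\partial B_2\subseteq f_2^{-1}(0)$ makes the boundary condition $(\dagger)$ automatic), $\tilde B$ is a complete $(d_1+d_2)$-dimensional Alexandrov space with CBB by $K_2$, whose regular part is the Riemannian warped product $(B_2\setminus\partial B_2)\times_{f_2} B_1$. The function $\tilde f$ is smooth on $\tilde B\setminus\partial\tilde B$ and vanishes on $\partial\tilde B\subseteq(\partial B_2\times B_1)\cup(B_2\times\partial B_1)=\tilde f^{-1}(0)$.

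The key computation is to verify that $\tilde f$ is $\mathcal{F}K_2$-concave on $\tilde B$ and that $|\nabla\tilde f|^2\le K_F$ on $\partial\tilde B$. Using the connection formulas for warped products recalled just before Proposition \ref{bigformula}, a direct Hessian computation on the regular part gives, for horizontal $\tilde X$ and vertical $\tilde V$ of vector fields $X$ on $B_2$ and $V$ on $B_1$:
\begin{align*}
\nabla^2\tilde f(\tilde X,\tilde X)&=f_1\,\nabla^2 f_2(X,X),\\
\nabla^2\tilde f(\tilde V,\tilde V)&=f_2\,\nabla^2 f_1(V,V)+f_2 f_1|\nabla f_2|^2|V|^2_{B_1},\\
\nabla^2\tilde f(\tilde X,\tilde V)&=0.
\end{align*}
Combining the $\mathcal{F}K_2$-concavity of $f_2$ on the first line with the $\mathcal{F}K_1$-concavity of $f_1$ together with the everywhere bound $|\nabla f_2|^2\le K_1-K_2 f_2^2$ (Proposition \ref{conditions} applied to $(f_2,K_1)$ on $B_2$) on the second line, one obtains $\nabla^2\tilde f\le -K_2\tilde f$ as a quadratic form on the regular part of $\tilde B$. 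For the gradient, the analogous computation gives $|\nabla\tilde f|^2_{\tilde B}=f_1^2|\nabla f_2|^2+|\nabla f_1|^2$; this is $\le K_F$ on the stratum $\{f_1=0\}$ of $\partial\tilde B$ by hypothesis, and on the stratum $\{f_2=0\}$ the bound $|\nabla f_2|^2\le K_1$ at $\partial B_2$ together with the pointwise inequality $|\nabla f_1|^2\le K_F-K_1 f_1^2$ (Proposition \ref{conditions} applied to $(f_1,K_F)$ on $B_1$) yields $|\nabla\tilde f|^2\le f_1^2 K_1+K_F-K_1 f_1^2=K_F$.

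All hypotheses of the main theorem now hold for the triple $(\tilde B,\tilde f,F)$ with parameters $K_2,K_F,N$, so the main theorem delivers $CD((N+d_1+d_2-1)K_2,\,N+d_1+d_2)$ for $\tilde B\times^N_{\tilde f}F$, which by the identification of the first paragraph is precisely the claim. The step I expect to demand most care is the global upgrade of $\mathcal{F}K_2$-concavity of $\tilde f$ from the smooth Hessian bound on the interior of $\tilde B$ to the barrier inequality along arbitrary unit-speed geodesics of $\tilde B$, since such geodesics may meet or traverse the singular set $\partial\tilde B$; this should be handled by combining the Hessian bound on the interior with the continuity and vanishing of $\tilde f$ on $\partial\tilde B$, perhaps through quasigeodesic/approximation arguments in the Alexandrov style already present in \cite{albi}.
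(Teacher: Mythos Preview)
Your proposal is correct and follows essentially the same route as the paper. Both arguments hinge on the associativity identity
\[
B_2\times^{N+d_1}_{f_2}\bigl(B_1\times^N_{f_1} F\bigr)\ =\ \bigl(B_2\times_{f_2} B_1\bigr)\times^N_{f_2 f_1} F,
\]
verified via matching length structures and reference measures, followed by a single invocation of the main theorem on the triple $(\tilde B,\tilde f,F)$. Where the paper simply asserts ``we can calculate that $f$ is $\mathcal{F}K_2$-concave and that $|\nabla f|\le\sqrt{K_F}$ on $\partial B$'', you supply the explicit Hessian and gradient computations via the warped-product connection formulas and the pointwise bounds of Proposition~\ref{conditions}; these calculations are correct. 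The subtlety you flag at the end --- upgrading the smooth Hessian bound on the interior to barrier $\mathcal{F}K_2$-concavity along arbitrary geodesics of the Alexandrov space $\tilde B$ --- is genuine and is likewise glossed over in the paper; it is handled by continuity of $\tilde f$ together with $\tilde f\equiv 0$ on $\partial\tilde B$, so that the comparison inequality along a geodesic touching the boundary follows by approximation from the interior.
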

\begin{proof}
First we see that 
\begin{align*}
B_2\times^{N+d_1}_{f_2}\big(B_1\times^N_{f_1} F\big)=\big(B_2\times^{d_1}_{f_2} B_1\big)\times^N_{f_2f_1} F.
\end{align*} as metric measure spaces.
This comes from the fact that the warped product measure in both cases is
\begin{align*}
f_2^{N+d_1}d\vol_{B_2}\otimes \big(f_1^Nd\vol_{B_1}\otimes dm_F\big)=(f_1f_2)^N\big(f_2^{d_1}d\vol_{B_2}\otimes d\vol_{B_1}\big)\otimes dm_F
\end{align*}
and the warped product metrics coincide because in both cases the length structure is given by
\begin{align*}
\mbox{L}(\gamma)=\int_0^1\sqrt{|\dot{\alpha_2}(t)|^2+f_2^2\circ\alpha_2(t)|\dot{\alpha}(t)_1|^2+f_2^2\circ\alpha_2(t)f_1^2\circ\alpha_1(t)\mathcal{F}_F^2(\dot{\beta}(t))}dt.
\end{align*}
Hence it is enough to check that $(B_2\times^{d_1}_{f_2} B_1)\times^N_{f_2f_1} F$ satisfies the required curvature-dimension bound. 

We know by Theorem \ref{albi} that $B_2\times^{}_{\scriptscriptstyle{f_2}} B_1=:B$ is a space with CBB by $K_2$. It is easy to see that its boundary is $B_2\times^{}_{\scriptscriptstyle{f_2}} \partial B_1=\partial B$ 
and that the singularity points $\partial B_2$ are a subset of $\partial B$. It follows that $B\backslash\partial B$ is a Riemannian manifold.
Then we know that if $(p_2,p_1)\in\partial B$, we have $f_2(p_2)f_1(p_1)=0$ and so $\partial B\subset f^{-1}(\left\{0\right\})$ where $f=f_2f_1$. Then we can calculate that $f$ is $\mathcal{F}K_2$-concave 
and that it satisfies 
\begin{align*}
|\nabla f|_{(p_2,p_1)}\leq\sqrt{K_F}\hspace{5pt}\mbox{ for all }(p_2,p_1)\in \partial B,
\end{align*}
where the modulus of the gradient is taken with respect to the warped product metric of $B_2\times^{d_1}_{f_2} B_1$. 
Thus the assumptions of Theorem \ref{MainTheorem} are fulfilled and the result follows.
\end{proof}

\section{Appendix}

\paragraph{Existence of optimal maps}
\begin{proposition}\label{geod}
For any $(p,x)\in\hat{C}$ and any $\xi+v\in T\hat{C}_{(p,x)}$ there is a unique geodesic $\gamma$ starting in $(p,x)$ with 
initial tangent vector $\dot{\gamma}(0)=\xi+v$. 
\end{proposition}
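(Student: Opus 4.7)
The plan is to split into two cases according to whether the vertical component $v\in TF_x$ vanishes, since the obstacle to a direct ODE argument is precisely the non-smoothness of $\mathcal{F}_{\hat{C}}^2$ along $T\hat{B}\times 0_F$ pointed out in Remark \ref{veryimportantremark}.

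First I would handle the case $v\neq 0$. On the open set $T\hat{B}\times(TF\setminus 0_F)$ the warped product Finsler structure $\mathcal{F}_{\hat{C}}$ is smooth and strongly convex (the fundamental tensor computed at the end of Section \ref{FinslerManifolds} is positive definite), so the associated geodesic spray is a smooth vector field on a neighbourhood of $\xi+v$ in $T\hat{C}\setminus(T\hat{B}\times 0_F)$. Standard ODE theory then yields a unique local solution $\gamma$ with $\dot\gamma(0)=\xi+v$; this solution is a geodesic of $\mathcal{F}_{\hat{C}}$ by the classical equivalence between the spray equation and the (constant-speed) minimising property for smooth Finsler structures. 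To confirm it remains in the smooth stratum I would invoke Theorem \ref{fundamentaltheorem}: the conserved quantity $c=|\dot\beta|^2(f\circ\alpha)^4$ equals $\mathcal{F}_F(v)^2 f(p)^4>0$, so $\dot\beta(t)$ never vanishes while the geodesic is defined; uniqueness among geodesics is inherited from uniqueness of ODE solutions on the smooth locus.

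Next, suppose $v=0$. Here I would \emph{not} attempt an ODE at $\xi+0$; instead I would use Theorem \ref{fundamentaltheorem} structurally. Any geodesic $\gamma=(\alpha,\beta)$ with $\dot\beta(0)=0$ has the constant $c=|\dot\beta|^2(f\circ\alpha)^4$ equal to $0$ at $t=0$ (recall $f(p)>0$ on $\hat C$), so $c\equiv 0$; hence $\beta$ is constant and $\alpha$ is a geodesic of the Riemannian manifold $\hat{B}$. Existence and uniqueness of $\alpha$ with $\dot\alpha(0)=\xi$ is then the usual Riemannian fact, and taking $\gamma(t)=(\alpha(t),x)$ produces the required geodesic in $\hat{C}$.

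The main obstacle, as already signalled, is the $C^{1,\alpha}$ (but not $C^2$) regularity of $\mathcal{F}_{\hat{C}}^2$ on $T\hat{B}\times 0_F$: one cannot simply quote a Finsler existence-uniqueness theorem for all $\xi+v$. The key observation that dissolves this is Alexander--Bishop's Theorem \ref{fundamentaltheorem}: the first integral $|\dot\beta|^2(f\circ\alpha)^4=c$ already classifies trajectories with $v=0$ as horizontal lifts of $B$-geodesics, so the singular direction is handled by a purely metric argument rather than by the Finsler ODE, and uniqueness follows by combining the two cases (no geodesic with $v\neq 0$ can match initial data in the other class because its vertical speed is non-zero for all time).
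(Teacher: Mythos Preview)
Your proposal is correct and uses the same case split as the paper. The only noteworthy difference is in the $v\neq 0$ case: the paper argues componentwise, using the conserved quantity $c=\mathcal{F}_F^2(\dot\beta)f^4(\alpha)$ from Theorem \ref{fundamentaltheorem} to reduce the equation for $\alpha$ to the Riemannian ODE $\nabla_{\dot\alpha}\dot\alpha=-\nabla(c/2f^2)|_\alpha$ on $\hat B$ (equation (\ref{ODE})) and then invoking uniqueness of geodesics in the smooth Finsler manifold $F$ for $\beta$, whereas you invoke the smooth geodesic spray of $\mathcal{F}_{\hat{C}}$ directly on the open set $T\hat B\times(TF\setminus 0_F)$ and use the conserved quantity only to keep the solution in that set. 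Both arguments are equally valid; yours is slightly more abstract, the paper's makes the decoupled structure of the warped-product geodesic equation more visible.
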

\begin{proof} If $v=0$, $\gamma(t)=(\alpha(t),\beta(0))$ is a geodesic in $B$ 
and hence uniquely determined by $\dot{\alpha}(0)$. Otherwise we have $\mathcal{F}_F^2(\dot{\beta})f^4(\alpha)=const=:c$ (see Theorem \ref{fundamentaltheorem}) and $\alpha$ is determined by 
$$
\nabla_{\dot{\alpha}}\dot{\alpha}=-\nabla\textstyle{\frac{c}{2f^2}}|_{\alpha}
$$
and $\alpha(0)$ and $\dot{\alpha}(0)$. Together with the uniqueness property of geodesics in $F$, the statement follows.
\end{proof}
For the rest of this section $c$ stands for the cost function $c((p,x),(q,y))=\textstyle{\frac{1}{2}}|(p,x),(q,y)|^2=\textstyle{\frac{1}{2}}\inf\mathcal{A}(\gamma)$ where the infimum is taken over 
Lipschitz curves connecting $(p,x)$ and $(q,y)$.
We need some background information on $c$-concave functions where we also refer to \cite{mccann}.
\begin{definition}
Let $X,Y\subset \hat{C}$ be compact. Given an arbitrary function $\phi:X\rightarrow\mathbb{R}\cup \left\{-\infty\right\}$, its $c$-transform $\phi^c:Y\rightarrow\mathbb{R}\cup\left\{-\infty\right\}$ relative to $(X,Y)$ 
is defined by $$\phi^c((q,y)):=\inf_{(p,x)\in X}\left\{c((p,x),(q,y))-\phi((p,x))\right\}.$$
Similar we define the $c$-transform of a function $\psi:Y\rightarrow \mathbb{R}\cup\left\{-\infty\right\}$ relative to $(Y,X)$.
A function $\phi:X\rightarrow\mathbb{R}\cup\left\{-\infty\right\}$ is said to be $c$-concave relative to $(X,Y)$ if it is not 
identical $-\infty$ and if there is a function $\psi:Y\rightarrow\mathbb{R}\cup\left\{\infty\right\}$ such
that $\psi^c=\phi$.
\end{definition}
\begin{lemma}
If $\phi$ is $c$-concave relative to $(X,Y)$, then it is Lipschitz continuous with respect to $|\cdot,\cdot|$ and its Lipschitz constant is bounded above by some constant depending only on $X$ and $Y$.
\end{lemma}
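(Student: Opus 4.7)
The plan is to reduce the claim to the standard fact that the pointwise infimum of a uniformly Lipschitz family of functions is Lipschitz with the same constant. Because $X, Y \subset \hat{C}$ are both compact, the set $X \cup Y$ has finite diameter; set $D := \diam(X \cup Y) < \infty$. The cost is $c((p,x),(q,y)) = \tfrac{1}{2}|(p,x),(q,y)|^2$, so for any $(q,y) \in Y$ and $(p_1,x_1), (p_2,x_2) \in X$ the elementary identity $a^2 - b^2 = (a-b)(a+b)$ combined with the reverse triangle inequality gives
$$\bigl|c((p_1,x_1),(q,y)) - c((p_2,x_2),(q,y))\bigr| \le D \cdot |(p_1,x_1),(p_2,x_2)|.$$
Thus each map $(p,x) \mapsto c((p,x),(q,y)) - \psi(q,y)$ is Lipschitz with constant $D$, independently of $(q,y)$.

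Before taking the infimum I have to verify that $\phi$ is actually real-valued on $X$, since the definition only guarantees $\phi \not\equiv -\infty$. Pick $(p_0,x_0) \in X$ with $\phi(p_0,x_0) \in \mathbb{R}$. By definition of the $c$-transform,
$$\psi(q,y) \le c((p_0,x_0),(q,y)) - \phi(p_0,x_0) \le \tfrac{1}{2}D^2 - \phi(p_0,x_0)$$
for every $(q,y) \in Y$, so $\psi$ is bounded above on $Y$. Since $c \ge 0$, this immediately yields the lower bound $\phi(p,x) \ge -\sup_Y \psi > -\infty$ at every $(p,x) \in X$, while finiteness from above is clear by testing with any single $(q,y)$.

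With $\phi$ finite, the Lipschitz estimate follows in one line. For every $(q,y) \in Y$ the pointwise bound from the first paragraph gives
$$c((p_1,x_1),(q,y)) - \psi(q,y) \le c((p_2,x_2),(q,y)) - \psi(q,y) + D \cdot |(p_1,x_1),(p_2,x_2)|.$$
Taking the infimum in $(q,y)$ on both sides (the extra term is a constant with respect to $(q,y)$) yields $\phi(p_1,x_1) \le \phi(p_2,x_2) + D \cdot |(p_1,x_1),(p_2,x_2)|$, and interchanging the roles of the two points gives the reverse inequality. Hence $\phi$ is Lipschitz with constant $D = \diam(X \cup Y)$, which depends only on the compact sets $X$ and $Y$.

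There is no real obstacle here; the only subtlety is the \emph{a priori} finiteness of $\phi$, which is settled by using one point where $\phi$ is finite to bound $\psi$ from above on the compact set $Y$. The argument does not use any structure of $\hat{C}$ beyond the triangle inequality and compactness, and in particular does not see the warped product singularities.
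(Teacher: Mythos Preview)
Your argument is correct and is exactly the standard one: each function $(p,x)\mapsto c((p,x),(q,y))-\psi(q,y)$ is $D$-Lipschitz with $D=\diam(X\cup Y)$, and the infimum of a uniformly Lipschitz family inherits the same constant. The paper states this lemma without proof (it is a standard fact, cf.\ \cite{mccann}, \cite{viltot}), so there is nothing to compare; your write-up, including the care taken to verify that $\phi$ is real-valued, is a clean self-contained justification.
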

\begin{remark}\label{mb}
Since a $c$-concave function is Lipschitz continuous, it is differentiable almost everywhere. 
We also have that $d\phi: \hat{C}\rightarrow T^*\hat{C}$ is measurable (see \cite[Lemma 4]{mccann}).
\end{remark}
\begin{definition}
Let $M$ be a manifold and $f:M\rightarrow \mathbb{R}$ be a function. A co-vector $\alpha\in T^*M_x$ is called subgradient of $f$ at $x$ if we have
\begin{equation*}
f(\sigma(1))\geq f(\sigma(0))+\alpha(\dot{\sigma}(0))+o(\mathcal{F}(\dot{\sigma}(0))
\end{equation*}
for any geodesic $\sigma:[0,1]\rightarrow M$ with $\sigma(0)=x$. The set of subgradients at $x$ is denoted by $\partial_-^*f(x)$. Analogously we can define the set $\partial_+^*f(x)$ of supergradients at $x$.
\end{definition}
\begin{remark}
If $f$ admits a sub- and supergradient at $x$, it is differentiable at $x$ and $\partial_-^*f(x)=\partial_+^*f(x)=\left\{df_x\right\}$ (\cite[Proposition 10.7]{viltot}).
\end{remark}
\begin{proposition}
Suppose $\gamma:[0,1]\rightarrow \hat{C}$ is a constant speed geodesic joining $(p,x)$ and $(q,y)$. 
Then $f(\cdot)=c(\cdot,(q,y))$ has supergradient $-d_v\mathcal{F}^2_{\hat{C}}|_{\dot{\gamma}(0)}\in T^*\hat{C}_{\gamma(0)}$ at $(p,x)$ where
\begin{align*}
d_v\mathcal{F}^2_{\dot{\gamma}(0)}(w)=\frac{d}{dt}\mathcal{F}^2(\dot{\gamma}(0)+tw)\hspace{5pt}\mbox{ for }w\in T\hat{C}|_{\gamma(0)}
\end{align*}
\end{proposition}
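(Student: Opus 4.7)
The strategy is to exhibit an explicit competitor curve from $\sigma(s)$ to $(q,y)$ obtained as a first-order deformation of $\gamma$, bound $c(\sigma(s),(q,y))$ from above by half of its action, and extract the supergradient via the first variation formula for $\mathcal{A}$. Because $\gamma$ is a minimizing geodesic, the bulk term of the first variation vanishes by Euler--Lagrange, leaving only the boundary contribution at $t=0$, which is precisely the claimed supergradient.

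First I would reduce to the regular case. By Theorem~\ref{fundamentaltheorem}, $\mathcal{F}_F^2(\dot\beta)\cdot f^4\circ\alpha$ is constant along $\gamma$. If this constant is zero, then $\beta$ is constant and $\gamma$ lies in a single leaf $B\times\{x\}$, isometric to $B$; the statement then reduces to the classical McCann supergradient computation on a Riemannian manifold. Otherwise $\dot\gamma(t)$ has nonvanishing $F$-component for every $t\in[0,1]$, so $\dot\gamma$ stays in the open subset of $T\hat C$ where $\mathcal{F}^2_{\hat C}$ is smooth (see Remark~\ref{veryimportantremark}), and the variational calculus below is fully justified in a tubular neighborhood of the image of $\dot\gamma$.

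In this regular case I would build the variation as follows. In local coordinates covering $\gamma([0,1])$, pick a smooth vector field $V(t)$ along $\gamma$ with $V(0)=w:=\dot\sigma(0)$ and $V(1)=0$ (e.g. $V(t):=(1-t)w$), and set
\[
\gamma_s(t):=\gamma(t)+sV(t)+\xi(s)\phi(t),
\]
where $\phi$ is a smooth cutoff with $\phi(0)=1$, $\phi(1)=0$, and $\xi(s):=\sigma(s)-\gamma(0)-sw=o(s)$ is the second-order correction ensuring $\gamma_s(0)=\sigma(s)$ and $\gamma_s(1)=(q,y)$. Since $\gamma_s$ is a Lipschitz competitor, $c(\sigma(s),(q,y))\leq \tfrac12\mathcal{A}(\gamma_s)$, and smoothness of $\mathcal{F}^2$ along $\dot\gamma$ gives the expansion
\[
\mathcal{A}(\gamma_s)=\mathcal{A}(\gamma)+s\!\int_0^1\!\bigl[d_v\mathcal{F}^2|_{\dot\gamma(t)}(\dot V(t))+d_x\mathcal{F}^2|_{\dot\gamma(t)}(V(t))\bigr]\,dt+o(s).
\]
Integration by parts in the first summand together with the Euler--Lagrange equation $\tfrac{d}{dt}\bigl(d_v\mathcal{F}^2|_{\dot\gamma(t)}\bigr)=d_x\mathcal{F}^2|_{\dot\gamma(t)}$ satisfied by the minimizer $\gamma$ cancels the bulk contribution, leaving only the boundary term
\[
\bigl[d_v\mathcal{F}^2|_{\dot\gamma(t)}(V(t))\bigr]_{t=0}^{t=1}=-d_v\mathcal{F}^2|_{\dot\gamma(0)}(w).
\]
Combined with the equality $c((p,x),(q,y))=\tfrac12\mathcal{A}(\gamma)$ this yields the supergradient inequality with the claimed covector (up to the normalization factor implicit in the definition of $d_v\mathcal{F}^2$).

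The principal obstacle is the failure of $\mathcal{F}^2_{\hat C}$ to be $C^2$ on the horizontal subbundle $T\hat B\times 0_F$ (Remark~\ref{veryimportantremark}), which would invalidate a naive first variation argument. The dichotomy provided by Theorem~\ref{fundamentaltheorem} circumvents this: either $\gamma$ is purely horizontal and the statement reduces to the smooth Riemannian computation on $B$, or $\dot\gamma(t)$ avoids the singular locus for every $t$, in which case the standard Lagrangian calculus applies uniformly along the whole variation. The remaining task, that the boundary correction $\xi(s)\phi(t)$ only perturbs $\mathcal{A}(\gamma_s)$ at order $o(s)$, is routine since $\phi$ and $\dot\phi$ are bounded and $\xi(s)=o(s)$.
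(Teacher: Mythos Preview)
Your approach via first variation of the action functional is the same as the paper's: bound $c(\sigma(s),(q,y))$ above by the action of an explicit competitor curve, then use the first variation formula to extract the boundary term at $t=0$. The paper is simply terser, writing down the first variation identity directly and deferring details to \cite[Proposition 10.15]{viltot}; your construction of the variation and the integration by parts against Euler--Lagrange make the mechanism explicit.

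One point in your dichotomy needs correction. In the horizontal case you write that ``the statement then reduces to the classical McCann supergradient computation on a Riemannian manifold.'' This is not quite right: although $\gamma$ lies in the totally geodesic leaf $B\times\{x\}$, the function $c(\cdot,(q,x))$ lives on $\hat C$ and the supergradient inequality must hold for \emph{every} test direction $w\in T\hat C_{(p,x)}$, including those with nonzero $F$-component. The Riemannian result on $B$ covers only horizontal $w$. The repair is immediate and already contained in your own argument: the explicit variation you give for the regular case works verbatim here, since the first variation expansion only requires $\mathcal{F}^2_{\hat C}$ to be $C^1$ (which it is on all of $T\hat C$, cf.\ Remark~\ref{veryimportantremark}), and along the horizontal geodesic the covector field $t\mapsto d_v\mathcal{F}^2|_{\dot\gamma(t)}=(2g_B(\dot\alpha(t),\cdot),0)$ is smooth in $t$ and satisfies Euler--Lagrange (the $B$-component is the Riemannian geodesic equation for $\alpha$, and both sides vanish in the $F$-component since $\mathcal{F}_F^2$ and $d_v\mathcal{F}_F^2$ vanish at $0_F$). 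So you can simply drop the separate horizontal reduction and run the variation argument uniformly.
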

\begin{proof}
Let $(\tilde{p},\tilde{x})$ and $(\tilde{q},\tilde{y})$ are points that are very close to $(p,x)$ and $(q,y)$ such that there are unique geodesics $\sigma_0, \sigma_1:[0,1]\rightarrow \hat{C}$
between $(p,x)$ and $(\tilde{p},\tilde{x})$ and between $(q,y)$ and $(\tilde{q},\tilde{y})$, respectively. Let $\tilde{\gamma}$ be an arbitrary curve that connects $(\tilde{p},\tilde{x})$ 
and $(\tilde{q},\tilde{y})$. Then we have by the formula of first variation
\begin{align*}
\int_0^1\mathcal{F}^2_C(\dot{\tilde{\gamma}}(t))dt&=\int_0^1\mathcal{F}^2_C(\dot{\gamma}(t))dt+d_v\mathcal{F}^2_C|_{\dot{\gamma}(1)}(\dot{\sigma}_1(0))-d_v\mathcal{F}^2_C|_{\dot{\gamma}(0)}(\dot{\sigma}_0(0))+o(\sup_{\scriptscriptstyle{t\in[0,1]}}|\gamma(t),\tilde{\gamma}(t)|).
\end{align*}
Hence we can proof for some $\tilde{\gamma}$ with $(\tilde{q},\tilde{y})=(q,y)$ that
\begin{align*}
c((\tilde{p},\tilde{x}),(q,y))\leq \int_0^1\mathcal{F}^2_C(\dot{\tilde{\gamma}}(t))dt\leq c((p,x),(q,y))-d_v\mathcal{F}^2_C|_{\dot{\gamma}(0)}(\dot{\sigma}_0(0))+o(|(p,x),(\tilde{p},\tilde{x})|)
\end{align*}
which means that $c(\cdot,(q,y))$ has supergradient $-d_v\mathcal{F}^2_C|_{\dot{\gamma}(0)}$. For more details we refer to \cite[Proposition 10.15]{viltot}.
\end{proof}
\begin{lemma}\label{cconcave}
Let $X,Y\subset \hat{C}$ be two compact subsets and $\phi:X\rightarrow \mathbb{R}$ be a $c$-concave function.
If $\phi$ is differentiable in $(p,x)\in X$, 
and 
\begin{align}\label{cc}c((p,x),(q,y))=\phi((p,x))+\phi^c((q,y)),\end{align}
then there is a geodesic $\gamma=\gamma^{(p,x)}$ between $(p,x)$ and $(q,y)$ satisfying $-d\phi_{(p,x)}(\dot{\gamma}(0))=\mathcal{F}^2(\dot{\gamma}(0))$. The point $(q,y)$ and the geodesic $\gamma$ are uniquely determined by $(p,x)$ and $\phi$.
%
%
%
%
\end{lemma}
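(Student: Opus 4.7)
The plan is to reduce the assertion to a first-order analysis at a minimum. By definition of the $c$-transform one has $\phi^c((q,y)) \le c(w,(q,y)) - \phi(w)$ for every $w \in X$, and the hypothesis (\ref{cc}) forces equality at $w = (p,x)$. Hence the function $w \mapsto c(w,(q,y)) - \phi(w)$ attains its minimum over $X$ at the point $(p,x)$.

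Next I would fix a constant-speed geodesic $\gamma:[0,1]\to \hat C$ joining $(p,x)$ and $(q,y)$ (one exists since $c((p,x),(q,y))$ is finite and one may work in the regular locus) and apply the previous proposition to obtain a supergradient $\alpha = -\tfrac{1}{2}d_v\mathcal{F}^2_{\hat C}|_{\dot\gamma(0)} \in T^*\hat C_{(p,x)}$ of $c(\cdot,(q,y))$ at $(p,x)$. Combining the supergradient estimate, the minimum property above, and the differentiability of $\phi$ at $(p,x)$ yields, for any geodesic $\sigma$ starting at $(p,x)$ with $\dot\sigma(0)=w$,
\[
t\, d\phi_{(p,x)}(w) + o(t) \;\le\; c(\sigma(t),(q,y)) - c((p,x),(q,y)) \;\le\; t\,\alpha(w) + o(t).
\]
Dividing by $t>0$, sending $t\downarrow 0$, and repeating with $-w$ forces $d\phi_{(p,x)} = \alpha$ as covectors in $T^*\hat C_{(p,x)}$. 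Evaluating at $\dot\gamma(0)$ and invoking Euler's identity for the $2$-homogeneous function $\mathcal{F}^2$ then gives the stated identity $-d\phi_{(p,x)}(\dot\gamma(0)) = \mathcal{F}^2(\dot\gamma(0))$.

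For uniqueness I would use that on the smooth locus of $\mathcal{F}^2_{\hat C}$ the strong convexity in Definition \ref{finslermanifold} makes the fiberwise Legendre map $v\mapsto \tfrac{1}{2}d_v\mathcal{F}^2|_v$ a homeomorphism between $T\hat C_{(p,x)}$ and $T^*\hat C_{(p,x)}$, so the equation $-d\phi_{(p,x)} = \tfrac{1}{2}d_v\mathcal{F}^2|_{\dot\gamma(0)}$ determines $\dot\gamma(0)$ uniquely. Proposition \ref{geod} then gives uniqueness of the geodesic $\gamma$, and hence of the endpoint $(q,y)=\gamma(1)$.

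The principal obstacle I anticipate is the failure of $\mathcal{F}^2_{\hat C}$ to be $C^2$ along $T\hat B\times 0_F$, where the Legendre inversion above is not directly available. This forces a separate treatment when $d\phi_{(p,x)}$ annihilates the vertical subspace $TF_x$: in that case $\dot\gamma(0)$ must have vanishing $F$-component, so $\gamma$ reduces to a Riemannian geodesic in the base $\hat B$ and uniqueness is recovered from the ordinary Riemannian Legendre inversion on $\hat B$. Otherwise $\dot\gamma(0)$ lies in the smooth locus $T\hat B\times(TF\setminus 0_F)$ and the preceding arguments apply verbatim.
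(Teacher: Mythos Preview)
Your proof is correct and follows essentially the same route as the paper: both arguments show that the minimality of $w\mapsto c(w,(q,y))-\phi(w)$ at $(p,x)$ forces $d\phi_{(p,x)}$ to be a subgradient of $c(\cdot,(q,y))$, match it with the supergradient $-d_v\mathcal{F}^2_{\hat C}|_{\dot\gamma(0)}$ from the preceding proposition, and then invoke Proposition~\ref{geod}. One minor simplification: your separate treatment of the locus $T\hat B\times 0_F$ is not needed, since the paper observes that $\mathcal{F}^2_{\hat C}$ is globally $C^1$ and strictly convex in $v$ (the defect is only at the $C^2$ level), and this alone already makes $v\mapsto d_v\mathcal{F}^2|_v$ injective on the entire tangent fiber.
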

\begin{proof}
By definition of $c$-concave functions we have $\geq$ in (\ref{cc}) for any pair of points. Now choose $(p,x)$ and $(q,y)$ such that (\ref{cc}) holds and $\phi$ is differentiable at $(p,x)$. 
Then we have for any $(\tilde{p},\tilde{x})$
\begin{align*}
\phi((\tilde{p},\tilde{x}))-\phi((p,x))\leq c((\tilde{p},\tilde{x},(q,y))-c((p,x),(q,y))
\end{align*}
Instead of the point $(\tilde{p},\tilde{x})$ we insert a curve $\sigma:(0,\epsilon)\rightarrow X$ (parametrized by arclength). Then we deduce 
\begin{align*}
d\phi_{(p,x)}(\dot{\sigma})=\frac{d}{d\epsilon}\phi\circ\sigma|_{\epsilon=0}\leq \liminf_{\epsilon\rightarrow 0}\frac{c(\sigma(\epsilon),(q,y))-c((p,x),(q,y))}{\epsilon}
\end{align*}
It follows that $d\phi_{(p,x)}$ is a subgradient of $c(\cdot,(q,y))$ at $(p,x)$. But by the previous proposition $c(\cdot,(q,y))$ has also a supergradient at $(p,x)$. Thus it is differentiable at $(p,x)$ with
\begin{align}\label{dphi}
 -d_v\mathcal{F}^2_{\hat{C}}|_{\dot{\gamma}(0)}=dc(\cdot,(q,y))_{(p,x)}=d\phi_{(p,x)}
\end{align}
where $\gamma$ is some geodesic that connects $(p,x)$ and $(q,y)$. Now we know that $\mathcal{F}^2_{C}$ is strictly convex in $v$ and $C^1$. Thus the co-vector $d_v\mathcal{F}^2_{\hat{C}}|_{\dot{\gamma}(0)}$ determines
$\dot{\gamma}(0)$ uniquely by $d_v\mathcal{F}^2_{\hat{C}}|_{\dot{\gamma}(0)}(w)=\mathcal{F}^2(w)$ and therefore $\gamma$ by Proposition \ref{geod}. So (\ref{dphi}) and the strict convexity of $\mathcal{F}^2_C$ 
with respect to $v$ determines $y$ uniquely.
\end{proof}
\begin{remark}
On $T\hat{B}\oplus TF\backslash 0_F$ we have that $\dot{\gamma}^{(p,x)}$ coincides with the gradient of $-\phi$ at $(p,x)$, that can be defined via Legendre transformation, and $\gamma^{(p,x)}(t)=\exp(-t\nabla\phi_{(p,x)})$.
On $T\hat{B}\oplus 0_F$ it coincides with the gradient that comes from the Riemannian structure on $B$.
 The map $(p,x)\mapsto \dot{\gamma}^{(p,x)}$ is measurable
because $\phi$ is Lipschitz (see Remark \ref{mb}) and the transformation $\alpha\in T^*C_{\scriptscriptstyle{(x,p)}}\mapsto \alpha^*\in TC_{\scriptscriptstyle{(p,x)}}$ is continuous 
where $\alpha^*$ is uniquely determined by $\alpha(\alpha^*)=\mathcal{F}^2(\alpha^*)$. 
Now one can deduce that also $(p,x)\mapsto\gamma^{\scriptscriptstyle{(p,x)}}(1)$ is measurable by considering the ``exponential map'' separately on $T\hat{B}\oplus TF\backslash 0_F$ and $T\hat{B}\oplus 0_F$.
In \cite[Theorem 10.28]{viltot} measurability of $T$ is deduced by applying a measurable selection theorem. 
\end{remark}
\begin{proof}[Proof of Theorem \ref{optimalmap}]
Let $\pi$ be an optimal transference plan. By Kantorovich duality there exists a $c$-concave function $\phi$ such that $\phi((p,x))+\phi^c((q,y))\leq c((p,x),(q,y))$ everywhere on $\supp\pi\subset X\times Y$, 
with equality $\pi$-almost surely. Since $\phi$ is differentiable $\m_C$-almost surely and since $\mu$ is absolutely continuous with respect to $\m_C$, we can define $T$ by Lemma \ref{cconcave}
$\m_C$-almost surely by $T((p,x))=\gamma^{(p,x)}(1)$ where $\gamma^{(p,x)}$ is uniquely given by $-d\phi_{(p,x)}(\dot{\gamma}(0))=\mathcal{F}^2(\dot{\gamma}(0))$. 
Thus $\pi$ is concentrated on the graph of $T$, or equivalently $\pi=(Id_{\hat{C}}, T)_*\mu$. Now from \cite[Lemma 4.9]{ohtafinsler1} 
we know that in our setting $-d\phi$ is unique among all maximizers $(\phi,\phi^c)$ of Kantorovich duality as long as the initial measure is absolutely continuous. So also $T$ and $\pi$ are unique.
\end{proof}

\bibliography{new}

\begin{thebibliography}{10}

\bibitem{albi}
S.~B. Alexander and R.~L. Bishop.
\newblock Curvature bounds for warped products of metric spaces.
\newblock {\em Geom. Funct. Anal.}, 14(6):1143--1181, 2004.

\bibitem{albi0}
Stephanie~B. Alexander and Richard~L. Bishop.
\newblock Warped products of {H}adamard spaces.
\newblock {\em Manuscripta Math.}, 96(4):487--505, 1998.

\bibitem{bastco}
Kathrin Bacher and Karl-Theodor Sturm.
\newblock Ricci bounds for euclidean and spherical cones.
\newblock {\em preprint, http://arxiv.org/pdf/1103.0197v1}.

\bibitem{bast}
Kathrin Bacher and Karl-Theodor Sturm.
\newblock Localization and tensorization properties of the curvature-dimension
  condition for metric measure spaces.
\newblock {\em J. Funct. Anal.}, 259(1):28--56, 2010.

\bibitem{bcs}
D.~Bao, S.-S. Chern, and Z.~Shen.
\newblock {\em An introduction to {R}iemann-{F}insler geometry}, volume 200 of
  {\em Graduate Texts in Mathematics}.
\newblock Springer-Verlag, New York, 2000.

\bibitem{bbi}
Dmitri Burago, Yuri Burago, and Sergei Ivanov.
\newblock {\em A course in metric geometry}, volume~33 of {\em Graduate Studies
  in Mathematics}.
\newblock American Mathematical Society, Providence, RI, 2001.

\bibitem{bgp}
Yu. Burago, M.~Gromov, and G.~Perelman.
\newblock A. {D}. {A}leksandrov spaces with curvatures bounded below.
\newblock {\em Uspekhi Mat. Nauk}, 47(2(284)):3--51, 222, 1992.

\bibitem{chaeig}
Isaac Chavel.
\newblock {\em Eigenvalues in {R}iemannian geometry}, volume 115 of {\em Pure
  and Applied Mathematics}.
\newblock Academic Press Inc., Orlando, FL, 1984.
\newblock Including a chapter by Burton Randol, With an appendix by Jozef
  Dodziuk.

\bibitem{CMS}
Dario Cordero-Erausquin, Robert~J. McCann, and Michael Schmuckenschl{\"a}ger.
\newblock A {R}iemannian interpolation inequality \`a la {B}orell, {B}rascamp
  and {L}ieb.
\newblock {\em Invent. Math.}, 146(2):219--257, 2001.

\bibitem{deng}
Qintao Deng and Karl-Theodor Sturm.
\newblock Localization and tensorization properties of the curvature-dimension
  condition for metric measure spaces, {II}.
\newblock {\em J. Funct. Anal.}, 260(12):3718--3725, 2011.

\bibitem{sanchez}
Miguel~Angel Javaloyes and Miguel S\'anchez.
\newblock On the definition and examples of finsler metrics.
\newblock {\em http://arxiv.org/abs/1111.5066}.

\bibitem{radu}
L{\'a}szl{\'o} Kozma, Radu Peter, and Csaba Varga.
\newblock Warped product of {F}insler manifolds.
\newblock {\em Ann. Univ. Sci. Budapest. E\"otv\"os Sect. Math.}, 44:157--170
  (2002), 2001.

\bibitem{lobaem}
John Lott.
\newblock Some geometric properties of the {B}akry-\'{E}mery-{R}icci tensor.
\newblock {\em Comment. Math. Helv.}, 78(4):865--883, 2003.

\bibitem{lottvillani}
John Lott and C{\'e}dric Villani.
\newblock Ricci curvature for metric-measure spaces via optimal transport.
\newblock {\em Ann. of Math. (2)}, 169(3):903--991, 2009.

\bibitem{mccann}
Robert~J. McCann.
\newblock Polar factorization of maps on {R}iemannian manifolds.
\newblock {\em Geom. Funct. Anal.}, 11(3):589--608, 2001.

\bibitem{ohtmea}
Shin-ichi Ohta.
\newblock On the measure contraction property of metric measure spaces.
\newblock {\em Comment. Math. Helv.}, 82(4):805--828, 2007.

\bibitem{ohtpro}
Shin-Ichi Ohta.
\newblock Products, cones, and suspensions of spaces with the measure
  contraction property.
\newblock {\em J. Lond. Math. Soc. (2)}, 76(1):225--236, 2007.

\bibitem{ohtafinsler1}
Shin-ichi Ohta.
\newblock Finsler interpolation inequalities.
\newblock {\em Calc. Var. Partial Differential Equations}, 36(2):211--249,
  2009.

\bibitem{on}
Barrett O'Neill.
\newblock {\em Semi-{R}iemannian geometry (With applications to relativity)},
  volume 103 of {\em Pure and Applied Mathematics}.
\newblock Academic Press Inc. [Harcourt Brace Jovanovich Publishers], New York,
  1983.

\bibitem{p}
G.~Perelman.
\newblock A.{D}. {A}lexandrov's spaces with curvatures bounded from below,
  {II}.
\newblock {\em
  http://www.math.psu.edu/petrunin/papers/alexandrov/perelmanASWCBFB2+.pdf}.

\bibitem{pepe}
G.~Perelman and A.~Petrunin.
\newblock Quasigeodesics and gradient curves in {A}lexandrov spaces.
\newblock {\em preprint}.

\bibitem{petsem}
Anton Petrunin.
\newblock Semiconcave functions in {A}lexandrov's geometry.
\newblock In {\em Surveys in differential geometry. {V}ol. {XI}}, volume~11 of
  {\em Surv. Differ. Geom.}, pages 137--201. Int. Press, Somerville, MA, 2007.

\bibitem{rajala2}
Tapio Rajala.
\newblock Interpolated measures with bounded density in metric spaces
  satisfying the curvature-dimension conditions of {S}turm.
\newblock {\em preprint, http://arxiv.org/pdf/1111.5526}.

\bibitem{shen}
Zhongmin Shen.
\newblock {\em Lectures on {F}insler geometry}.
\newblock World Scientific Publishing Co., Singapore, 2001.

\bibitem{convexfunctionals}
Karl-Theodor Sturm.
\newblock Convex functionals of probability measures and nonlinear diffusions
  on manifolds.
\newblock {\em J. Math. Pures Appl. (9)}, 84(2):149--168, 2005.

\bibitem{stugeo1}
Karl-Theodor Sturm.
\newblock On the geometry of metric measure spaces. {I}.
\newblock {\em Acta Math.}, 196(1):65--131, 2006.

\bibitem{stugeo2}
Karl-Theodor Sturm.
\newblock On the geometry of metric measure spaces. {II}.
\newblock {\em Acta Math.}, 196(1):133--177, 2006.

\bibitem{viltot}
C{\'e}dric Villani.
\newblock {\em Optimal transport, Old and new}, volume 338 of {\em Grundlehren
  der Mathematischen Wissenschaften [Fundamental Principles of Mathematical
  Sciences]}.
\newblock Springer-Verlag, Berlin, 2009.

\end{thebibliography}
\end{document}